\documentclass[11pt,letter]{article}
\usepackage{titling}
\usepackage{amsfonts,amssymb,enumitem}
\usepackage{amsmath,amsthm,color}
\usepackage[utf8]{inputenc}
\usepackage[T1]{fontenc}
\usepackage{xspace}
\usepackage{cite}
\usepackage[a4paper, top=2cm, bottom=2cm, left=2.5cm, right=2.5cm, includefoot]{geometry} 
\usepackage{xcolor}
\usepackage{hyperref}
\usepackage{todonotes}
\usepackage{mathtools}
\hypersetup{
	colorlinks=true,
    linkcolor={red!50!black},
    citecolor={blue!50!black},
    bookmarksopen=true,
	  bookmarksnumbered,
	  bookmarksopenlevel=2,
	  bookmarksdepth=3
}
\usepackage{listings}
\lstset{
  basicstyle=\ttfamily,
  mathescape
}
\usepackage{thmtools,thm-restate}

\usepackage[capitalise]{cleveref}
\crefname{conjecture}{Conjecture}{Conjectures}

\newcommand\extrafootertext[1]{%
    \bgroup
    \renewcommand\thefootnote{\fnsymbol{footnote}}%
    \renewcommand\thempfootnote{\fnsymbol{mpfootnote}}%
    \footnotetext[0]{#1}%
    \egroup
}

\newcommand{\cmsotwo}{$\mathsf{CMSO}_2$\xspace}
\newcommand{\msotwo}{$\mathsf{MSO}_2$\xspace}

\newcommand{\tw}{\mathsf{tw}}
\newcommand{\simw}{\mathsf{simw}}

\newcommand{\ibn}{\mathsf{ibn}}
\newcommand{\tin}{\mathsf{tree} \textnormal{-} \alpha}

\newcommand{\NP}{\textsf{NP}}

\newtheorem{theorem}{Theorem}[section]
\newtheorem{conjecture}[theorem]{Conjecture}

\newtheorem{corollary}[theorem]{Corollary}
\newtheorem{proposition}[theorem]{Proposition}
\newtheorem{lemma}[theorem]{Lemma}
\newtheorem{observation}[theorem]{Observation}

\theoremstyle{definition}

\newtheorem{question}[theorem]{Question}

\usepackage{tikz}
\usepackage{subcaption}

\colorlet{myGreen}{green!50!black}
\colorlet{myLightgreen}{green}
\colorlet{myRed}{red!90!black}
\definecolor{myBlue}{rgb}{0.25, 0.0, 1.0}
\definecolor{myLightBlue}{rgb}{0.39, 0.58, 0.93}
\colorlet{myViolet}{myBlue!55!myRed}
\definecolor{myOrange}{rgb}{1.0, 0.66, 0.07}

\definecolor{CornflowerBlue}{rgb}{0.39, 0.58, 0.93}
\definecolor{DarkGoldenrod}{rgb}{0.72, 0.53, 0.04}
\definecolor{BritishRacingGreen}{rgb}{0.0, 0.26, 0.15}
\definecolor{DarkMagenta}{rgb}{0.55, 0.0, 0.55}
\definecolor{AO}{rgb}{0.0, 0.5, 0.0}
\definecolor{BostonUniversityRed}{rgb}{0.8, 0.0, 0.0}
\definecolor{myRed}{rgb}{0.8, 0.0, 0.0}
\definecolor{DarkMidnightBlue}{rgb}{0.0, 0.2, 0.4}
\definecolor{DarkTangerine}{rgb}{1.0, 0.66, 0.07}
\definecolor{AppleGreen}{rgb}{0.55, 0.71, 0.0}
\definecolor{BrightUbe}{rgb}{0.82, 0.62, 0.91}
\definecolor{Amethyst}{rgb}{0.6, 0.4, 0.8}
\definecolor{DarkGray}{rgb}{0.52, 0.52, 0.51}
\definecolor{Gray}{rgb}{0.66, 0.66, 0.66}
\definecolor{BananaYellow}{rgb}{1.0, 0.88, 0.21}
\definecolor{Amber}{rgb}{1.0, 0.75, 0.0}
\definecolor{LightGray}{rgb}{0.83, 0.83, 0.83}
\definecolor{PrincetonOrange}{rgb}{1.0, 0.56, 0.0}
\definecolor{DeepCarrotOrange}{rgb}{0.91, 0.41, 0.17}
\definecolor{MidnightBlue}{rgb}{0.1, 0.1, 0.44}
\definecolor{HotMagenta}{rgb}{1.0, 0.11, 0.81}
\definecolor{Iceberg}{rgb}{0.44, 0.65, 0.82}
\definecolor{LavenderMagenta}{rgb}{0.93, 0.51, 0.93}
\definecolor{ChromeYellow}{rgb}{1.0, 0.65, 0.0}

\usetikzlibrary{calc}
\usetikzlibrary{fit}
\usetikzlibrary{decorations}
\usetikzlibrary{decorations.pathmorphing}
\usetikzlibrary{decorations.text}
\usetikzlibrary{external}
\usetikzlibrary{shapes,hobby}

\tikzset{
	position/.style args={#1:#2 from #3}{
		at=($(#3)+(#1:#2)$)
	}
}

\tikzset{
  v:main/.style = {draw, circle, scale=0.8, thick,fill=black,inner sep=0.7mm},
  v:mainempty/.style = {draw, circle, scale=0.8, thick,fill=white,inner sep=0.7mm},
  v:middle/.style = {draw, circle, scale=0.3,thick,fill=Gray,color=Gray,inner sep=1mm},
  v:border/.style = {draw, circle, scale=0.75, thick,minimum size=10.5mm},
  v:mainfull/.style = {draw, circle, scale=1, thick,fill},
  v:ghost/.style = {inner sep=0pt,scale=1},
  >={latex},
  e:shiftedright/.style = {decoration={sl, raise=0.65pt},  decorate},
  e:shiftedleft/.style  = {decoration={sl, raise=-0.65pt}, decorate},
  e:marker/.style = {line width=8.5pt,line cap=round,opacity=0.35,color=DarkGoldenrod},
  e:colored/.style = {line width=1.8pt,color=BostonUniversityRed,cap=round,opacity=0.8},
  e:coloredthin/.style = {line width=1.6pt,opacity=1},
  e:coloredborder/.style = {line width=3.4pt},
  e:main/.style = {line width=1pt},
  e:thick/.style = {line width=2pt},
  e:mainthin/.style = {line width=0.6pt},
}

\usepackage{authblk}
\usepackage{microtype}

\title{Treewidth versus clique number. IV. Tree-independence number of graphs excluding an induced star}
\predate{}
\date{}
\postdate{}

\author[1]{Clément Dallard}
\author[2,3]{Matjaž Krnc}
\author[4,5]{O-joung Kwon}
\author[2]{Martin Milanič}
\author[6]{Andrea Munaro}
\author[2,3]{Kenny Štorgel}
\author[5]{Sebastian Wiederrecht}

\newcommand{\email}[1]{%
  \texttt{#1}%
}

\affil[1]{Department of Informatics, University of Fribourg, Switzerland}
\affil[2]{FAMNIT and IAM, University of Primorska, Koper, Slovenia}
\affil[3]{Faculty of Information Studies, Novo mesto, Slovenia}
\affil[4]{Department of Mathematics, Hanyang University, Seoul, South Korea}
\affil[5]{Discrete Mathematics Group, Institute for Basic Science, Daejeon, South Korea}
\affil[6]{Department of Mathematical, Physical and Computer Sciences, University of Parma, Italy}

\begin{document}

\maketitle
\extrafootertext{\scriptsize Emails: 
\email{clement.dallard@unifr.ch},
\email{matjaz.krnc@upr.si},
\email{ojoungkwon@hanyang.ac.kr},
\email{martin.milanic@upr.si}, 
\email{andrea.munaro@unipr.it}, 
\email{kennystorgel.research@gmail.com}, %
\email{sebastian.wiederrecht@gmail.com}.  %
}

\thispagestyle{empty}

\begin{abstract}
\noindent 
Many recent works address the question of characterizing induced obstructions to bounded treewidth.
In 2022, Lozin and Razgon completely answered this question for graph classes defined by finitely many forbidden induced subgraphs.
Their result also implies a characterization of graph classes defined by finitely many forbidden induced subgraphs that are $(\tw,\omega)$-bounded, that is, treewidth can only be large due to the presence of a large clique.
This condition is known to be satisfied for any graph class with bounded tree-independence number, a graph parameter introduced independently by Yolov in 2018 and by Dallard, Milani{\v c}, and {\v S}torgel in 2024.
Dallard et al.\ conjectured that $(\tw,\omega)$-boundedness is actually equivalent to bounded tree-independence number.
We address this conjecture in the context of graph classes defined by finitely many forbidden induced subgraphs and prove it for the case of graph classes excluding an induced star.
We also prove it for subclasses of the class of line graphs, determine the exact values of the tree-independence numbers of line graphs of complete graphs and line graphs of complete bipartite graphs, and characterize the tree-independence number of $P_4$-free graphs, which implies a linear-time algorithm for its computation.
Applying the algorithmic framework provided in a previous paper of the series leads to polynomial-time algorithms for the Maximum Weight Independent Set problem in an infinite family of graph classes.

\medskip
\noindent{\bf Keywords:} tree-independence number, tree decomposition, treewidth, hereditary graph class, line graph, $P_4$-free graph, cograph

\medskip
\noindent{\bf MSC Classes (2020):} 
05C75, %
05C76, %
05C85 %
\end{abstract}

\clearpage
\section{Introduction}

\subsection{Background}

Treewidth is a well-studied and important graph parameter.
Besides playing a crucial role in Graph Minor Theory by Robertson and Seymour (see, e.g.,~\cite{MR2188176}), it is also of significant algorithmic importance.
In particular, Courcelle's theorem~\cite{MR1042649} 
asserts that in any class of graphs with bounded treewidth, any decision problem expressible in {\msotwo} logic can be solved in linear time.\footnote{The result assumes that the graph is equipped with a tree decomposition of bounded width. 
As shown by Bodlaender~\cite{MR1417901}, such a tree decomposition can be computed in linear time.}
An extension of this result to optimization problems was given by Arnborg, Lagergren, and Seese~\cite{MR1105479}.
While these metatheorems are very general with respect to the problem space, their applicability with respect to graph classes is limited to classes of graphs that are sparse, in the sense that they can only have a linear number of edges.
For example, while complete graphs have arguably a very simple structure, they have unbounded treewidth, hence, they are not captured by Courcelle's theorem.

There have been several ways to address this issue in the literature. 
Numerous graph width parameters generalizing treewidth were proposed that can also capture dense graph classes.
This includes clique-width~\cite{MR1743732} (and closely related parameters rank-width~\cite{MR2232389} and Boolean-width~\cite{MR2857670,MR3126918}),
mim-width~\cite{vatshelle2012new},
sim-width~\cite{MR3721445}, and twin-width~\cite{MR4402362}.
Each of these width parameters has some useful algorithmic features, which in some cases include metatheorems for problems expressible in certain logics weaker than {\msotwo} (see~\cite{DBLP:conf/soda/BergougnouxDJ23,MR4402362,MR3126917,MR3126918,MR1739644}).
A different approach, inherently related to treewidth, was recently proposed by Dallard, Milani{\v{c}}, and \v{S}torgel~\cite{MR4334541} who initiated a systematic study of $(\tw,\omega)$-bounded graph classes, that is, graph classes in which the treewidth can only be large due to the presence of a large clique.
More precisely, a graph class is said to be \emph{$(\tw,\omega)$-bounded} if it admits a \emph{$(\tw,\omega)$-binding} function, that is, a function $f$ such that for every graph $G$ in the class and every induced subgraph $H$ of $G$, the treewidth of $H$ is at most $f(\omega(H))$, where $\omega(H)$ denotes the clique number of $H$.
Interestingly, 
this purely structural restriction, even when imposed only on the graphs in the class (and not necessarily for their induced subgraphs), already implies some good algorithmic properties, such as linear-time fixed-parameter tractable algorithms for the \textsc{$k$-Clique} and \textsc{List $k$-Coloring} problems (see
~\cite{MR4332111}), as long as the function $f$ is computable; in some cases, it also leads to improved approximations for the \textsc{Maximum Clique} problem (see~\cite{MR4334541}).

The full extent of algorithmic potential of $(\tw,\omega)$-boundedness is not yet understood.
Yolov~\cite{DBLP:conf/soda/Yolov18} and Dallard et al.\ \cite{dallard2022firstpaper} independently introduced a graph width parameter that, when bounded, implies $(\tw,\omega)$-boundedness as well as polynomial-time solvability of several problems related to independent sets.
This parameter is called \emph{tree-independence number} and is denoted by $\tin(G)$.\footnote{Yolov called it \emph{$\alpha$-treewidth} in~\cite{DBLP:conf/soda/Yolov18}.}
It is defined similarly as treewidth, via tree decompositions, except that the measure of quality of a tree decomposition is changed; instead of measuring the maximum cardinality of a bag, what matters is the \emph{independence number} of the decomposition, defined as the maximum cardinality of an independent set contained in a bag.
For problems related to independent sets, this measure allows for the development of polynomial-time dynamic programming algorithms (see~\cite{DBLP:conf/soda/Yolov18,dallard2022firstpaper}).
If, for a fixed $k$, a graph $G$ with $\tin(G)\le k$ is given without a corresponding tree decomposition, then one can compute in polynomial time a tree decomposition of $G$ with independence number at most $8k$ (see~\cite{dallard2022computing}).
A metatheorem due to Milani{\v{c}} and Rz\k{a}\.zewski~\cite{milanic2022beyond} shows that, for any class of graphs with bounded tree-independence number and any fixed \cmsotwo property, the problem of finding a maximum-weight induced subgraph with bounded chromatic number and satisfying the property is solvable in polynomial time. 
Besides \textsc{Maximum Weight Independent Set}, this framework also captures the problems of computing a maximum-weight induced matching, a maximum-weight induced forest, a maximum-weight planar induced subgraph, and many others.

In summary, boundedness of tree-independence number gives a sufficient condition for \textsc{Maximum Independent Set} and many other problems to be solvable in polynomial time in a $(\tw,\omega)$-bounded graph class.
It is not yet known how restrictive boundedness of tree-independence number is compared to $(\tw,\omega)$-boundedness.
In fact, Dallard et al.\ conjectured that bounded tree-independence number is not only sufficient for $(\tw,\omega)$-boundedness but also necessary.

\begin{conjecture}[Dallard et al.\ \cite{dallard2022secondpaper}]\label{conj:main}
Let $\mathcal{G}$ be a hereditary graph class.
Then $\mathcal{G}$ is $(\tw,\omega)$-bounded if and only if $\mathcal{G}$ has bounded tree-independence number.
\end{conjecture}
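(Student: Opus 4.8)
The two implications are of very different difficulty, so I would treat them separately.

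The plan for the forward direction---bounded tree-independence number implies $(\tw,\omega)$-boundedness---is a one-line Ramsey argument, and this is the routine half. Suppose every graph in $\G$ satisfies $\tin(G)\le k$. Given an induced subgraph $H$ of some $G\in\G$, heredity yields $H\in\G$, so $H$ has a tree decomposition of independence number at most $k$. In each bag $X$ we then have $\alpha(H[X])\le k$ and $\omega(H[X])\le\omega(H)$, so Ramsey's theorem forces $|X|<R(k+1,\omega(H)+1)$; hence $\tw(H)\le R(k+1,\omega(H)+1)-2$ and $f(\omega)=R(k+1,\omega+1)$ is a $(\tw,\omega)$-binding function for $\G$. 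No further work is needed here.

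For the converse---$(\tw,\omega)$-boundedness implies bounded tree-independence number---I would argue by contraposition, using the reformulation that $\G$ is $(\tw,\omega)$-bounded precisely when, for every constant $c$, the subclass of graphs in $\G$ with clique number at most $c$ has bounded treewidth. So, assuming $\tin$ is unbounded on $\G$, the aim is to produce a single constant $c$ together with graphs in $\G$ of clique number at most $c$ and arbitrarily large treewidth. The crux would be a structural ``induced obstruction'' theorem for tree-independence number of the following shape: whenever $\tin(G)$ is large, $G$ contains, as an induced subgraph, a member of an explicit family $\mathcal{F}$ whose graphs all have bounded clique number yet unbounded treewidth---the natural candidates being large walls, line graphs of large walls, and layered-wheel-type configurations. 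Granting this, heredity places such a witness inside $\G$, its clique number stays bounded while its treewidth grows without bound, and $(\tw,\omega)$-boundedness fails.

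The hard part is exactly establishing this induced structural theorem in full generality: it would have to serve as an induced-subgraph analogue of the Grid-Minor Theorem for the parameter $\tin$, and no such result is currently known---which is why I would not expect to settle the conjecture in general, and why the realistic plan is to restrict to classes whose obstruction landscape is already mapped out. For classes defined by finitely many forbidden induced subgraphs the missing theorem can be bypassed: one invokes the Lozin--Razgon characterization of which such classes are $(\tw,\omega)$-bounded, reducing the task to a finite list of surviving families, and then for each of them constructs, essentially by hand, tree decompositions of bounded independence number. Executing this for the excluded star $K_{1,t}$ and for subclasses of line graphs---where the characterization leaves a tractable set of structures and explicit bounded-$\alpha$ decompositions can be engineered---is where I would concentrate the genuine combinatorial effort.
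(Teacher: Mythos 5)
This statement is a conjecture, and the paper does not prove it: its contributions are partial results (\cref{thm:main}, \cref{prop:line-graphs}, \cref{tin-of-P4-tree-graphs-original}), so there is no ``paper proof'' to compare against. Your forward direction is correct and is exactly the standard argument (bounded independence number in every bag plus Ramsey gives $|X| < R(k+1,\omega+1)$, hence a $(\tw,\omega)$-binding function; heredity handles induced subgraphs), and this implication is indeed already known from the earlier papers the authors cite. For the converse you correctly identify that the needed induced-obstruction theorem for $\tin$ is not available, and your fallback --- invoke the Lozin--Razgon characterization to reduce to the $\{K_{d,d},S,T\}$-free setting and then build bounded-independence decompositions by hand for the star and line-graph cases --- is precisely the route the paper takes for its partial results. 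In short: your proposal does not (and does not claim to) prove the conjecture, but it accurately maps out what is provable and matches the paper's actual strategy.
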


While the conjecture is still open, the following partial results are known.
\begin{itemize}
\item The conjecture holds for graph classes closed under the subgraph, topological minor, or minor relation.
Indeed, it follows from Robertson and Seymour's Grid-Minor Theorem (see~\cite{MR0854606}) that in any subgraph-closed graph class, the properties of $(\tw,\omega)$-boundedness, bounded tree-independence number, and bounded treewidth are equivalent (see~\cite[Remark 7.2]{dallard2022secondpaper}).

\item In~\cite{dallard2022secondpaper}, the equivalence between $(\tw,\omega)$-boundedness and bounded tree-independence number was established for graph classes excluding a single graph as an induced subgraph, induced topological minor, or induced minor.

\item Over a series of papers, Abrishami et al.\ recently studied induced obstructions to bounded treewidth (see~\cite{abrishami2022induced1,abrishami2022induced,MR4602718,MR4660624,MR4480493,MR4442958,abrishami2023induced1,abrishami2023induced2}).
In particular, they showed that the class of (even hole, diamond, pyramid)-free graphs is $(\tw,\omega)$-bounded (see~\cite{MR4602718}).
In line with the conjecture, Abrishami et al.\ showed in a follow-up work (see~\cite{abrishami2023tree}) that the class of (even hole, diamond, pyramid)-free graphs has bounded tree-independence number.

\item As shown by Brettell et al.~\cite{brettell2023comparing}, the conjecture holds for classes of bounded mim-width. This follows from the fact that every $(\tw, \omega)$-bounded graph class is $K_{d,d}$-free, for some $d \in \mathbb{N}$, and that $K_{d,d}$-free graphs of bounded mim-width have bounded tree-independence number.
\end{itemize}

\subsection{Our focus}

In~\cite{dallard2022secondpaper} the authors asked whether \cref{conj:main} holds when restricted to graph classes defined by finitely many forbidden induced subgraphs.
This interesting variant of the conjecture can be stated rather explicitly using a result of Lozin and Razgon~\cite{MR4385180} characterizing bounded treewidth within such graph classes.
The result states that a graph class defined by finitely many forbidden induced subgraphs has bounded treewidth if and only if it excludes at least one graph from each of the following four families:
complete graphs, complete bipartite graphs, $\mathcal{S}$, and $L(\mathcal{S})$, where $\mathcal{S}$ is the family of graphs every component of which is a tree with at most three leaves, and $L(\mathcal{S})$ is the family of all line graphs of graphs in $\mathcal{S}$.

This result immediately implies that a graph class defined by finitely many forbidden induced subgraphs is $(\tw,\omega)$-bounded if and only if it excludes at least one graph from each of the following three families:
complete bipartite graphs, $\mathcal{S}$, and $L(\mathcal{S})$.
Thus, when restricted to graph classes defined by finitely many forbidden induced subgraphs, \cref{conj:main} is equivalent to the following.

\begin{conjecture}\label{conjecture:finitely-many-fis}
For any positive integer $d$ and any two graphs $S\in \mathcal{S}$ and $T\in L(\mathcal{S})$, 
the class of $\{K_{d,d},S,T\}$-free graphs has bounded tree-independence number.
\end{conjecture}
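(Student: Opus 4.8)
The plan is to reduce the statement to a purely structural ``obstruction'' theorem and then attack that theorem by a separator-extraction argument. For the reduction I would invoke the balanced-separator characterization of tree-independence number underlying the approximation result of~\cite{dallard2022computing}: a hereditary class $\mathcal{G}$ has bounded $\tin$ if and only if there is a constant $c$ such that for every $G \in \mathcal{G}$ and every $W \subseteq V(G)$ there is a set $X \subseteq V(G)$ with $\alpha(G[X]) \le c$ whose removal leaves every connected component of $G-X$ with at most $|W|/2$ vertices of $W$. Thus it suffices to produce, in every $\{K_{d,d},S,T\}$-free graph, balanced separators of bounded independence number; equivalently, by the contrapositive and heredity, it suffices to prove the following: for every $d$ and every target ``complexity'' there is a threshold $N$ such that every $K_{d,d}$-free graph admitting no balanced separator of independence number less than $N$ contains, as an induced subgraph, a member of $\mathcal{S}$ or of $L(\mathcal{S})$ of at least that complexity. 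Here complexity records both the number of tripod (i.e.\ $\le 3$-leaf tree) components and the lengths of their legs, because a fixed $S \in \mathcal{S}$ embeds as an induced subgraph into any member of $\mathcal{S}$ with enough components and long enough legs (a long induced path contains any disjoint union of shorter induced paths, and many branch vertices yield many disjoint subdivided claws), and symmetrically for $T \in L(\mathcal{S})$.

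To prove the obstruction statement I would proceed as follows. A graph with no balanced separator of small independence number contains a large $\alpha$-well-linked set $A$, one that cannot be split in a balanced way by any vertex set of small independence number. The hypothesis of $K_{d,d}$-freeness then enters as the only density control: by a Kővári--Sós--Turán-type (extremal bipartite) argument it prevents the interfaces between parts of $A$ from being too dense, forcing the routing between parts of $A$ to use long, sparse connections rather than large bicliques. Iteratively rerouting and Ramsey-cleaning these connections, one extracts many induced paths whose pairwise interaction is controlled, and the aim is to assemble them into either a large induced forest of subdivided claws, i.e.\ a high-complexity member of $\mathcal{S}$, or a large induced ``line-graph linkage'', i.e.\ a high-complexity member of $L(\mathcal{S})$. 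Once a sufficiently large structure of either type is found, the fixed forbidden graphs $S$ and $T$ appear by the embedding facts above, contradicting $\{S,T\}$-freeness.

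The hard part --- and the reason the full conjecture remains open while the two cases proved in this paper do not --- is the dichotomy in the final step: forcing that large tree-independence number in a $K_{d,d}$-free graph produces \emph{specifically} an $\mathcal{S}$-type or an $L(\mathcal{S})$-type large induced subgraph. Morally, an $\mathcal{S}$-type obstruction arises when the connections between parts of the well-linked set remain induced paths (subdivisions), whereas an $L(\mathcal{S})$-type obstruction arises when these connections are mediated by cliques strung along a path; in a general $K_{d,d}$-free graph both phenomena can occur and interleave, and controlling their global interaction is exactly what is missing. This is also what the two proven cases sidestep: excluding an induced star $K_{1,t}$ forces every neighborhood to have independence number below $t$, so small-independence separators can be built locally and the delicate dichotomy is bypassed; and within (sub)classes of line graphs the genuinely branching members of $\mathcal{S}$ are excluded (they contain an induced claw and hence are not line graphs), so only the $L(\mathcal{S})$-type analysis, governed by the edge structure of the preimage graph, remains. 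I would therefore expect the crux of a full proof to be a clean dichotomy theorem separating ``subdivision-like'' from ``clique-path-like'' connectivity inside $K_{d,d}$-free graphs of large tree-independence number.
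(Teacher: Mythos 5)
You are attempting to prove \cref{conjecture:finitely-many-fis}, which the paper itself does not prove: it is stated as an open conjecture, and the paper establishes only weakenings of it --- \cref{thm:main}, in which $K_{d,d}$ is replaced by the star $K_{1,d}$, and the analogue for subclasses of line graphs (\cref{prop:line-graphs}). So there is no proof in the paper to match yours against; the only relevant question is whether your argument closes the gap the paper leaves open. It does not, and to your credit you say so yourself.

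Concretely, your proposal consists of two sound reductions and one missing core. The reductions are fine: (i) the balanced-separator characterization of bounded tree-independence number underlying~\cite{dallard2022computing} lets you restate the conjecture, over a hereditary class, as an obstruction theorem for $K_{d,d}$-free graphs admitting no balanced separator of small independence number; (ii) the observation that any fixed $S\in\mathcal{S}$ (resp.\ $T\in L(\mathcal{S})$) embeds as an induced subgraph into a disjoint union of enough sufficiently long subdivided claws (resp.\ their line graphs) --- this mirrors exactly how the paper reduces \cref{thm:main} to the case of $kS_p$ and $kT_p$ in \cref{cor:main}. But both steps merely rephrase the conjecture; they transfer the entire difficulty into the ``obstruction statement''. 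The core of your argument --- extracting, from an $\alpha$-well-linked set in a $K_{d,d}$-free graph, induced paths with controlled pairwise interaction by ``iterative rerouting and Ramsey-cleaning'', and then assembling them into a large induced member of $\mathcal{S}$ or of $L(\mathcal{S})$ --- is announced as an aim rather than carried out, and it is precisely the step at which all known techniques fail. Beyond bounded degree (Korhonen~\cite{MR4539481}) or an excluded induced star, where the paper can build separators of bounded independence number locally from closed neighborhoods of paths (cf.\ \cref{thm:excludepathoriginal,lemma:boundedalphapath}), no tool is available: in a $K_{d,d}$-free graph the neighborhood of a vertex can have arbitrarily large independence number, and your Kővári--Sós--Turán density control does not by itself yield induced structure. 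Your closing paragraph identifies this ``subdivision-like versus clique-path-like'' dichotomy as exactly what is missing; that diagnosis agrees with the paper's discussion of why only the $K_{1,d}$ and line-graph cases are within reach, but it also means your text is a research program, not a proof of the statement.
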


The result of Lozin and Razgon implies \cref{conjecture:finitely-many-fis} when restricted to classes of graphs with bounded maximum degree.
Another way to see this is to apply a result of Korhonen~\cite{MR4539481} who proved a conjecture of Aboulker, Adler, Kim, Sintiari, and Trotignon~\cite{MR4286665} (which was also explicitly mentioned in~\cite{MR4660624,MR4480493,MR4602718}) stating that any graph with sufficiently large treewidth and bounded maximum degree contains a large wall or the line graph of a large wall as an induced subgraph. 
 Moreover, a proof of some special cases of \cref{conjecture:finitely-many-fis} can be obtained by combining the aforementioned result of Brettell et al.~\cite[Theorem~6]{brettell2023comparing} with \cite[Theorem~30]{brettell2022mim-width}, which provides several pairs of graphs $S \in \mathcal{S}$ and $T \in L(\mathcal{S})$ such that the mim-width of $\{S, T\}$-free graphs is bounded.

Note that for any positive integer $s$, the $s$-vertex path $P_s$ belongs both to $\mathcal{S}$ as well as to $L(\mathcal{S})$.
The following special case of \cref{conjecture:finitely-many-fis} is already of interest.

\begin{conjecture}\label{conjecture:excluding-a-path}
For any two positive integers $d$ and $s$, the class of $\{K_{d,d},P_s\}$-free graphs has bounded tree-independence number.
\end{conjecture}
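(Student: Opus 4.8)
The plan is to reduce the statement to the production of \emph{balanced separators of bounded independence number} and then to organize the argument as an induction on the path length $s$. Concretely, I would first invoke the separator viewpoint underlying the tree decomposition algorithm of \cite{dallard2022computing}: if there is a constant $k=k(d,s)$ such that every induced subgraph $H$ of a $\{K_{d,d},P_s\}$-free graph admits, for every vertex weighting, a set $S\subseteq V(H)$ with $\alpha(H[S])\le k$ such that every connected component of $H-S$ carries at most half of the total weight, then $\tin$ is bounded by a function of $k$, and hence of $d$ and $s$. So it suffices to find balanced separators whose independence number (not size) is bounded. It is worth keeping in mind that, by the Lozin--Razgon characterization \cite{MR4385180}, the class is already $(\tw,\omega)$-bounded (it excludes $K_{d,d}$, and $P_s$ lies in both $\mathcal{S}$ and $L(\mathcal{S})$); thus treewidth is large only because of large cliques, and the whole task is to cut through the graph while keeping the independent sets inside the cut small, even though the separators may freely contain arbitrarily large cliques.

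For the separators I would exploit that $P_s$-free graphs have \emph{bounded radius}. Since shortest paths are induced, a connected $P_s$-free graph has diameter at most $s-2$, so a breadth-first search from any vertex yields at most $s-1$ layers $L_0,\dots,L_r$. Consecutive layers are the only adjacent ones, so each $L_i$ separates $\bigcup_{j<i}L_j$ from $\bigcup_{j>i}L_j$; taking the smallest $i$ at which the cumulative weight crosses one half makes $L_i$ a balanced separator. Balance therefore comes essentially for free, and the entire difficulty is concentrated in the fact that a single layer can contain a large independent set.

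To handle that, I would induct on $s$. The base case $s=4$ is $\{K_{d,d},P_4\}$-free graphs, i.e.\ cographs together with the excluded biclique; here the characterization of the tree-independence number of $P_4$-free graphs obtained in this paper gives a bound depending only on $d$, precisely because forbidding an induced $K_{d,d}$ caps the cograph structures (such as $K_{n,n}$) that drive $\tin$ up. For the inductive step I would aim at a peeling statement: there is $k=k(d,s)$ so that every $\{K_{d,d},P_s\}$-free graph admits a set $S$ with $\alpha(S)\le k$ whose deletion leaves only $P_{s-1}$-free components. Given such an $S$, one takes decompositions of the components (which are $\{K_{d,d},P_{s-1}\}$-free, so covered by induction), adds $S$ to every bag, and joins them under a common node whose bag is $S$; this yields a valid tree decomposition of the whole graph with $\tin$-cost at most $k_{s-1}+k$, so the bounds telescope. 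The necessity of the excluded $K_{d,d}$ is visible already here: without it the claim is false, as $K_{n,n}$ is $P_4$-free with $\tin$ growing in $n$ and admits no bounded-independence-number set hitting all its induced $P_3$'s.

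The main obstacle is exactly this bounded-independence-number extraction (equivalently, the bounded-$\alpha$ balanced separator), and I expect the naive candidates to fail. A separator assembled from boundedly many closed neighborhoods $N[v_1],\dots,N[v_c]$ can still have unbounded independence number, since a large independent set concentrated in a single $N[v_j]$ merely produces a large induced star, which violates neither $K_{d,d}$-freeness nor $P_s$-freeness. Hence the separator must be a carefully chosen proper subset of a layer, and proving that a bounded-$\alpha$ choice always works should require an extremal, Ramsey-type argument: if no set of bounded independence number meets all the relevant induced $P_{s-1}$'s, one must be able to assemble from the many ``independent'' obstructions either an induced $K_{d,d}$ or an induced $P_s$ — the very dichotomy that, by design, collapses once $K_{d,d}$ is permitted. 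Making this extraction quantitatively uniform across all layers and all $s$ is the step that keeps \cref{conjecture:excluding-a-path} open. A cleaner-looking alternative would be to prove that $\{K_{d,d},P_s\}$-free graphs have bounded mim-width and then apply the results of Brettell et al.~\cite{brettell2023comparing,brettell2022mim-width}; but since $P_s$-free graphs have unbounded mim-width in general, this route would itself demand showing that every high-mim-width $P_s$-free construction contains a large induced biclique, which I expect to be no easier than the separator argument above.
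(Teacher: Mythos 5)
The statement you are trying to prove is stated in the paper as a \emph{conjecture} (\cref{conjecture:excluding-a-path}), not a theorem: the paper offers no proof of it, and explicitly leaves it open, establishing only the special cases $s=4$ (\cref{tin-of-P4-tree-graphs-original}, via the cograph structure of $P_4$-free graphs) and the weakening in which $K_{d,d}$ is replaced by $K_{1,d}$ (\cref{thm:excludepathoriginal}, via a Gy\'arf\'as-style peeling of closed neighborhoods along a path). So there is no paper proof to compare yours against, and your proposal does not close the gap either.

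Concretely, the missing step in your argument is the one you yourself flag: the ``peeling statement'' that every $\{K_{d,d},P_s\}$-free graph contains a set $S$ with $\alpha(G[S])$ bounded by a function of $d$ and $s$ whose removal leaves only $P_{s-1}$-free components (equivalently, the existence of balanced separators of bounded independence number). Everything else in your outline is sound but easy --- the reduction from such separators to bounded $\tin$ via \cite{dallard2022computing}, the observation that BFS layers of a $P_s$-free graph give at most $s-1$ candidate separators, and the base case $s=4$ which matches \cref{tin-of-P4-tree-graphs} --- whereas the extraction of a bounded-$\alpha$ subset of a layer is precisely where the paper's own technique breaks down: the $K_{1,d}$ argument works because $\alpha(G[N[v]])\le d-1$ in a $K_{1,d}$-free graph, and this fails completely under mere $K_{d,d}$-freeness (a vertex may have an arbitrarily large independent set in its neighborhood, forming a large induced star). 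Since you neither prove the peeling lemma nor reduce it to a known result, and since the conjecture is in fact open, the proposal is a research plan rather than a proof.
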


Combining known results from the literature implies the validity of \cref{conjecture:excluding-a-path} for the case $s = 4$, with a bound that is exponential in $d$.
This is a consequence of the following four results from the literature: first, every $P_4$-free graph is distance-hereditary (see Bandelt and Mulder~\cite{MR0859310}); second, the rank-width of distance-hereditary graphs is at most $1$ (see Oum~\cite{MR2156341}); third, the mim-width of a graph is at most its rank-width (see Vatshelle~\cite{vatshelle2012new}); and fourth, the tree-independence number of a $K_{d,d}$-free graph $G$ with mim-width less than $k$ is less than $6(2^{d+k-1}+dk^{d+1})$ (see Brettell et al.~\cite[Theorem~6]{brettell2023comparing}).
Hence, if $d$ is a positive integer and $G$ is a $\{K_{d,d}, P_4\}$-free graph, then $\tin(G)<6(d+1)2^{d+1}$.

\subsection{Our results}

The aim of this paper is to provide further partial support for \cref{conj:main,conjecture:finitely-many-fis,conjecture:excluding-a-path}.
In particular, we consider \cref{conjecture:finitely-many-fis} and show that the assumption of bounded maximum degree can be relaxed to excluding a fixed induced star, that is, a complete bipartite graph $K_{1,d}$, where $d$ is a fixed positive integer.
In other words, we prove the following weakening of \cref{conjecture:finitely-many-fis}.

\begin{restatable}{theorem}{restateMain}\label{thm:main}
For any positive integer $d$ and any two graphs $S\in \mathcal{S}$ and $T\in L(\mathcal{S})$, 
the class of $\{K_{1,d},S,T\}$-free graphs has bounded tree-independence number.
\end{restatable}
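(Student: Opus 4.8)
The plan is to split the analysis into the large-degree (equivalently, large-clique) regime and the bounded-degree regime, and to use the excluded star $K_{1,d}$ to pass from the former to the latter. The only local consequence of $K_{1,d}$-freeness that the argument needs is that for every vertex $v$ the neighborhood $N(v)$ contains no independent set of size $d$, i.e.\ $\alpha(G[N(v)])\le d-1$. By Ramsey's theorem this gives a threshold $R(d,r)$ such that whenever $\deg_G(v)\ge R(d,r)$ the set $N(v)$ contains a clique of size $r$: within our class, every vertex of large degree is forced to lie in a large clique. It is important to note that one cannot hope to bound $\omega(G)$ — a single large clique already lies in the class and has $\tin$ equal to $1$ — so the purpose of this observation is not to bound the clique number but to certify that every source of large degree is a set of \emph{independence number one}.

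Before the main argument I would make two harmless reductions. Since $\tin$ of a disjoint union equals the maximum of the values on its components, assume $G$ is connected. Since $A\subseteq_{\mathrm{ind}}B$ implies that the class of $A$-free graphs is contained in the class of $B$-free graphs, and since every member of $\mathcal{S}$ (resp.\ $L(\mathcal{S})$) is an induced subgraph of a disjoint union of finitely many large three-legged spiders (resp.\ of the line graph of such a graph), it suffices to prove the statement when $S$ and $T$ have this canonical form. This is convenient because such $S$ and $T$ occur as induced subgraphs of every sufficiently large wall and of the line graph of every sufficiently large wall. The bounded-degree case is then already available: by Korhonen's theorem a graph of bounded maximum degree and sufficiently large treewidth contains an induced subdivision of a large wall or the line graph of one, hence an induced copy of $S$ or of $T$; so a bounded-degree $\{S,T\}$-free graph has bounded treewidth and therefore bounded tree-independence number, as $\tin(H)\le \tw(H)+1$.

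The heart of the proof is the reduction from unbounded to bounded degree. I would build a tree decomposition of $G$ recursively by repeatedly extracting, from each induced subgraph $H$ encountered, a balanced separator $X$ of bounded independence number; to keep $\alpha(X)$ bounded I would aim for $X$ to be a union of boundedly many cliques, exploiting that each clique contributes only $1$ to the independence number of a separator or bag and can therefore be absorbed ``for free''. The disjoint-union-of-cliques example shows why the decomposition must be allowed to branch rather than to delete a bounded set globally. The mechanism I would attempt is to peel off the large cliques anchored (via the Ramsey observation) at the high-degree vertices of $H$, each of independence number one, reduce the residual graph to bounded degree, apply the bounded-degree machinery above to obtain a balanced separator of bounded size there, and then add the peeled cliques back, keeping the independence number bounded. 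Assembling the recursion yields a tree decomposition of $G$ of bounded independence number.

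The main obstacle is precisely this clique-peeling step, and it is where the subtlety of the star exclusion bites. Although $K_{1,d}$-freeness bounds the independence number of every neighborhood, it does \emph{not} bound its clique-cover number (bounded independence number permits a complement of bounded clique number but unbounded chromatic number), so one cannot simply absorb each high-degree neighborhood into a bounded union of cliques; a more global argument is required, combining the sparse ``skeleton'' forced by excluding $S$ and $T$ with the clique structure forced by $K_{1,d}$-freeness. Concretely, the two points I expect to be hardest are to control how unboundedly many, possibly crossing or nested, large cliques overlap and attach to the rest of $H$ so that only boundedly many are needed per separation, and to transfer the exclusion of $S$ and $T$ in $G$ into the exclusion of walls and line graphs of walls in the bounded-degree residual structure, the latter being exactly what licenses Korhonen's theorem to bound its treewidth.
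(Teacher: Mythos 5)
Your proposal correctly isolates the bounded-degree case (where Korhonen's theorem applies, modulo the small slip that $S$ is found in induced subdivisions of large walls while $T$ is found in their line graphs, not each in both: line graphs are claw-free, so they contain no $S_p$, and walls are triangle-free, so they contain no $T_p$), but the entire difficulty of the theorem lies in the unbounded-degree regime, and there your argument stops at exactly the point where a proof would have to begin. The Ramsey observation that every high-degree vertex sees a large clique does not yield a reduction to bounded degree: deleting one large clique from $N(v)$ need not lower $\deg(v)$ at all, since $K_{1,d}$-freeness only bounds $\alpha(G[N(v)])$ and, as you yourself note, does not bound the clique cover number of $N(v)$ (a graph with independence number below $d$ can still require arbitrarily many cliques to cover it). So the loop ``peel cliques, reduce to bounded degree, apply Korhonen, add the cliques back'' is not a sketch with one hard step; its central step is unsupported, and you explicitly concede that ``a more global argument is required'' without supplying one. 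As it stands this is a well-organized statement of the problem rather than a proof.

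For comparison, the paper never reduces to bounded degree or to bounded treewidth of any residual graph. It first proves, by adapting Gy\'arf\'as's path-growing argument, that every $\{K_{1,d},P_s\}$-free graph has tree-independence number at most $(d-1)(s-2)$; here large cliques are harmless because the quantity being controlled is the independence number of bags, not their size, which is precisely why no clique-peeling is ever needed. Then, for connected $S_p$ and $T_p$, it takes a long induced path $P$, shows that $G-N[V(P)]$ has no long induced cycle (this is where $S_p$- and $T_p$-freeness enter), builds along a longest induced path of each remaining component a path decomposition of bounded independence number (the ``backbone structure''), shows that every further component attaches inside a single bag of that decomposition, and shows that such components contain no long induced path, so the path-exclusion theorem finishes them. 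The disconnected case is a short induction that deletes closed neighborhoods of induced copies of $S_p$ and $T_p$, each of bounded independence number. If you want to salvage your plan, the key adjustment is to abandon the goal of bounding degree or treewidth anywhere and instead control independence numbers of closed neighborhoods of induced paths throughout.
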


\cref{thm:main} shows that \cref{conj:main} holds for graph classes defined by a set $\mathcal{F}$ of finitely many forbidden induced subgraphs, as long as $\mathcal{F}$ contains some star.
Our constructive proof gives an explicit upper bound on the tree-independence number that is polynomial in $d$, $|V(S)|$, and $|V(T)|$ (in fact, linear in each of $|V(S)|$ and $|V(T)|$; we refer to \cref{cor:main} for the exact statement). 

The first step in our proof of \cref{thm:main} is the following weakening of \cref{conjecture:excluding-a-path}, which we state with an explicit bound.

\begin{restatable}{theorem}{excludePathOriginal}\label{thm:excludepathoriginal}
Let $d\ge 2$ and $s\ge 3$ be integers and let $G$ be a $\{K_{1,d},P_s\}$-free graph.
Then \[\tin(G)\le (d-1)(s-2)\,.\]
\end{restatable}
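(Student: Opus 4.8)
The plan is to construct, for any connected $\{K_{1,d},P_s\}$-free graph, a tree decomposition in which every bag is contained in a union of at most $s-2$ closed neighbourhoods. Since $K_{1,d}$-freeness is equivalent to $\alpha(G[N(v)])\le d-1$ for every vertex $v$ (and hence $\alpha(G[N[v]])\le d-1$ as well, using $d\ge 2$), any such bag $B$ then satisfies $\alpha(G[B])\le (d-1)(s-2)$: an independent set meets each of the covering neighbourhoods in at most $d-1$ vertices, so a union of $p$ neighbourhoods has independence number at most $p(d-1)$. I may assume $G$ is connected, since the tree-independence number of a graph is the maximum of those of its components, realised by linking the component decompositions into one tree.

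The decomposition is built recursively. Given a connected induced subgraph $H$, I choose a root $r\in V(H)$, create a node with bag $N_H[r]$, and recurse on each connected component $C$ of $H-N_H[r]$; to the decomposition $\mathcal{T}_C$ obtained for $G[C]$ I insert the boundary $N_H(C)\subseteq N_H(r)$ into every one of its bags and attach the root of $\mathcal{T}_C$ to the node of $r$. This is a valid tree decomposition: edges inside $N_H[r]$ and inside each component are covered, an edge between $N_H(r)$ and a component $C$ is covered because the relevant vertex of $N_H(C)$ sits in all bags of $\mathcal{T}_C$, and the occurrences of each vertex form a subtree (a vertex of $N_H(C)$ occurs at the node of $r$ together with the whole of the connected $\mathcal{T}_C$). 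A bag created at recursion depth $j$ equals $N[r_j]\cup N(C_1)\cup\dots\cup N(C_{j-1})$ for the roots $r_1,\dots,r_j$ along its branch, and each added boundary lies in a single neighbourhood $N(r_i)$, so the bag is covered by $j$ neighbourhoods. Consequently $\tin(G)\le (d-1)\cdot D$, where $D$ is the depth of the recursion.

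It therefore suffices to choose the roots so that the recursion has depth at most $s-2$. As $G$ is $P_s$-free, a longest induced path has at most $s-1$ vertices, so the bound reduces to the structural claim that every connected graph $H$ whose longest induced path has $k\ge 2$ vertices admits a root $r$ such that every component of $H-N_H[r]$ has a strictly shorter longest induced path; applying this repeatedly lowers the longest-induced-path length to the base cases (a clique, with $k=2$, or a single vertex, each eliminated in one step with no further components), giving depth at most $k-1\le s-2$. The mechanism for the claim is splicing: assuming a component still carried an induced path $Q$ on $k$ vertices, I would join $Q$ to a shortest bridge running from $r$ into the component, producing an induced path on $k+1$ vertices and contradicting maximality.

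The main obstacle is precisely the existence of a good root. The obvious candidates fail: if $r$ is an endpoint, or even the midpoint, of a longest induced path, a component of $H-N_H[r]$ can retain an induced $k$-path whenever the single vertex through which that component attaches to $r$ is adjacent to a long stretch of the path, blocking every splice; small examples of this behaviour exist. The splicing analysis does show that only the last bridge vertex can meet a surviving $k$-path and that it must be adjacent to at least two well-separated vertices of it, but converting this into a guaranteed extension forces a domination-type choice of root (in the bad examples the good root is a vertex \emph{dominating} the offending path, not an endpoint of it). Controlling the chords between a surviving long path in a component and its bridge to the chosen root is the technical heart, and securing the exact constant $s-2$ rather than $s-1$ relies on the clique and single-vertex base cases being absorbed without an extra level of recursion.
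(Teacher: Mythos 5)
Your proposal has a genuine gap, and you have identified it yourself: the entire argument rests on the claim that every connected graph $H$ whose longest induced path has $k\ge 2$ vertices contains a vertex $r$ such that every component of $H-N_H[r]$ has a strictly shorter longest induced path. You never prove this claim; you only observe that the natural candidates (endpoints or midpoints of a longest induced path) fail, that counterexamples to the naive splicing argument exist, and that one would need ``a domination-type choice of root'' whose existence you do not establish. Without this, there is no bound on the recursion depth and hence no bound on $\tin(G)$. Everything else in your write-up (the validity of the recursive decomposition, the fact that a bag covered by $p$ closed neighbourhoods of a $K_{1,d}$-free graph has independence number at most $p(d-1)$, the reduction of a depth-$(s-2)$ bound to the good-root claim) is fine, but it is the routine part; the good root is the theorem. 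What you are implicitly reaching for is essentially Seymour's result (cited in the paper) that a graph with no long induced cycles admits a tree decomposition whose bags are dominated by short paths --- a correct but nontrivial route that the paper mentions and deliberately does not take.

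The paper's own proof sidesteps this difficulty entirely by arguing in the contrapositive with tree-independence number itself as the potential function (adapting Gy\'arf\'as's $\chi$-boundedness argument). Assuming $\tin(G)\ge (d-1)(s-2)+1$, one repeatedly deletes $N[v_i]$ inside the current connected piece $G_i$; since $\alpha(G[N[v_i]])\le d-1$, \cref{lem:an-easy-lemma} guarantees that some component of what remains still has tree-independence number at least $(d-1)(s-i-2)+1$, and $v_{i+1}$ is then chosen as \emph{any} neighbour of that component inside $N(v_i)$. No root has to be selected so that \emph{all} components improve, and no longest-induced-path invariant has to be maintained --- only one surviving component with large $\tin$ is needed, and its existence is automatic. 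After $s$ steps this produces an induced $P_s$, a contradiction. If you want to salvage your direct construction, you would need to either prove the good-root claim (hard) or import Seymour's dominating-path decomposition as a black box; otherwise I recommend switching to the contrapositive argument.
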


\cref{thm:excludepathoriginal} can be derived from known results in the literature regarding tree decompositions of graphs excluding all sufficiently long induced cycles.
Bodlaender and Thilikos proved in~\cite{MR1478240} that in the absence of long induced cycles, bounded degree implies bounded treewidth.
They gave an upper bound of $\Delta(\Delta-1)^{s-3}$ on the treewidth of any graph with maximum degree at most $\Delta$ and no induced cycles of length more than $s$, where $s\ge 3$ is a fixed constant.
This bound was improved to $\mathcal{O}(\Delta s)$ by Kosowski, Li, Nisse, and Suchan~\cite{MR3355834}, who gave an $\mathcal{O}(|E(G)|^2)$ algorithm that computes, given a graph $G$ with no induced cycles of length more than $s$, a tree decomposition of $G$ in which the subgraph induced by each bag has a dominating path with at most $s-1$ vertices.
This result was further improved by Seymour~\cite{MR3425243}, who showed the existence of a tree decomposition in which the subgraph induced by each bag has a dominating path with at most $s-2$ vertices.
Since in a $K_{1,d}$-free graph, the closed neighborhood of any vertex induces a subgraph with independence number at most $d-1$,
this implies \cref{thm:excludepathoriginal}.

We give an alternative short proof of \cref{thm:excludepathoriginal} yielding the same bound, by adapting Gyárfás's proof of the result that any class of graphs excluding a fixed path as an induced subgraph is $\chi$-bounded~\cite{MR951359}.

For general graph classes excluding an induced star, \cref{conj:main} remains open, even in the case of graph classes excluding the claw (the star $K_{1,3}$).
As our next result, we show that \cref{conj:main} holds for subclasses of the class of line graphs (which are all known to exclude the claw).

\begin{restatable}{theorem}{propLineGraphs}\label{prop:line-graphs}
Let $\mathcal{G}$ be a class of graphs and let $L(\mathcal{G})$ be the class of line graphs of graphs in~$\mathcal{G}$. 
Then, the following statements are equivalent.
\begin{enumerate}
\item The class $L(\mathcal{G})$ is $(\tw,\omega)$-bounded.
\item The class $L(\mathcal{G})$ has bounded tree-independence number.
\item The class $\mathcal{G}$ has bounded treewidth.
\end{enumerate}
\end{restatable}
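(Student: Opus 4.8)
The plan is to prove the three statements equivalent by establishing the cycle of implications $(2) \Rightarrow (1) \Rightarrow (3) \Rightarrow (2)$. The implication $(2) \Rightarrow (1)$ requires no work: as recalled in the introduction, bounded tree-independence number implies $(\tw,\omega)$-boundedness for \emph{any} graph class, and this applies verbatim to $L(\mathcal{G})$. The heart of the argument is a pair of two-sided transfer estimates between the treewidth and tree-independence number of a graph $H$ and those of its line graph, together with the observation that induced subgraphs of $L(G)$ are exactly the line graphs of subgraphs of $G$: if $H$ is the subgraph of $G$ with edge set $F \subseteq E(G)$, then $L(G)[F] = L(H)$. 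This lets me move freely between taking induced subgraphs on the line-graph side and taking (not necessarily induced) subgraphs on the $\mathcal{G}$ side.

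For $(3) \Rightarrow (2)$ I would argue constructively. Fix $G \in \mathcal{G}$ with $\tw(G) \le k$ and a tree decomposition $(T,\{X_t\}_{t \in V(T)})$ of $G$ of width $k$. I define a tree decomposition of $L(G)$ on the same tree $T$ by setting $Y_t = \{e \in E(G) : e \cap X_t \ne \emptyset\}$, the set of edges incident to the bag $X_t$. Checking the tree-decomposition axioms is routine: every edge $uv$ lies in $Y_t$ for any $t$ with $u \in X_t$; two line-graph-adjacent edges share an endpoint $u$ and hence both lie in any $Y_t$ with $u \in X_t$; and the node set $\{t : e \in Y_t\}$ for $e = uv$ is the union of the two subtrees for $u$ and $v$, which meet because $u$ and $v$ are adjacent. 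The key point is the independence number of each bag: an independent set of $L(G)$ contained in $Y_t$ is a matching of $G$ all of whose edges meet $X_t$, and such a matching has size at most $|X_t| \le k+1$. Hence $\tin(L(G)) \le \tw(G)+1$, so bounded treewidth of $\mathcal{G}$ yields bounded tree-independence number of $L(\mathcal{G})$.

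For $(1) \Rightarrow (3)$ I would work contrapositively and combine two ingredients. The first is the complementary estimate $\tw(H) \le 2\,\tw(L(H))+1$: starting from a tree decomposition $(T,\{Z_t\})$ of $L(H)$ and replacing each bag of edges by the set $X_t = \bigcup_{e \in Z_t} e$ of their endpoints yields a tree decomposition of $H$ of width at most $2(\tw(L(H))+1)-1$, where connectivity holds because the edges incident to a fixed vertex $v$ form a clique in $L(H)$, whose bags have a common node by the Helly property of subtrees of a tree. The second ingredient controls the clique number: I want a bounded-degree witness of large treewidth. Assuming $\mathcal{G}$ has unbounded treewidth, the Excluded Wall Theorem provides, for every $N$, a graph $G \in \mathcal{G}$ containing a subdivision $H$ of a large wall as a subgraph (a wall is subcubic, so its appearance as a minor upgrades to a topological minor, i.e.\ a subdivision that is an actual subgraph), with $\tw(H) \ge N$ since the wall is a minor of any of its subdivisions. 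Now $L(H)$ is an induced subgraph of $L(G) \in L(\mathcal{G})$ with $\omega(L(H)) \le 3$ (as $\Delta(H) \le 3$ and $H$ is triangle-free), while $\tw(L(H)) \ge (\tw(H)-1)/2 \ge (N-1)/2$ by the estimate above. Letting $N \to \infty$ contradicts $(\tw,\omega)$-boundedness of $L(\mathcal{G})$, which would force $\tw(L(H)) \le f(3)$ for a fixed binding function $f$.

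The step I expect to be the main obstacle is this final one: producing, inside a high-treewidth member of $\mathcal{G}$, a subgraph of \emph{bounded maximum degree} whose treewidth is still large. High degree is unavoidable in the clique number of $L(G)$, so the whole direction $(1) \Rightarrow (3)$ hinges on isolating a subcubic high-treewidth subgraph; the Excluded Wall Theorem, together with the minor/topological-minor equivalence for subcubic graphs, is exactly what makes this possible, and getting the quantitative bookkeeping right---that the wall's height, and hence $\tw(H)$, grows with $\tw(G)$---is where the care is needed.
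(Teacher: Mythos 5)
Your proof is correct and follows essentially the same route as the paper: the same cycle of implications, the wall-subdivision argument with $\omega(L(H))\le 3$ for $(1)\Rightarrow(3)$, and the bound $\tin(L(G))\le \tw(G)+1$ for $(3)\Rightarrow(2)$. The only difference is presentational — you prove the two transfer inequalities directly (via the bag constructions $Y_t=\{e: e\cap X_t\neq\emptyset\}$ and $X_t=\bigcup_{e\in Z_t}e$), whereas the paper cites Bodlaender–Gustedt–Telle for the first and Harvey–Wood for the second; both of your derivations are sound.
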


\cref{prop:line-graphs} complements the result of Brettell et al.~\cite{brettell2023comparing} showing that on any class of line graphs $L(\mathcal{G})$, tree-independence number, clique-width, mim-width, and sim-width are in fact all equivalent, in the sense that each of these parameters is bounded if and only if all the others are.
Indeed, we prove a stronger version of \cref{prop:line-graphs} as follows.
For a graph class $\mathcal{G}$, we denote by $I(\mathcal{G})$ the class of all intersection graphs of collections of connected subgraphs of some member of~$\mathcal{G}$.
We show that the conclusion of \cref{prop:line-graphs} holds for $I(\mathcal{G})$ for \textit{any} graph class $\mathcal{G}$.

We also determine the exact values of the tree-independence number of line graphs of complete graphs and line graphs of complete bipartite graphs.
These results complement similar results for treewidth and sim-width (denoted by $\tw$ and $\simw$, respectively; see~\cite{Luc07,MR3820302,HW15,brettell2023comparing}) and are interesting in view of the inequalities $\simw(G)\le \tin(G)\le \tw(G)+1$ valid for any graph $G$.
For the first inequality, see~\cite[Lemma 5]{MR4563598} or~\cite[Theorem~2 or Theorem~15]{DBLP:conf/wg/BergougnouxKR23}; the second one follows immediately from the definitions.

Finally, regarding \cref{conjecture:excluding-a-path} ``in the other direction'', keeping the assumption that a complete bipartite graph is excluded, but limiting the length of the excluded path, we improve the known result about the first nontrivial case of the conjecture in this regard, that is, the case $s = 4$.
For this case we improve the aforementioned exponential upper bound for the tree-independence number to the following sharp upper bound.

\begin{restatable}{theorem}{restateKdd}\label{tin-of-P4-tree-graphs-original}
Let $G$ be a $\{K_{d,d},P_4\}$-free graph, where $d\ge 2$ is an integer. Then $\tin(G)\le d-1$.
\end{restatable}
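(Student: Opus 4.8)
The plan is to exploit the classical fact that a graph is $P_4$-free if and only if it is a \emph{cograph}, and hence admits a recursive decomposition: every cograph on at least two vertices is either the disjoint union $G_1\oplus G_2$ of two smaller nonempty cographs, or---when it is connected---the join $G_1\otimes G_2$ of two smaller nonempty cographs (the latter because the complement of a connected cograph is disconnected). I would prove $\tin(G)\le d-1$ by induction on $|V(G)|$ following this decomposition. Since the classes of $\{K_{d,d},P_4\}$-free graphs and of cographs are both hereditary, every induced subgraph arising in the recursion is again $\{K_{d,d},P_4\}$-free, so the induction hypothesis applies to the pieces. The base case of a single vertex is immediate, as there $\tin\le 1\le d-1$.

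For the disjoint-union case $G=G_1\oplus G_2$ the argument is routine: gluing optimal tree decompositions of $G_1$ and $G_2$ by joining their trees with a single edge yields a tree decomposition of $G$ whose bags are unchanged, so $\tin(G)=\max\{\tin(G_1),\tin(G_2)\}\le d-1$ by induction. The heart of the argument is the join case. Here $G=G[A]\otimes G[B]$, where $A$ is the vertex set of one component of the complement of $G$ and $B=V(G)\setminus A$, both nonempty, and every vertex of $A$ is adjacent to every vertex of $B$. The first key observation is that $K_{d,d}$-freeness forces $\min\{\alpha(G[A]),\alpha(G[B])\}\le d-1$: otherwise, independent sets of size $d$ on each side would induce a $K_{d,d}$. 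So I may assume without loss of generality that $\alpha(G[B])\le d-1$.

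I would then take, by the induction hypothesis, a tree decomposition $(T,\{X_t\}_{t\in V(T)})$ of $G[A]$ with $\alpha(G[X_t])\le d-1$ for every $t$, and build a tree decomposition of $G$ by adding all of $B$ to every bag. This is valid: edges inside $A$ are covered by the original decomposition, while every vertex of $B$ lies in all bags, which covers all edges incident to $B$ and trivially satisfies the connectivity condition. The crux is to bound the independence number of the enlarged bags $X_t\cup B$, and this is where the whole argument hinges. Because of the complete join between $A$ and $B$, no independent set of $G$ can contain a vertex of $A$ together with a vertex of $B$; hence every independent set in $G[X_t\cup B]$ lies entirely in $X_t$ or entirely in $B$, so that $\alpha(G[X_t\cup B])=\max\{\alpha(G[X_t]),\alpha(G[B])\}\le\max\{d-1,d-1\}=d-1$. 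This closes the induction.

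I expect the only genuine subtlety to be this ``non-crossing'' of independent sets across a join, which is precisely what prevents the bag independence numbers from adding up and instead keeps them at the maximum of the two sides; everything else is standard cograph recursion and routine tree-decomposition bookkeeping. This is exactly what replaces the generic, exponential-in-$d$ mim-width bound by the sharp linear bound $d-1$ (sharpness being witnessed, e.g., by $K_{d-1,d-1}$, which is $\{K_{d,d},P_4\}$-free).
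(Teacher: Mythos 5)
Your proof is correct and follows essentially the same route as the paper: induction along the cograph decomposition, with the join case handled by noting that $K_{d,d}$-freeness forces one side to have independence number at most $d-1$ and adding that side to every bag of an inductively obtained decomposition of the other side, using that independent sets cannot cross a join. The paper packages the identical argument as an exact characterization $\tin(G)=\max\{\ibn(G),1\}$ for all $P_4$-free graphs and then specializes, but the core join-case computation is the same as yours.
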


The proof of \cref{tin-of-P4-tree-graphs-original} is constructive and leads to a linear-time algorithm for determining the tree-independence number of a given $P_4$-free graph.

\subsection{Algorithmic implications}

\cref{thm:main} together with known results on tree-independence number (see~\cite{DBLP:conf/soda/Yolov18,dallard2022computing,dallard2022firstpaper}) imply that all the good algorithmic properties of graphs with bounded tree-independence number (see~\cite{DBLP:conf/soda/Yolov18,dallard2022firstpaper,milanic2022beyond}) hold for any class of graphs excluding a star, a graph from $\mathcal{S}$, and a line graph of a graph from $\mathcal{S}$; in particular, on such graph classes a number of \NP-hard problems can be solved in polynomial time.
We only state and explicitly discuss the corresponding result for the most well-known and studied of these problems,  namely \textsc{Maximum Weight Independent Set}: Given a graph $G$ and a vertex weight function $w:V(G)\to \mathbb{Q}_+$, compute an independent set $I$ in $G$ maximizing its weight $\sum_{x\in I}w(x)$.

\begin{corollary}\label{cor:main-algorithm}
For any positive integer $d$ and any two graphs $S\in \mathcal{S}$ and $T\in L(\mathcal{S})$, \textsc{Maximum Weight Independent Set} is solvable in polynomial time in the class of $\{K_{1,d},S,T\}$-free graphs.
\end{corollary}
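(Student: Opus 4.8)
The plan is to obtain the corollary as an immediate consequence of \cref{thm:main} together with the now-standard algorithmic pipeline for graph classes of bounded tree-independence number. First I would invoke \cref{thm:main} to fix an explicit constant $k = k(d, |V(S)|, |V(T)|)$ such that every $\{K_{1,d}, S, T\}$-free graph $G$ satisfies $\tin(G) \le k$. Since $d$, $S$, and $T$ are fixed once the class is fixed, $k$ is a constant depending only on the class and not on the input graph; the explicit polynomial bound in \cref{thm:main} moreover guarantees that $k$ is computable from $d$, $S$, and $T$, so the algorithm can be described uniformly.

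The second step handles the fact that a graph $G$ in the class is presented without an accompanying tree decomposition. Here I would apply the result of Dallard, Milani{\v c}, and {\v S}torgel~\cite{dallard2022computing}: given $G$ with $\tin(G) \le k$ for a fixed $k$, one can compute in polynomial time a tree decomposition of $G$ whose independence number is at most $8k$. Thus, in polynomial time, we produce a tree decomposition of $G$ whose independence number is bounded by the constant $8k$.

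The final step is to solve \textsc{Maximum Weight Independent Set} on this decomposition. Given a tree decomposition whose independence number is bounded by a constant, \textsc{Maximum Weight Independent Set} admits a polynomial-time dynamic-programming algorithm (see~\cite{DBLP:conf/soda/Yolov18,dallard2022firstpaper}); alternatively, the entire statement is subsumed by the \cmsotwo metatheorem of Milani{\v c} and Rz\k{a}\.zewski~\cite{milanic2022beyond}, of which \textsc{Maximum Weight Independent Set} is the special case of a maximum-weight induced subgraph with chromatic number at most $1$ satisfying the trivially true property. Composing the three polynomial-time steps yields the desired polynomial-time algorithm for the whole class.

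Because every ingredient of this pipeline is already established in the literature, I expect no genuine obstacle in the argument: the entire content of the corollary is the boundedness of the tree-independence number supplied by \cref{thm:main}, and what remains is a routine application of the surrounding toolkit. The only point I would take care to verify explicitly is the computability of the constant $k$ from $d$, $S$, and $T$, so that the combined procedure is a bona fide algorithm rather than merely a nonuniform family of algorithms; as noted, this is delivered by the explicit bound in \cref{thm:main}.
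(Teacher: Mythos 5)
Your proposal is correct and follows exactly the pipeline the paper intends: \cref{thm:main} supplies the constant bound on the tree-independence number, the algorithm of~\cite{dallard2022computing} produces a tree decomposition with independence number at most $8k$ in polynomial time, and the dynamic programming of~\cite{DBLP:conf/soda/Yolov18,dallard2022firstpaper} solves \textsc{Maximum Weight Independent Set} on it. This is the same argument the paper relies on (it states the corollary as an immediate consequence of these ingredients without further proof), so no comparison is needed.
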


This result is interesting in view of the state-of-the-art regarding the complexity of the problem in classes of graphs excluding finitely many forbidden induced subgraphs.
For any finite set $\mathcal{F}$ of graphs, Alekseev proved in~\cite{MR765704} that the \textsc{Maximum Independent Set} is \NP-hard in the class of $\mathcal{F}$-free graphs, unless $\mathcal{F}$ contains a member of $\mathcal{S}$.
On the other hand, Lozin conjectured that the problem is solvable in polynomial time if $\mathcal{F}$ contains a member of $\mathcal{S}$ (see~\cite{MR3695266}).
While the conjecture is still widely open, even in the case of excluding a path (see, e.g.,~\cite{DBLP:conf/focs/GartlandL20,DBLP:conf/sosa/PilipczukPR21,MR4374260}), although a subexponential algorithm was given by Majewski et al.\ \cite{DBLP:conf/icalp/MajewskiM0OPRS22}.
Furthermore, the conjecture was confirmed for the case of graphs with bounded maximum degree by Abrishami, Chudnovsky, Dibek, and Rz\k{a}\.{z}ewski (see~\cite{DBLP:conf/soda/AbrishamiCDR22}).
A shorter proof was given recently by Abrishami, Chudnovsky,  Pilipczuk, and Rz\k{a}\.zewski~\cite{abrishami2023max}, who also generalized the bounded degree assumption to the assumption that some complete bipartite graph is excluded as a subgraph.
\cref{cor:main-algorithm} shows that, if in addition to a graph from $\mathcal{S}$ the line graph of such a graph is also excluded, then the maximum degree assumption can be relaxed to the assumption of excluding an induced star.

Let us remark that in the special case of excluding a \emph{claw}, that is, the graph $K_{1,3}$, the \textsc{Maximum Weight Independent Set} problem is known to be solvable in polynomial time without any additional assumptions (see~\cite{MR0579076,MR1845110,zbMATH01859168}, as well as~\cite{MR2463423,MR3763297} for generalizations).
On the other hand, for $d>3$, the \textsc{Maximum Weight Independent Set} is \NP-hard in the class of $K_{1,d}$-free graphs, but admits a polynomial-time $(d/2)$-approximation algorithm (see~\cite{MR1810314,DBLP:conf/stacs/Neuwohner21}).

\subsection{Structure of the paper}

\cref{sec:prelims} introduces the notations and observations that  we use throughout the paper.
In \cref{sec:star-path} we discuss the special case of \cref{conj:main} restricted to $\{K_{1,d},P_s\}$-free graphs.
\cref{sec:main} deals with our main technical result, where we settle the case of $\{K_{1,d},S,T\}$-free graphs, for any $S\in \mathcal{S}$ and $T\in L(\mathcal{S})$, which proves \cref{thm:main}.
In \cref{sec:line} we establish the validity of \cref{conj:main} for subclasses of the class of line graphs and determine the exact values of tree-independence number of line graphs of complete graphs and complete bipartite graphs.
In \cref{sec:p4-free} we develop a linear-time algorithm for computing the tree-independence number of a $P_4$-free graph.
We conclude the paper with some open questions and insights for future research in \cref{sec:conc}.

\section{Preliminaries\label{sec:prelims}}

Given two integers $a,b\in\mathbb{Z}$, we denote by $[a,b]$ the set $\{ x\in\mathbb{Z} \mid a\leq x\leq b \}$.
Notice that $[a,b]$ is empty in the case $b<a$.
Moreover, for a single integer $k\in\mathbb{Z}$, we denote by $[k]$ the set $[1,k]$.

Concerning graph notation we follow mostly the 
conventions from \cite{Diestel:GraphTheory5}.
The graphs considered in this paper are finite and simple, that is, they do not contain loops or parallel edges.
For a graph $G$ and a set $S \subseteq V(G)$, we write $G-S$ to denote the graph obtained from $G$ by deleting the vertices in $S$ (and the edges incident to those vertices).
 In the case when $S$ consists of a single vertex $v$, we write $G-v$ for $G-S$.
A~graph class is \emph{hereditary} if it is closed under vertex deletion.
We denote by $G_1+G_2$ the \emph{disjoint union} of two graphs $G_1$ and $G_2$, and by $G_1\ast G_2$ their \emph{join}, that is, the graph obtained from the disjoint union of $G_1$ and $G_2$ by adding all edges joining a vertex of $G_1$ with a vertex of $G_2$.

For a graph $G$ the operation of \emph{subdividing} the edge $e=uv\in E(G)$ is the deletion of $e$ and the introduction of a vertex $w$ adjacent exactly to the vertices $u$ and $v$.
A graph $G$ is a \emph{subdivision} of a graph $H$ if it can be obtained from $H$ by a sequence of edge subdivisions.

A \emph{path} is a graph $P$ with vertex set $\{v_1,\dots,v_{\ell}\}$ such that $v_iv_{i+1}$ is an edge for every $i\in[\ell-1]$, and there are no other edges.
We say that $v_1$ and $v_{\ell}$ are the \emph{endpoints} of $P$, all other vertices of $P$ are \emph{internal}, and the \emph{length} of $P$ is $\ell-1$.
For a graph $G$ and vertex sets $X,Y\subseteq V(G)$, an \emph{$(X, Y)$-path} is a path in $G$ with one endpoint in $X$, the other endpoint in $Y$, and no internal vertex in $X\cup Y$.
In the case any of $X$ and $Y$ consists of a single vertex, we may write the vertex instead of the set.
Given a path $P$ and two vertices $u,v\in V(P)$ we denote by $uPv$ the unique $(u, v)$-path in $P$ (writing also $uv$ in the case $uPv$ has length one).
Moreover, given two paths $P$ and $Q$, a vertex $x\in V(P)$, a vertex $y\in V(P)\cap V(Q)$, and a vertex $z\in V(Q)$, we write $xPyQz$ for the union of the paths $xPy$ and $yQz$.

An \emph{independent set} in a graph $G$ is a set of pairwise non-adjacent vertices, and a \emph{clique} in $G$ is a set of pairwise adjacent vertices. The \emph{independence number} of a graph $G$, denoted by $\alpha(G)$, is the maximum size of an independent set in $G$. The \emph{clique number} of a graph $G$, denoted by $\omega(G)$, is the maximum size of a clique in $G$.
Given two positive integers $a$ and $b$, the \emph{complete bipartite graph} $K_{a,b}$ is a graph whose vertex set admits a partition into two independent sets $A$ and $B$ such that $|A| = a$, $|B| = b$, and every vertex in $A$ is adjacent to every vertex in $B$.

Given a graph $G$, a \emph{tree decomposition} of $G$ is a pair $\mathcal{T}=(T,\beta)$ of a tree $T$ and a function $\beta\colon V(T)\to 2^{V(G)}$ whose images are called the \emph{bags} of $\mathcal{T}$ such that every vertex belongs to some bag, for every $e\in E(G)$ there exists some $t\in V(T)$ with $e\subseteq \beta(t)$, and for every vertex $v\in V(G)$ the set $\{t\in V(T) \mid v\in \beta(t)\}$ induces a subtree of $T$.
We refer to the vertices of $T$ as the \emph{nodes} of the tree decomposition $\mathcal{T}$. 
If $T$ is a path, then we call $\mathcal{T}$ a \emph{path decomposition} of $G$.
The \emph{width} of $\mathcal{T}$ equals $\max_{t \in V(T)} |\beta(t)|-1$, and the \emph{treewidth} of a graph $G$, denoted by $\tw(G)$, is the minimum possible width of a tree decomposition of $G$. 
The \emph{independence number} of $\mathcal{T}$, denoted by $\alpha(\mathcal{T})$, is defined as  \[\alpha(\mathcal{T})=\max_{t\in V(T)} \alpha(G[\beta(t)]).\]
The \emph{tree-independence number} of a graph $G$, denoted by $\tin(G)$, is the minimum independence number among all possible tree decompositions of $G$.
Observe that every graph $G$ satisfies $\tin(G) \leq \alpha(G)$. 
The tree-independence number of a graph is bounded from below by its sim-width, a parameter introduced in 2017 by Kang, Kwon, Str\o{}mme, and Telle~\cite{MR3721445}.
Since we will not need the precise definition of sim-width in this paper, we refer the reader to~\cite{MR3721445} for the definition.

The following monotonicity of treewidth is well known.
Given two graphs $G$ and $H$, we say that $H$ is a \emph{minor} of $G$ if $H$ can be obtained from a subgraph of $G$ by a sequence of edge contractions.

\begin{proposition}[folklore]\label{lem:tw-minor}
Let $G$ be a graph and $H$ a minor of $G$.
Then $\tw(H)\le \tw(G)$.
\end{proposition}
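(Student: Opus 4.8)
The plan is to reduce to the three elementary operations whose composition yields any minor --- deletion of a vertex, deletion of an edge, and contraction of an edge --- and to show that none of them can increase treewidth. Since $H$ is obtained from a subgraph of $G$ by a sequence of edge contractions, and a subgraph is obtained from $G$ by vertex and edge deletions, it suffices to prove that each single operation, applied to an arbitrary graph, produces a graph of no larger treewidth; the general statement then follows by applying this bound to each operation in the sequence in turn.

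First I would fix a tree decomposition $(T,\beta)$ of the current graph realizing its treewidth and dispose of the two deletion operations, which are immediate. Deleting an edge alters none of the three defining conditions of a tree decomposition except that one fewer edge must be covered, so $(T,\beta)$ remains a tree decomposition of the smaller graph, of the same width. Deleting a vertex $v$ is handled by replacing each bag $\beta(t)$ with $\beta(t)\setminus\{v\}$: the connectedness and edge-coverage conditions are inherited, every bag can only shrink, and hence the width does not increase.

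The substantive step is edge contraction. Suppose we contract an edge $uv$ into a single new vertex $w$, and let $(T,\beta)$ be a tree decomposition of the graph before contraction. I would define a new bag assignment $\beta'$ by setting $\beta'(t)=(\beta(t)\setminus\{u,v\})\cup\{w\}$ whenever $u\in\beta(t)$ or $v\in\beta(t)$, and $\beta'(t)=\beta(t)$ otherwise. Checking that $(T,\beta')$ is a tree decomposition of the contracted graph comes down to the three axioms: every vertex still lies in some bag (with $w$ inheriting the bags of $u$ and of $v$); every edge of the contracted graph corresponds to an edge of the original one that is still covered after the substitution; and, crucially, the set of nodes whose bag contains $w$ equals the union of the node sets for $u$ and for $v$. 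Because $uv$ is an edge, some bag contains both $u$ and $v$, so these two subtrees of $T$ share a node and their union is again a subtree --- this is the single place where the adjacency of $u$ and $v$ is used, and it is the crux of the argument. Finally, each bag loses up to two vertices and gains one, so $|\beta'(t)|\le|\beta(t)|$ for every node $t$, and the width does not increase. Composing these inequalities across the whole sequence of operations yields $\tw(H)\le\tw(G)$.
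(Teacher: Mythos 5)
Your proof is correct and is the standard argument for this folklore fact; the paper itself states the proposition without proof, simply citing it as well known. All three steps check out, including the crucial point that the subtrees of nodes containing $u$ and containing $v$ intersect (because some bag contains the edge $uv$), so their union is again a subtree and the contracted vertex $w$ satisfies the connectivity axiom.
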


A graph is said to be \emph{chordal} if it does not contain any induced cycles of length at least four.
Treewidth can be defined in many equivalent ways. 
One of the characterizations is as follows (see, e.g.,~\cite{MR1647486}).

\begin{theorem}\label{thm:tw-via-chordal-graphs}
Let $G$ be a graph. 
Then, the treewidth of $G$ equals
the minimum value of $\omega(G')-1$ such that $G$ is a subgraph of $G'$ and $G'$ is chordal. 
\end{theorem}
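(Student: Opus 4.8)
The plan is to prove two inequalities: that $\tw(G)$ is at most the stated minimum, and that it is at least that minimum. For the first inequality I would take any chordal graph $G'$ with $G$ a subgraph of $G'$ and use the fact that chordal graphs satisfy $\tw(G') = \omega(G')-1$. Since a subgraph is in particular a minor, \cref{lem:tw-minor} gives $\tw(G)\le \tw(G') = \omega(G')-1$; minimizing over all such $G'$ yields $\tw(G)\le \min_{G'}(\omega(G')-1)$.

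For the reverse inequality I would exhibit a single chordal supergraph attaining the bound. Fix an optimal tree decomposition $(T,\beta)$ of $G$, of width $\tw(G)$, and let $G'$ be the graph on $V(G)$ in which $uv$ is an edge precisely when some bag $\beta(t)$ contains both $u$ and $v$; thus every bag of $(T,\beta)$ becomes a clique and $G$ is a subgraph of $G'$. By construction $(T,\beta)$ is still a tree decomposition of $G'$ — it satisfies the edge condition because every edge of $G'$ lies in a common bag — and each of its bags induces a clique. Two facts then finish the argument: first, $G'$ is chordal; second, by the Helly property of subtrees of a tree, every clique of $G'$ is contained in a single bag, so $\omega(G')\le \max_{t}|\beta(t)| = \tw(G)+1$. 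Hence $\min_{G'}(\omega(G')-1)\le \omega(G')-1\le \tw(G)$, and combining the two inequalities proves the theorem.

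The crux — and the step I expect to be the main obstacle — is the equivalence between chordality and the existence of a tree decomposition whose bags are cliques, which is used in both directions (to know that a chordal $G'$ has treewidth $\omega(G')-1$ via a clique tree, and to know that the fill-in $G'$ constructed above is chordal). For the direction needed in the reverse inequality I would identify each vertex $v$ with the subtree $\{t : v\in\beta(t)\}$ of $T$ and observe that adjacency in $G'$ coincides with intersection of the associated subtrees, so that $G'$ is an intersection graph of subtrees of a tree and hence chordal; a long induced cycle would force, via the Helly property, a contradictory configuration of pairwise intersecting and non-intersecting subtrees. For the direction needed in the first inequality I would build a clique tree of an arbitrary chordal graph, e.g.\ by induction on the number of vertices using a simplicial vertex, whose closed neighborhood is a clique, and attaching a dedicated bag for it. Both constructions are classical, with the Helly property of subtrees of a tree serving as the common technical engine.
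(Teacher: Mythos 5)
Your proof is correct. Note that the paper does not actually prove this statement: it is quoted as a classical, well-known characterization of treewidth with a reference to the literature, so there is no in-paper argument to compare against. Your two-inequality argument --- minor-monotonicity plus $\tw(G')=\omega(G')-1$ for chordal $G'$ in one direction, and the fill-in of an optimal tree decomposition together with chordality of subtree intersection graphs and the Helly property in the other --- is exactly the standard proof found in the cited sources, and the auxiliary facts you flag as the crux (clique trees of chordal graphs via simplicial vertices, and the equivalence between chordality and being an intersection graph of subtrees of a tree) are indeed the classical ingredients one would supply to make it self-contained.
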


We will need the following results on tree decompositions and tree-independence number from Dallard et al.~\cite{dallard2022firstpaper}.

\begin{lemma}\label{lem:closednbr}
Let $G$ be a graph and let $\mathcal{T}=(T,\beta)$ be a tree decomposition of $G$. 
Then there exists a vertex $v\in V(G)$ and a node $t\in V(T)$ such that $N[v]\subseteq \beta(t)$.
\end{lemma}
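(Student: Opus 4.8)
The plan is to exploit the subtree structure of tree decompositions after rooting the tree, and to select the vertex $v$ by an extremal (deepest) argument.

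First I would root the tree $T$ at an arbitrary node $r$, so that every node has a well-defined depth and a well-defined set of ancestors. The key structural fact I would use is the one built into the definition: for each vertex $w\in V(G)$, the set $T_w:=\{t\in V(T)\mid w\in\beta(t)\}$ induces a connected subtree of $T$. Consequently $T_w$ has a unique node closest to the root, which I denote $\mathrm{top}(w)$, and this node is an ancestor of every node of $T_w$ (the node of a connected subtree nearest the root is an ancestor of the whole subtree).

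Next I would choose the candidate vertex by extremality: among all vertices $w\in V(G)$, let $v$ be one for which the depth of $\mathrm{top}(v)$ is maximum, and set $t:=\mathrm{top}(v)$. Then $v\in\beta(t)$ by definition, so it remains to show that every neighbour $u$ of $v$ also lies in $\beta(t)$. The argument runs as follows. Since $uv\in E(G)$, there is a node $s$ with $u,v\in\beta(s)$; as $v\in\beta(s)$, the node $s$ lies in $T_v$ and hence is a descendant of $t=\mathrm{top}(v)$. Because $u\in\beta(s)$, the node $\mathrm{top}(u)$ is an ancestor of $s$, and by the maximality in the choice of $v$ its depth is at most the depth of $t$. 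Thus $\mathrm{top}(u)$ and $t$ are both ancestors of $s$ satisfying $\mathrm{depth}(\mathrm{top}(u))\le\mathrm{depth}(t)\le\mathrm{depth}(s)$, so $t$ lies on the root-to-$s$ path between $\mathrm{top}(u)$ and $s$. Since $T_u$ is connected and contains both $\mathrm{top}(u)$ and $s$, it contains this entire path, and in particular $t\in T_u$, i.e.\ $u\in\beta(t)$. This yields $N[v]\subseteq\beta(t)$, as required.

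The step that needs the most care, and which I regard as the main obstacle, is verifying that $t$ genuinely lies between $\mathrm{top}(u)$ and $s$ on $T$. This relies on the fact that the ancestors of a fixed node form a chain: the two ancestors $\mathrm{top}(u)$ and $t$ of $s$ are therefore comparable and can be ordered by depth, which is exactly where the extremal choice of $v$ is used. Once this is established, connectivity of $T_u$ immediately gives $t\in T_u$. I would finally double-check the boundary cases $s=t$ and $\mathrm{top}(u)=t$, which are absorbed uniformly by allowing equalities in the depth inequalities above.
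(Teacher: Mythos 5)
Your proof is correct. The paper itself does not prove this lemma --- it imports it from Dallard et al.\ \cite{dallard2022firstpaper} without proof --- and your argument (root the tree, pick the vertex $v$ whose subtree $T_v$ has deepest top node, and use the chain of ancestors of a common bag $s$ together with connectivity of $T_u$) is the standard, complete proof of this fact.
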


Given two graphs $G$ and $H$, we say that $H$ is an \emph{induced minor} of $G$ if $H$ can be obtained from $G$ by a sequence of vertex deletions and edge contractions.

\begin{proposition}\label{lem:tree-independence number induced-minor}
Let $G$ be a graph and $H$ an induced minor of $G$.
Then $\tin(H)\le \tin(G)$.
\end{proposition}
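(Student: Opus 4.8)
The plan is to decompose an arbitrary induced minor into a sequence of two elementary operations---single vertex deletions and single edge contractions---and to show that neither operation can increase the tree-independence number. Since an induced minor of $G$ is by definition obtained from $G$ by a finite sequence of such operations, a straightforward induction on the length of this sequence then yields $\tin(H)\le\tin(G)$. In both cases I start from a tree decomposition $\mathcal{T}=(T,\beta)$ of $G$ realizing $\alpha(\mathcal{T})=\tin(G)$ and modify only its bags, keeping the same underlying tree $T$.

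For a single vertex deletion $H=G-v$, I simply set $\beta'(t)=\beta(t)\setminus\{v\}$. This is immediately a tree decomposition of $G-v$, and since each $H[\beta'(t)]$ is an induced subgraph of $G[\beta(t)]$ we have $\alpha(H[\beta'(t)])\le\alpha(G[\beta(t)])$; hence $\tin(G-v)\le\tin(G)$. The substantive case is a single edge contraction. Let $e=uv\in E(G)$, let $w$ denote the vertex of $H=G/e$ arising from identifying $u$ and $v$, and define $\beta'(t)=(\beta(t)\setminus\{u,v\})\cup\{w\}$ whenever $\{u,v\}\cap\beta(t)\neq\emptyset$, and $\beta'(t)=\beta(t)$ otherwise.

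I first verify that $(T,\beta')$ is a tree decomposition of $H$: vertex and edge coverage are routine (an edge $wx$ of $H$ comes from an edge $ux$ or $vx$ of $G$, whose witnessing bag now contains $w$ and $x$), and the only delicate point is the connectivity condition for the new vertex $w$. Here $\{t:w\in\beta'(t)\}$ is the union of the two subtrees $\{t:u\in\beta(t)\}$ and $\{t:v\in\beta(t)\}$, and since $uv\in E(G)$ some bag contains both $u$ and $v$, so these subtrees intersect and their union is again a subtree.

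It then remains to bound the independence number bagwise, i.e.\ to show $\alpha(H[\beta'(t)])\le\alpha(G[\beta(t)])$ for every node $t$. Given a maximum independent set $I$ of $H[\beta'(t)]$, if $w\notin I$ then $I\subseteq\beta(t)$ and, adjacencies among vertices other than $w$ being identical in $G$ and $H$, $I$ is already independent in $G[\beta(t)]$. If $w\in I$, then $w\in\beta'(t)$ forces $u\in\beta(t)$ or $v\in\beta(t)$; replacing $w$ by whichever of $u,v$ lies in $\beta(t)$ yields a set of the same size in $G[\beta(t)]$, and it is independent because non-adjacency of a vertex $x$ to $w$ in $H$ means $x$ is non-adjacent to both $u$ and $v$ in $G$. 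Either way $\alpha(G[\beta(t)])\ge|I|$, so $\alpha(\mathcal{T}')\le\tin(G)$ and $\tin(G/e)\le\tin(G)$. The main obstacle I anticipate is precisely this last step---correctly lifting an independent set that uses the contracted vertex $w$ back to $G$---together with the verification that the union of the two subtrees stays connected; both points hinge on $uv$ being an edge of $G$.
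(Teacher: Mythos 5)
Your proof is correct and complete: reducing to single vertex deletions and single edge contractions, modifying only the bags, and lifting an independent set containing the contracted vertex $w$ back to $G$ via one of $u,v$ is exactly the standard argument, and you handle the two delicate points (connectivity of the union of the two subtrees, and the bagwise independence bound) properly. The paper itself states this proposition without proof, importing it from the first paper of the series, and your argument coincides with the standard proof given there, so there is nothing to compare beyond noting that your write-up would serve as a valid self-contained replacement for the citation.
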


\begin{proposition}\label{tin-of-Knn}
For every positive integer $n$, we have $\tin(K_{n,n})  = n$.
\end{proposition}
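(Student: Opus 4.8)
The plan is to prove the two inequalities $\tin(K_{n,n}) \le n$ and $\tin(K_{n,n}) \ge n$ separately, writing $A$ and $B$ for the two parts of the bipartition, each of size $n$.

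For the upper bound, I would use the trivial tree decomposition consisting of a single node whose bag is all of $V(K_{n,n})$. The only maximal independent sets of $K_{n,n}$ are $A$ and $B$, so $\alpha(K_{n,n}) = n$, and this single-bag decomposition has independence number exactly $n$. Together with the observation (recorded after the definition of $\tin$) that $\tin(G) \le \alpha(G)$ for every graph $G$, this immediately yields $\tin(K_{n,n}) \le n$.

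For the lower bound I would invoke \cref{lem:closednbr}. Let $\mathcal{T} = (T,\beta)$ be an arbitrary tree decomposition of $K_{n,n}$. By \cref{lem:closednbr}, there exist a vertex $v \in V(K_{n,n})$ and a node $t \in V(T)$ with $N[v] \subseteq \beta(t)$. The key point is that, irrespective of which part $v$ lies in, its closed neighborhood contains the entire opposite part: if $v \in A$ then $N[v] = \{v\} \cup B$, and if $v \in B$ then $N[v] = \{v\} \cup A$. In either case $\beta(t)$ contains one of the two parts, which is an independent set of size $n$. Hence $\alpha(G[\beta(t)]) \ge n$, and therefore $\alpha(\mathcal{T}) \ge n$. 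Since $\mathcal{T}$ was arbitrary, $\tin(K_{n,n}) \ge n$.

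Combining the two bounds gives $\tin(K_{n,n}) = n$, as claimed. There is no real obstacle here: the upper bound is just the trivial decomposition, and the lower bound is an essentially one-line consequence of \cref{lem:closednbr}, exploiting the fact that in a complete bipartite graph every vertex is adjacent to an entire large independent set. The only point requiring a moment's care is to treat both possibilities $v \in A$ and $v \in B$ so that the argument does not secretly assume we may choose the vertex $v$ delivered by \cref{lem:closednbr}.
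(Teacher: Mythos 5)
Your proof is correct. Note that the paper does not actually prove \cref{tin-of-Knn} itself: it imports the statement (together with \cref{lem:closednbr}) from the first paper of the series, so there is no in-paper argument to compare against. Your derivation is a valid, self-contained one using exactly the tools the paper makes available — the observation $\tin(G)\le\alpha(G)$ with $\alpha(K_{n,n})=n$ for the upper bound, and \cref{lem:closednbr} plus the fact that the closed neighborhood of any vertex of $K_{n,n}$ contains an entire part of the bipartition for the lower bound — and it matches the standard proof of this fact.
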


\section{Tree-independence number of \texorpdfstring{$\{K_{1,d},P_s\}$}{\{K\_\{1,d\},Pₛ\}}-free graphs\label{sec:star-path}}

This section contains the following important preliminary result.
\excludePathOriginal*
We will utilize this result in the proof of our main theorem in the next section.

\begin{lemma}\label{lem:an-easy-lemma}
For every graph $G$ and a set $S\subseteq V(G)$,
we have 
\[\tin(G)\le \tin(G-S)+\alpha(G[S])\,.\]
\end{lemma}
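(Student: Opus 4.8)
The plan is to take a tree decomposition of $G-S$ that is optimal with respect to independence number and simply throw the whole set $S$ into every bag, then argue that the result is a valid tree decomposition of $G$ whose independence number is at most $\tin(G-S)+\alpha(G[S])$.

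First I would fix a tree decomposition $\mathcal{T}'=(T,\beta')$ of $G-S$ with $\alpha(\mathcal{T}')=\tin(G-S)$, and define a new pair $\mathcal{T}=(T,\beta)$ on the \emph{same} tree $T$ by setting $\beta(t)=\beta'(t)\cup S$ for every node $t\in V(T)$. Next I would verify that $\mathcal{T}$ is a tree decomposition of $G$. Every vertex of $V(G)\setminus S=V(G-S)$ appears in some bag because $\mathcal{T}'$ covers $G-S$, while every vertex of $S$ appears in all bags. For the edge condition, edges of $G$ with both endpoints in $V(G-S)$ are already covered by $\mathcal{T}'$, and any edge with an endpoint in $S$ is covered by any bag containing its other endpoint, since $S\subseteq\beta(t)$ for all $t$. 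For the connectivity condition, a vertex of $V(G-S)$ occupies exactly the same nodes as in $\mathcal{T}'$, hence a subtree, and a vertex of $S$ occupies all of $T$, which is a subtree as well.

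Finally I would bound the independence number node by node. The key observation is that $\beta'(t)\subseteq V(G)\setminus S$, so $\beta'(t)$ and $S$ are disjoint; consequently any independent set $I$ in $G[\beta(t)]$ partitions as $I=(I\cap\beta'(t))\,\cup\,(I\cap S)$. The first part is an independent set of $G[\beta'(t)]=(G-S)[\beta'(t)]$, hence of size at most $\alpha(\mathcal{T}')=\tin(G-S)$, and the second part is an independent set of $G[S]$, hence of size at most $\alpha(G[S])$. Therefore $\alpha(G[\beta(t)])\le\tin(G-S)+\alpha(G[S])$ for every $t$, giving $\tin(G)\le\alpha(\mathcal{T})\le\tin(G-S)+\alpha(G[S])$.

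There is no real obstacle here: the entire argument is the ``add $S$ to every bag'' construction together with routine verification. The only point worth stating explicitly is the disjointness of $\beta'(t)$ and $S$, which is precisely what makes the split of $I$ additive and yields the clean bound rather than one involving overlaps.
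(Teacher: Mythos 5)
Your proposal is correct and is exactly the paper's argument: take an optimal tree decomposition of $G-S$, add $S$ to every bag, and bound the independence number of each new bag by $\alpha(G[\beta'(t)])+\alpha(G[S])$. The paper merely states the verification steps as "clear," whereas you spell them out; there is no substantive difference.
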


\begin{proof}
Let $G' = G-S$ and $\mathcal{T}' = (T,\beta')$ be a tree decomposition of $G'$ with minimum independence number.
We construct a tree decomposition $\mathcal T = (T,\beta)$ of $G$ from $\mathcal{T}'$ by setting $\beta(t) \coloneqq \beta'(t) \cup S$ for every $t \in V(T)$.
Clearly, for every bag $\beta(t)$, $t\in V(T)$, we have $\alpha(G[\beta(t)]) \leq \alpha(G[\beta'(t)]) + \alpha(G[S])$.
Hence, we obtain that $\tin(G) \leq \alpha(\mathcal{T}) \leq \alpha(\mathcal{T}') + \alpha(G[S]) = \tin(G-S) + \alpha(G[S])$, as claimed.
\end{proof}

We show \cref{thm:excludepathoriginal} by adapting Gyárfás's proof of $\chi$-boundedness of any class of graphs excluding a fixed path as an induced subgraph~ \cite{MR951359}.

\begin{proof}[Proof of \cref{thm:excludepathoriginal}]
Let $G$ be a $K_{1,d}$-free graph.
Assume that $\tin(G)\ge (d-1)(s-2)+1$.
We show that $G$ contains an induced $P_s$.
To this end, we construct a sequence of connected induced subgraphs $G_1,\ldots, G_s$ of $G$ and an induced $P_s$ in $G$ with vertex set $\{v_1,\ldots, v_s\}$ such that for all $i\in [s]$, the following properties hold:
\begin{enumerate}[label=(\roman*)]
\item\label{item1} $v_i\in V(G_i)$ and if $i<s$, then $v_i$ has a neighbor in $G_i$. 
\item\label{item2} For all $j\in [i-1]$ and all $v\in V(G_i)$, the vertices $v$ and $v_j$ are adjacent in $G$ if and only if $j = i-1$ and $v = v_i$.
\item\label{item3} $\tin(G_i)\ge (d-1)(s-i-1)+1$.
\end{enumerate}
Note that property~\ref{item2} for $i = s$ implies that the subgraph of $G$ induced by $\{v_1,\ldots, v_s\}$ is indeed isomorphic to $P_s$. 

Let $G_1$ be a connected component of $G$ such that $\tin(G_1)\ge (d-1)(s-2)+1$ and let $v_1$ be an arbitrary vertex in $G_1$.
Note that $v_1$ has a neighbor in $G_1$ since 
$\tin(G_1)\ge s-1\ge 2$.

Suppose that $i\in [s-1]$ and that we have already defined the graphs $G_1,\ldots, G_i$ and the vertices $v_1,\ldots, v_i$ such that the properties~\ref{item1}--\ref{item3} hold.
We show how to define $G_{i+1}$ and $v_{i+1}$.
We consider two cases depending on the value of $i$.

Consider first the case when $i\le s-2$.
Let $A$ be the set of vertices of $G_i$ adjacent to $v_i$ and let $B = V(G_i)\setminus(A\cup \{v_i\})$.
Since $G$ is $K_{1,d}$-free and $G_i$ is an induced subgraph of $G$, the subgraph of $G_i$ induced by $A\cup \{v_i\}$ has independence number at most $d-1$.
\cref{lem:an-easy-lemma} implies that $\tin(G_i)\le \tin(G_i[B]) + d-1$ and hence
\begin{align*}
\tin(G_i[B]) &\ge \tin(G_i)-(d-1)\\
&\ge (d-1)(s-i-1)+1-(d-1)\\
&\ge (d-1)(s-i-2)+1\,.
\end{align*}
Note that $(d-1)(s-i-2)+1\ge 1$ since $i\le s-2$.
Thus, $B\neq \emptyset$ and there exists a connected component $H$ of the subgraph of $G_i$ induced by $B$ such that
\begin{align}
\tin(H)\ge (d-1)(s-i-2)+1\,.\label{eq:tin_H}
\end{align}
Since $G_i$ is connected, there exists a vertex in $A$ having a neighbor in $H$. 
We define $v_{i+1}$ to be any such vertex and $G_{i+1}$ to be the subgraph of $G_i$ induced by $V(H)\cup \{v_{i+1}\}$.
By construction, the graph $G_{i+1}$ is a connected induced subgraph of $G$.
Let us verify that the properties~\ref{item1}--\ref{item3} hold for $i+1$.
For property~\ref{item1}, we have $v_{i+1}\in V(G_{i+1})$ and $v_{i+1}$ has a neighbor in $V(H)\subseteq V(G_{i+1})$.
For property~\ref{item2}, consider an arbitrary $j\in \{1,\ldots, i\}$ and a vertex $v\in V(G_{i+1})$.
By the definition of $v_{i+1}$, the vertices $v_i$ and $v_{i+1}$ are adjacent in $G$.
Furthermore, since $v\in V(G_{i+1}) = V(H)\cup\{v_{i+1}\}\subseteq A\cup B\subseteq V(G_i)$, property~\ref{item2} for $i$ implies that $v$ is not adjacent to $v_j$ if $j<i$.
Moreover, if $v\in V(G_{i+1})\setminus \{v_{i+1}\}$, then $v\in V(H)\subseteq B$ and consequently $v$ is not adjacent to $v_i$ in $G$.
This establishes property~\ref{item2} for $i+1$.
Property~\ref{item3} for $i+1$ follows from the fact that $H$ is an induced subgraph of $G_{i+1}$ and hence we have $\tin(G_{i+1})\ge \tin(H)\ge (s-i-2)(d-1)+1$, where the second inequality follows from \cref{eq:tin_H}.

To complete the proof, consider the case when $i = s-1$.
By property~\ref{item1}, vertex $v_{s-1}$ has a neighbor in $G_{s-1}$.
Let $v_{s}$ be any such neighbor and let $G_{s}$ be the one-vertex subgraph of $G$ induced by $v_s$.
We need to verify properties~\ref{item1}--\ref{item3} for $i = s$.
Property~\ref{item1} holds trivially.
For property~\ref{item2}, consider an arbitrary $j\in [s-1]$ and a vertex $v\in V(G_{s})$.
Then $v = v_s$.
By the definition of $v_{s}$, the vertices $v_{s-1}$ and $v_{s}$ are adjacent in $G$.
Furthermore, property~\ref{item2} for $i = s-1$ implies that $v_s$ is not adjacent to $v_j$ if $j<s-1$.
This establishes property~\ref{item2} for $i = s$.
Property~\ref{item3} for $i = s$ simplifies to
$\tin(G_{s})\ge 2-d$, which is clearly true.
\end{proof}

\section{Tree-independence number of \texorpdfstring{$\{K_{1,d},S,T\}$}{\{K\_\{1,d\},S,T\}}-free graphs\label{sec:main}}

Recall that we denote by $\mathcal{S}$ the family of graphs every component of which is a tree with at most three leaves, and by $L(\mathcal{S})$ the family of all line graphs of graphs in $\mathcal{S}$.
In this section we prove our main result, \cref{thm:main}, which we restate here for convenience.

\restateMain*

We first consider the case when $S$ and $T$ are connected.
For $p,q,r\ge 1$, let $S_{p,q,r}$ be the graph obtained from the claw by subdividing one edge $p-1$ times, another $q-1$ times, and the last one $r-1$ times.
By $T_{p,q,r}$ we denote the line graph of $S_{p,q,r}$.
Note that any connected graph $S\in \mathcal{S}$ is either a path or isomorphic to $S_{p,q,r}$ for some $p,q,r\ge 1$.
Similarly, any connected graph $T\in L(\mathcal{S})$ is either a path or isomorphic to $T_{p,q,r}$ for some $p,q,r\ge 1$.
For convenience we will write $T_p$ and $S_p$ as shorthands for $T_{p,p,p}$ and $S_{p,p,p}$, respectively.

In the following we aim to prove that any $K_{1,d}$-free graph $G$ which also excludes both $S_p$ and $T_p$ as induced subgraphs has bounded tree independence number.
Our approach is inspired by Lozin and Rautenbach~\cite{MR2112498}.

The core of our proof of \cref{thm:main} is to show it for the case where both $S$ and $T$ are connected.
We sketch the proof for this case.
We may assume that $G$ contains a long induced path $P$ since, otherwise, \cref{thm:excludepathoriginal} would yield a bound on the tree-independence number immediately. 
Note that here we do not necessarily take a longest induced path because we also want to bound the independence number of $G[N[V(P)]]$.

We next show that no component $D$ of $G-N[V(P)]$ contains a long induced cycle. 
This is because if $D$ has a long induced cycle, then by taking a shortest path from $P$ in $G$, we can find an induced $S_p$ or $T_p$. 
Now, if each component $D$ has bounded tree-independence number, then we can merge 
tree decompositions of components by adding a new bag consisting of $N[V(P)]$ and adding $N[V(P)]$ to all bags of previous decompositions. 
This will show that $G$ has bounded tree-independence number. 
Therefore, we may assume by \cref{thm:excludepathoriginal} that there is a component $D$ of $G-N[V(P)]$ having a long induced path. 
We take a longest induced path $Q$ in $D$.

Then, we prove two main lemmas.
First, we show that $D[N_{D}[V(Q)]]$ admits a path decomposition of bounded independence number.
Second, we show that for every component $H$ of $D-N_{D}[V(Q)]$, there is a bag of the path decomposition containing all the neighbors of $H$ in $D$.
The absence of long induced cycles in $D$ is used to show this second lemma.
Because of the maximality of $Q$, we can show that no component $H$ can have a long induced path, and thus it has bounded tree-independence number.
Using the two lemmas, we finally derive that $D$ has bounded tree-independence number.
This completes the proof sketch for the case when $S$ and $T$ are connected.

\medskip

Let $G$ be a graph, $P$ be an induced path in $G$, and $v\in V(G)\setminus V(P)$ be a vertex with at least one neighbor on $P$.
A \emph{segment} of $P$ with respect to $v$ is a maximal subpath $P'$ of $P$ whose interior is disjoint from $N(v)$.
Note that the edge set of $P$ is partitioned into the edge sets of its segments with respect to $v$.
Similarly, for an induced cycle $C$ in $G$ and $v\in V(G)\setminus V(C)$ having at least two neighbors in $C$, a \emph{segment} of $C$ with respect to $v$ is a maximal subpath $P'$ of $C$ whose interior is disjoint from $N(v)$.

\begin{lemma}\label{lemma:fewneighbors}
Let $d$ be a positive integer, let $G$ be a $K_{1,d}$-free graph, let $P$ be an induced path in $G$, and $v\in V(G)\setminus V(P)$.
Then $v$ has at most $2(d-1)$ neighbors on $P$.
\end{lemma}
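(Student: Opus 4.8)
The plan is to use the $K_{1,d}$-free hypothesis in the following equivalent form: for every vertex $u$, the set $N(u)$ contains no independent set of size $d$, i.e. $\alpha(G[N(u)]) \le d-1$ (an independent set of size $d$ inside $N(u)$ would, together with $u$, induce a copy of $K_{1,d}$). Applying this to the vertex $v$ and then restricting attention to those neighbors of $v$ that lie on $P$ will yield the claim, once the structure of these neighbors is understood.

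Concretely, I would write $P = v_1 v_2 \cdots v_\ell$ and set $N \coloneqq N(v) \cap V(P)$, the set of neighbors of $v$ on $P$; the goal is to prove $|N| \le 2(d-1)$. Since $P$ is an \emph{induced} path, the only edges among vertices of $P$ are the path edges $v_i v_{i+1}$, so $G[N]$ is an induced subgraph of $P$ and is therefore a disjoint union of subpaths.

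The key elementary observation is that a disjoint union of paths on $n$ vertices has independence number at least $n/2$: each path component on $m$ vertices has independence number $\lceil m/2 \rceil \ge m/2$, and summing over the components gives $\alpha(G[N]) \ge |N|/2$. On the other hand, the inclusion $N \subseteq N(v)$ together with the $K_{1,d}$-freeness of $G$ gives $\alpha(G[N]) \le \alpha(G[N(v)]) \le d-1$. Combining the two bounds yields $|N|/2 \le d-1$, that is $|N| \le 2(d-1)$, as required.

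I expect no genuine obstacle here: the statement is a short structural observation rather than a deep argument. The only two points deserving care are the translation of the forbidden-induced-subgraph condition into the neighborhood-independence bound $\alpha(G[N(v)]) \le d-1$, and the elementary fact that an induced subgraph of a path splits into subpaths, on each of which the order exceeds the independence number by at most a factor of two.
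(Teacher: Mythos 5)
Your proof is correct and follows essentially the same route as the paper: both arguments extract an independent set of size at least $|N(v)\cap V(P)|/2$ from the neighbors of $v$ on the induced path and then invoke $K_{1,d}$-freeness. The paper gets this independent set via the proper $2$-coloring of $P$ and pigeonhole, while you get it by noting that $G[N(v)\cap V(P)]$ is a disjoint union of subpaths; the difference is cosmetic.
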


\begin{proof}
Let $X\subseteq V(P)$ be any set of vertices of size at least $2d-1$.
As $P$ is an induced bipartite subgraph of $G$, at least $d$ vertices from $X$ belong to the same color class of $P$ and therefore form an independent set in $G$.
Hence $N(v)\cap V(P)$ cannot be larger than $2(d-1)$.
\end{proof}

\begin{lemma}\label{lem:longsegment}
Let $d$ and $p$ be positive integers, let $G$ be a $K_{1,d}$-free graph, let $P$ be an induced path in $G$, and $v\in V(G)\setminus V(P)$. If $P$ has at least $dp$ vertices, then there is a segment of $P$ with respect to $v$ that has at least $p-1$ vertices that are not adjacent to $v$.
\end{lemma}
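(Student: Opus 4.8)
The plan is to run a pigeonhole argument not over the individual neighbors of $v$ on $P$, but over the \emph{maximal runs} of consecutive such neighbors along $P$. Write $N = N(v)\cap V(P)$ and $k = |N|$; by \cref{lemma:fewneighbors} we have $k\le 2(d-1)$. List the vertices of $P$ in order as $u_1,\dots,u_n$ with $n\ge dp$, and partition $N$ into maximal blocks of vertices that are consecutive along $P$; let $r$ be the number of such runs. The first thing I would establish is the bound $r\le d-1$: choosing one vertex from each run produces $r$ vertices of $N$ that are pairwise non-adjacent on $P$, since two vertices taken from distinct runs are separated along $P$ by at least one vertex outside $N$ and hence are non-consecutive on the induced path. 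As all of these vertices lie in $N(v)$ and $G$ is $K_{1,d}$-free, the neighborhood of $v$ contains no independent set of size $d$, forcing $r\le d-1$.

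This run bound is the crux of the argument, and also where I expect the only real difficulty to lie. The naive approach — bounding the number of segments by $k+1\le 2d-1$ and distributing the $n-k$ non-neighbors among them — is \emph{not} strong enough: it only guarantees a segment with roughly $p/2$ non-adjacent vertices, which falls short of $p-1$. What rescues the bound is precisely that the independence-number restriction on $N(v)$ limits the number of runs, and hence the number of \emph{substantial} segments (those containing non-neighbors), to at most $r+1\le d$, rather than $2d-1$.

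With $r\le d-1$ in hand, I would delete $N$ from $P$. Since $N$ is a union of $r$ pairwise disjoint intervals of consecutive vertices, its removal breaks $P$ into at most $r+1\le d$ subpaths, each consisting entirely of vertices not adjacent to $v$; together these subpaths contain all $n-k\ge dp-2(d-1)$ non-neighbors of $v$ on $P$. By the pigeonhole principle one subpath $P'$ satisfies
\[
|V(P')|\ \ge\ \frac{n-k}{r+1}\ \ge\ \frac{dp-2(d-1)}{d}\ =\ p-2+\frac{2}{d}\ >\ p-2,
\]
so, $|V(P')|$ being an integer, $|V(P')|\ge p-1$ (here one uses $d\ge 2$, the cases $d=1$ or $p=1$ being vacuous or trivial). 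Finally I would note that $P'$ is contained in a single segment of $P$ with respect to $v$: adjoining to $P'$ the bounding vertices of $N$ on either side (when they exist) yields a subpath whose interior is exactly $V(P')$ and therefore avoids $N$, so $P'$ lies inside the maximal such subpath, which is by definition a segment. Every vertex of $P'$ is non-adjacent to $v$, so this segment has at least $p-1$ vertices not adjacent to $v$, as required.
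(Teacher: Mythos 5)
Your proof is correct, but it takes a genuinely different route from the paper's. The paper partitions $P$ into $d$ consecutive blocks of $p$ vertices and marks inside each block a window $Q_i$ of $p-1$ vertices, chosen so that the $d$ windows are pairwise anticomplete; if $v$ had a neighbor in every window, one representative per window would yield an induced $K_{1,d}$, so some window is entirely non-adjacent to $v$, and the segment containing that window already has the required $p-1$ non-neighbors. No counting of segments or neighbors is needed. You instead bound the number of maximal runs of neighbors of $v$ along $P$ by $d-1$ (via the same independence obstruction, one representative per run), import the bound $|N(v)\cap V(P)|\le 2(d-1)$ from \cref{lemma:fewneighbors}, and average the at least $dp-2(d-1)$ non-neighbors over the at most $d$ gaps, with the arithmetic $p-2+\tfrac{2}{d}>p-2$ landing exactly on $p-1$. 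Your diagnosis that the naive segment count of $2d-1$ only yields about $p/2$ is accurate for your strategy, and the run-counting is indeed the right fix --- though the paper sidesteps the issue entirely by never counting segments. Both arguments are valid; the paper's is shorter and self-contained, while yours isolates the slightly stronger structural fact that $N(v)\cap V(P)$ occupies at most $d-1$ intervals of $P$.
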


\begin{proof}
Let $P=v_1\dots v_n$ where $n\geq dp$. 
For each $i\in [d]$, let $Q_i=v_{(i-1)p+1}Pv_{ip-1}$. 
Observe that there are no edges between $Q_{i_1}$ and $Q_{i_2}$ for distinct $i_1, i_2\in [d]$. 
If $v$ has a neighbor in $Q_i$ for each $i\in [d]$, then $G$ contains $K_{1,d}$ as an induced subgraph. 
Therefore, there is a $j\in [d]$ such that $v$ has no neighbor in $Q_j$. 
Then, the segment of $P$ containing $Q_j$ has at least  $p-1$ vertices that are not adjacent to $v$.  
\end{proof}

We next show that a connected $\{K_{1,d}, S_p, T_p\}$-free graph cannot have an induced subgraph that is a disjoint union of a long path and a long cycle.

\begin{lemma}\label{lemma:apathskillsallcycles}
Let $d$ and $p$ be positive integers, let $G$ be a connected $\{K_{1,d},S_p,T_p\}$-free graph, and let $P$ be an induced path in $G$ on at least $dp$ vertices.
Then $G-N[V(P)]$ does not contain an induced cycle of length at least $d(2p+2)$.
\end{lemma}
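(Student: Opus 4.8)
The plan is to argue by contradiction: assuming that $G-N[V(P)]$ contains an induced cycle $C$ on at least $d(2p+2)$ vertices, I will build an induced $S_p$ or an induced $T_p$ in $G$, contradicting the hypothesis. Since $G$ is connected, I would first fix a shortest path $R=r_0r_1\cdots r_k$ with $r_0\in V(P)$, $r_k\in V(C)$, and no other vertex in $V(P)\cup V(C)$. As $V(C)\subseteq V(G)\setminus N[V(P)]$, the endpoint $r_k$ is non-adjacent to $V(P)$, so $k\ge 2$. Minimality of $R$ yields the adjacency structure that drives everything: none of $r_0,\dots,r_{k-2}$ has a neighbour on $C$ (else a shorter $(V(P),V(C))$-path), and none of $r_2,\dots,r_{k-1}$ has a neighbour on $P$; hence the only vertex of $R$ interacting with $C$ is $r_{k-1}$, and the only internal vertex of $R$ that may have extra neighbours on $P$ is $r_1$.

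The hub of the construction is $r_{k-1}$. Let $A=N(r_{k-1})\cap V(C)$, so $r_k\in A$. Deleting from $C$ a vertex outside $A$ gives an induced path, so \cref{lemma:fewneighbors} yields $|A|\le 2(d-1)$. The set $A$ splits $C$ into $|A|$ arcs, and I call an arc \emph{long} if it has at least $p$ interior vertices. A short counting argument using $|C|\ge d(2p+2)$ and $|A|\le 2(d-1)$ shows that either there are at least two long arcs, or there is a single long arc with at least $2p$ interior vertices; the case of no long arc is impossible, since then every arc has length at most $p$ and $|C|\le |A|\cdot p\le (2d-2)p<d(2p+2)$. In the two-long-arc case I pick attachment vertices $a_1,a_2\in A$, one on each long arc, and push an arm from the hub $r_{k-1}$ into each arc, staying in its interior and thus avoiding $A$; in the single-giant-arc case I take $a_1,a_2$ to be the two endpoints of that arc and push two arms into its interior from opposite ends. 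Either way each arm has at least $p$ vertices avoiding $N(r_{k-1})$, and the two arms are vertex-disjoint and pairwise non-adjacent because interiors of distinct arcs (resp. the two ends of one long arc) contain no two consecutive vertices of the induced cycle $C$.

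The third arm leaves the hub toward $P$: it begins $r_{k-1},r_{k-2},\dots,r_1$ and is then extended into $P$. Since $P$-vertices are non-adjacent to $C$ and $r_2,\dots,r_{k-1}$ are non-adjacent to $P$, the only possible chords along this arm are between $r_1$ and its $P$-continuation; I would apply \cref{lem:longsegment} to $r_1$ and $P$ (which has at least $dp$ vertices) to obtain a segment of $P$ with at least $p-1$ vertices avoiding $N(r_1)$, and splice it on to get an induced third arm of length at least $p$ that is non-adjacent to everything already built on $C$. The final dichotomy is decided by whether $a_1a_2\in E(C)$: if $a_1,a_2$ are non-adjacent, then the three arms together with the hub form an induced subdivided claw, a copy of $S_p$; if $a_1,a_2$ are adjacent, then $\{r_{k-1},a_1,a_2\}$ is a triangle and the three arms hang off its three vertices, yielding an induced $T_p=L(S_{p,p,p})$. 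The degenerate situations are absorbed by centring on $C$ instead: when $|A|=1$, or when the two long arcs share an endpoint $v$, I centre the claw at the relevant cycle vertex (the unique neighbour $r_k$, respectively $v$) and use the two directions around $C$ as the first two arms; these are non-adjacent because $C$ is induced and long, giving $S_p$.

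I expect the main obstacle to be the simultaneous control required on the cycle side, namely producing two long, clean arms that are mutually non-adjacent. The decisive idea is to centre the claw not on $C$ but at $r_{k-1}$, which is adjacent to \emph{all} of $A$ and can therefore reach into two far-apart long arcs at once; this is precisely what makes the coarse arc-counting (two long arcs versus one giant arc) sufficient, and it is also where the $S_p$-versus-$T_p$ split arises naturally from the adjacency of the two attachment points. Extending the third arm along $P$ past the stray neighbours of $r_1$ is a secondary technicality that \cref{lem:longsegment} handles cleanly.
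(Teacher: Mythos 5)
Your proposal is correct and follows essentially the same route as the paper's proof: a shortest $(V(P),V(C))$-path, its penultimate vertex $r_{k-1}$ (the paper's $z_{r-1}$) as the hub, \cref{lem:longsegment} to clean up the arm going back through the connecting path into $P$, and the $S_p$-versus-$T_p$ dichotomy according to whether the two attachment vertices on $C$ are adjacent. The only cosmetic difference is on the cycle side, where the paper partitions $C$ into $d$ blocks of $2p+1$ vertices to extract a single segment with at least $2p+1$ non-neighbours of the hub (entered from both ends), whereas you count the arcs determined by $N(r_{k-1})\cap V(C)$ and distinguish two long arcs from one giant arc; both yield the same final configurations.
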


\begin{proof}
Towards a contradiction, assume that there exists an induced cycle $C=w_0w_1\dots w_{\ell-1}w_0$ in $G-N[V(P)]$ where $\ell\geq d(2p+2)$.
Let $Q=z_0z_1\dots z_r$ be a shortest $(V(P),V(C))$-path in $G$; in particular, $z_0\in V(P)$ and $z_r\in V(C)$.
Notice that $z_1\notin V(P)\cup V(C)$.
Observe that $C-z_r$ is an induced path on $\ell-1$ vertices.

Since $P$ has at least $dp$ vertices, by \cref{lem:longsegment}, there is a segment $P'$ of $P$ with respect to $z_1$ that has at least $p-1$ vertices that are not adjacent to $z_1$.

Assume first that $z_{r-1}$ has a unique neighbor on $C$, that is, $N(z_{r-1})\cap V(C) =\{z_r\}$.
In this case, let $j\in [\ell]$ such that $z_r = w_j$ and let $R$ be the subpath of $C$ of length $2p$ with $R=w_{j-p}w_{j-p+1}\dots w_{j-1}w_jw_{j+1}\dots w_{j+p}$ (indices modulo $\ell$).
Then, observe that the graph $G[V(P')\cup (V(Q)\setminus\{z_0\})\cup V(R)]$ contains $S_p$ as an induced subgraph, which is a contradiction.

Assume now that $z_{r-1}$ has at least two neighbors on $C$.
For each $i\in [d]$, let $D_i$ be the subpath of $C$ from $w_{(i-1)(2p+2)+1}$ to $w_{i(2p+2)-1}$ that does not contain $w_{i(2p+2)}$  (indices modulo $\ell$). 
Observe that there are no edges between $D_{i_1}$ and $D_{i_2}$ for distinct $i_1, i_2\in [d]$. 
If $z_{r-1}$ has a neighbor in each $D_i$, then $G$ contains $K_{1,d}$ as an induced subgraph. Therefore, there is $j\in [d]$ such that $z_{r-1}$ has no neighbor in $D_j$.
This implies that there is a segment $R$ of $C$ with respect to $z_{r-1}$ that has at least $2p+1$ vertices that are not adjacent to $z_{r-1}$.
Let $R=w_jw_{j+1}\dots w_{j+s}$ (indices modulo $\ell$) be such a segment.
Note that $N(z_{r-1})\cap V(R)=\{w_j,w_{j+s}\}$.
If $w_j$ and $w_{j+s}$ are adjacent in $G$ (that is, if $s = \ell-1$), then the graph $G[V(P')\cup (V(Q)\setminus\{z_0,z_r\})\cup V(R)]$ contains $T_p$ as an induced subgraph, which is a contradiction.
Therefore, $w_j$ and $w_{j+s}$ are nonadjacent.
But then $G[V(P')\cup (V(Q)\setminus\{z_0,z_r\})\cup V(R)]$ contains $S_p$ as an induced subgraph, again a contradiction.
\end{proof}

We next show that, whenever we have a bound on the number of vertices of an induced path $P$ in a $K_{1,d}$-free graph, we also obtain a bound on the independence number of the closed neighborhood of $P$.

\begin{lemma}\label{lemma:boundedalphapath}
Let $d\ge 1$ and $q\geq 2$ be integers, $G$ be a connected $K_{1,d}$-free graph, and let $P$ be an induced path in $G$ on $q$ vertices.
Then $\alpha(G[N[V(P)]])\leq (d-1)q$.
\end{lemma}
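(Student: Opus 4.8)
The plan is to reduce the global bound on $\alpha(G[N[V(P)]])$ to a purely local bound on the independence number of a single closed neighborhood, and then combine the local bounds by subadditivity of cardinality over the cover of $N[V(P)]$ by the closed neighborhoods of the path vertices.

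First I would establish the key local fact: in a $K_{1,d}$-free graph, every closed neighborhood induces a subgraph of independence number at most $d-1$. Indeed, an independent set of size $d$ inside $N(v)$ would, together with $v$, form an induced $K_{1,d}$; hence $\alpha(G[N(v)])\le d-1$. Now any independent set $J$ of $G[N[v]]$ either avoids $v$, in which case $J\subseteq N(v)$ and so $|J|\le d-1$, or contains $v$, in which case $J=\{v\}$ since $v$ dominates $N(v)$; for $d\ge 2$ this still gives $|J|\le d-1$. I would note that the case $d=1$ is vacuous here, since $K_{1,1}$-freeness forbids all edges whereas the path $P$ (having $q\ge 2$ vertices) contains an edge.

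Next I would write $N[V(P)]=\bigcup_{i=1}^{q}N[v_i]$, where $v_1,\dots,v_q$ are the vertices of $P$. Given an arbitrary independent set $I$ of $G[N[V(P)]]$, each intersection $I\cap N[v_i]$ is an independent set of $G[N[v_i]]$, so $|I\cap N[v_i]|\le d-1$ by the local fact. Since every vertex of $I$ lies in at least one $N[v_i]$, we have $I=\bigcup_{i=1}^{q}(I\cap N[v_i])$, and hence
\[
|I|\le \sum_{i=1}^{q}|I\cap N[v_i]|\le (d-1)q\,,
\]
which is exactly the claimed bound.

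The argument is short, and its only genuine content is the local bound $\alpha(G[N[v]])\le d-1$; the main points requiring care are that this bound must be taken on the \emph{closed} (not open) neighborhood, and the handling of the degenerate value $d=1$. I expect no obstacle beyond these, and in particular the path structure of $P$ is not needed for this estimate: the same counting would bound $\alpha(G[N[W]])$ for any vertex set $W$ of size $q$.
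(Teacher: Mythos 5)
Your proof is correct and follows essentially the same route as the paper: bound $|I\cap N[v_i]|$ by $d-1$ for each path vertex $v_i$ (splitting on whether $v_i$ itself lies in the independent set), dispose of the vacuous case $d=1$, and sum over the $q$ closed neighborhoods covering $N[V(P)]$. Your remark that the path structure is irrelevant and the bound holds for any $q$-vertex set $W$ is a valid (if unexploited) observation, but the argument itself matches the paper's.
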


\begin{proof}
First notice that we may assume $d\geq 2$ since otherwise no $P_2$ could exist in $G$.
Let $I$ be a maximum independent set of $G[N[V(P)]]$.
Then, for every $v\in V(P)$, $I$ either contains $v$ or at most $d-1$ vertices of $N(v)$.
As $P$ has $q$ vertices, the claim follows immediately.
\end{proof}

We have seen in \cref{lemma:apathskillsallcycles} that removing the closed neighborhood of an induced path on a specific number of vertices (which is a set of small independence number by \cref{lemma:boundedalphapath}) leaves a graph without long induced cycles.
This implies a powerful separation property for the closed neighborhoods of short subpaths of long induced paths within the remaining graph.
This is expressed in the following two lemmas.

\begin{lemma}\label{lemma:pathinterval1}
Let $d\ge 1$ and $q\ge 3$ be integers, let $G$ be a connected $K_{1,d}$-free graph without induced cycles of length at least $q$, let $P$ be an induced path in $G$, and let $v\in V(G)\setminus V(P)$.
Then there exists a path $P_v\subseteq P$ with at most $2(d-1)(q-2)$ vertices such that $N(v)\cap V(P)\subseteq V(P_v)$ and each endpoint of $P_v$ is adjacent to $v$.
\end{lemma}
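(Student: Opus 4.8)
The plan is to take $P_v$ to be the shortest subpath of $P$ that still contains every neighbor of $v$ on $P$. Concretely, writing $P=v_1\cdots v_n$ and letting $v_{i_1},\dots,v_{i_k}$ with $i_1<\cdots<i_k$ be the neighbors of $v$ on $P$ in the order they appear, I would set $P_v\coloneqq v_{i_1}Pv_{i_k}$. (If $v$ has no neighbor on $P$ the statement is vacuous and one takes $P_v$ empty, so I may assume $k\ge 1$.) With this choice two of the three required properties are immediate: both endpoints $v_{i_1}$ and $v_{i_k}$ are adjacent to $v$ by definition, and since every neighbor of $v$ on $P$ lies between $v_{i_1}$ and $v_{i_k}$, we have $N(v)\cap V(P)\subseteq V(P_v)$. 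It remains only to bound $|V(P_v)|$.

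For the bound I would combine two ingredients. First, \cref{lemma:fewneighbors} gives $k\le 2(d-1)$, so $P_v$ splits into at most $k-1\le 2(d-1)$ segments with respect to $v$, namely the subpaths $v_{i_j}Pv_{i_{j+1}}$ joining consecutive neighbors. Second, and this is the heart of the argument, I would bound the number of vertices of each such segment using the hypothesis that $G$ has no long induced cycle. The key observation is that, because $P$ is an induced path and $v$ has no neighbor strictly between $v_{i_j}$ and $v_{i_{j+1}}$, the vertex $v$ together with the subpath $v_{i_j}Pv_{i_{j+1}}$ forms an \emph{induced} cycle, of length exactly $(i_{j+1}-i_j)+2$. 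Since $G$ contains no induced cycle of length at least $q$, this forces $(i_{j+1}-i_j)+2\le q-1$, so each segment has at most $q-2$ vertices.

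Assembling the two ingredients finishes the proof: $P_v$ is the union of its at most $k-1$ segments, so $|V(P_v)|$ is at most the sum of their vertex counts, which is at most $(k-1)(q-2)\le 2(d-1)(q-2)$; the boundary case $k=1$ gives a single-vertex $P_v$, which also respects the bound. The only places requiring care are the small degenerate cases: when $q=3$ the cycle bound forces $v$ to have at most one neighbor on $P$ (an induced cycle would otherwise have length $3=q$), and when $d=1$ the graph is edgeless; both fit the stated estimate.

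I expect the main obstacle to be the cycle step rather than the counting. The delicate point is verifying that the cycle $v\,v_{i_j}\cdots v_{i_{j+1}}\,v$ really is induced, which uses \emph{both} that $P$ is an induced path (so the segment contributes no chords) and that the open segment avoids $N(v)$ (so $v$ sees only the two endpoints), and then translating ``no induced cycle of length $\ge q$'' cleanly into the per-segment vertex bound $q-2$. Once this local-cycle estimate is in place, everything else is routine bookkeeping via \cref{lemma:fewneighbors}.
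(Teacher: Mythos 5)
Your proposal is correct and follows essentially the same route as the paper: take $P_v$ to be the shortest subpath of $P$ containing $N(v)\cap V(P)$, bound the number of neighbors via \cref{lemma:fewneighbors}, and bound each segment between consecutive neighbors by observing that it closes up with $v$ into an induced cycle, which must have fewer than $q$ vertices. Your final count $(k-1)(q-2)\le 2(d-1)(q-2)$ is slightly looser than the paper's $(2d-3)(q-3)+1$ but still yields the stated bound.
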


\begin{proof}
Notice that if $d=1$, then $G$ is edgeless and thus we can select $P_v$ to be the empty path.
Hence, we assume that $d\geq 2$.
By \cref{lemma:fewneighbors}, $v$ has at most $2(d-1)$ neighbors on $P$.
Let $P_v$ be the shortest subpath of $P$ containing all neighbors of $v$ on $P$. By the minimality of $P_v$, each endpoint of $P_v$ is adjacent to $v$.

If $v$ has only one neighbor in $P$, then $P_v$ has only one vertex and we are done. 
So we may assume that $v$ has at least two neighbors on $P$.
Suppose there are $u,w\in V(P_v)\cap N(v)$ such that $Q\coloneqq uP_vw$ has at least $q-1$ vertices and no internal vertex of $Q$ is a neighbor of $v$.
Then $vuQwv$ is an induced cycle of length at least $q$ in $G$, which is a contradiction.
Hence, $P_v$ consists of at most $2d-3$ segments of length at most $q-3$ each, where no internal vertex is adjacent to $v$.
It follows that $P_v$ has at most $(2d-3)(q-3)+1\le 2(d-1)(q-2)$ vertices.
\end{proof}

In the next step we show that the observation from \cref{lemma:pathinterval1} may be extended to entire components of $G-N[V(P)]$ if $G$ does not contain long induced cycles.

\begin{lemma}\label{lemma:componentattachment}
Let $d\ge 1$ and $q\ge 3$ be integers, let $G$ be a connected $K_{1,d}$-free graph without induced cycles of length at least $q$, and let $P$ be an induced path in $G$.
Let $H$ be a component of $G-N[V(P)]$ and let $v\in N(V(H))\cap N(V(P))$.
Then there exists a path $P_H\subseteq P$ on at most $2(d-1)(q-2)+2q$ vertices such that $N(V(H))\subseteq N(V(P_H))$.
\end{lemma}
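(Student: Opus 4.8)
The plan is to apply \cref{lemma:pathinterval1} to the given vertex $v$ to obtain a short ``base'' subpath of $P$, and then take $P_H$ to be this base extended by $q$ vertices in each direction along $P$. First I would record a structural observation: since $H$ is a component of $G-N[V(P)]$, no vertex of $H$ has a neighbour on $P$, and consequently every $u\in N(V(H))$ lies in $N(V(P))\setminus V(P)$; in particular each such $u$ has at least one neighbour on $P$. (A vertex adjacent to $H$ lying outside $N[V(P)]$ would belong to the same component $H$; and a vertex of $P$ adjacent to $H$ would force a vertex of $H$ into $N(V(P))$, a contradiction.) I may assume $d\ge 2$, since for $d=1$ the graph $G$ is edgeless and the statement is vacuous.

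Applying \cref{lemma:pathinterval1} to $v$ yields a subpath $P_v=p_i\dots p_j$ of $P$ with at most $2(d-1)(q-2)$ vertices such that $N(v)\cap V(P)\subseteq V(P_v)$ and both endpoints $p_i,p_j$ are adjacent to $v$. I would then define $P_H$ to be the subpath of $P$ obtained from $P_v$ by appending up to $q$ consecutive vertices of $P$ on each side (clamped to the ends of $P$). By construction $|V(P_H)|\le 2(d-1)(q-2)+2q$, so it remains only to prove that $N(V(H))\subseteq N(V(P_H))$, that is, that every $u\in N(V(H))$ has a neighbour in $P_H$.

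This last claim is the heart of the argument, and I expect it to be the main obstacle. I would prove it by contradiction: suppose some $u\in N(V(H))$ has no neighbour in $P_H$. Since $u$ has a neighbour on $P$ but none in $P_H$, its neighbour on $P$ closest to $P_v$, say $p_k$, lies strictly beyond the extension; by symmetry assume it lies to the right, so $k\ge j+q+1$ and $u$ has no neighbour among $p_j,p_{j+1},\dots,p_{k-1}$. Because $u$ and $v$ are both adjacent to the connected graph $H$, there is a shortest $u$--$v$ path $W$ in $G[V(H)\cup\{u,v\}]$; such a path is induced, and its internal vertices lie in $H$ and hence have no neighbour on $P$. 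I would then exhibit the cycle that runs along $P$ from $p_j$ to $p_k$, then along the edge $p_k u$, then along $W$ from $u$ to $v$, and finally along the edge $v p_j$.

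The crux is verifying that this cycle is induced and long enough to contradict the hypothesis that $G$ has no induced cycle of length at least $q$. The subpath $p_j\dots p_k$ is chordless; the internal vertices of $W$ have no neighbours on $P$ and form an induced path together with $u$ and $v$; the vertex $v$ meets the $P$-part only at $p_j$, since $N(v)\cap V(P)\subseteq V(P_v)=\{p_i,\dots,p_j\}$; and $u$ meets the $P$-part only at $p_k$, by the choice of $p_k$ as the neighbour of $u$ closest to $P_v$. Hence the cycle is induced, and it has at least $(k-j+1)+2\ge q+4$ vertices, the required contradiction. The left-hand case is symmetric, using the endpoint $p_i$ in place of $p_j$. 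This establishes the claim, and with it both $N(V(H))\subseteq N(V(P_H))$ and $|V(P_H)|\le 2(d-1)(q-2)+2q$.
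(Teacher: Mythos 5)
Your proposal is correct and follows essentially the same route as the paper's proof: apply \cref{lemma:pathinterval1} to $v$, extend the resulting subpath $P_v$ by up to $q$ vertices on each side to form $P_H$, and then, for a hypothetical $u\in N(V(H))\setminus N(V(P_H))$, build an induced cycle of length at least $q$ from the stretch of $P$ between $P_v$ and the nearest neighbour of $u$, together with a shortest $(u,v)$-path through $H$. Your verification that this cycle is induced (via the choice of $p_k$ closest to $P_v$, the containment $N(v)\cap V(P)\subseteq V(P_v)$, and the fact that internal vertices of $W$ lie in $H$) matches the paper's argument, with the minor cosmetic difference that the paper phrases the length bound as the cycle containing the whole extension $P^1$ rather than counting $k-j+1$ directly.
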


\begin{proof}
By \cref{lemma:pathinterval1}, there exists a nonempty path $P_v\subseteq P$ on at most $2(d-1)(q-2)$ vertices such that $N(v)\cap V(P)\subseteq V(P_v)$ and each endpoint of $P_v$ is adjacent to $v$.
Let $p_1$ and $p_2$ be the two endpoints of $P_v$ (possibly $p_1 = p_2)$ and let $P_1'$ and $P_2'$ be two subpaths of $P$ such that the paths  $P_1'$, $P_v$, $P_2'$ are pairwise edge-disjoint and with union $P$, and $p_i\in V(P_i')$ for $i\in \{1,2\}$.

For each $i\in[2]$, if $P_i'$ has at most $q$ vertices let $P^i\coloneqq P_i'$, otherwise let $P^i$ be the subpath of $P_i'$ on $q+1$ vertices that contains $p_i$.
Let $P_H\coloneqq P^1p_1P_vp_2P^2$.
Notice that $P_H$ has at most $2(d-1)(q-2)+2q$ vertices.

It remains to show that $N(V(H))\subseteq N(V(P_H))$.
Suppose, for a contradiction, that there exists a vertex $u\in N(V(H))\setminus N(V(P_H))$.
Note that the vertex $u$, having a neighbor in $H$, cannot belong to $P_H$, hence $u\in N(V(H))\setminus N[V(P_H)]$.
Furthermore, since $H$ is a component of $G-N[V(P)]$ and $G$ is connected, $N(V(H))\subseteq N(V(P))$ and hence $u$ has a neighbor on $P$.
Since $u$ has no neighbors on $P_H$, we may assume without loss of generality that $u$ has a neighbor on $P_1'$.
Let $w$ be the neighbor of $u$ on $P_1'$ closest to $p_1$ along $P$.
The path $P^1$ consists of $q+1$ vertices, exactly one of which, namely $p_1$, is adjacent to $v$.
Let $R$ be a shortest $(u,v)$-path in $G[V(H)\cup\{u,v\}]$.
Then $vRuwP_1'p_1v$ is an induced cycle in $G$ that contains $P^1$ and thus has at least $q$ vertices, a contradiction.
\end{proof}

The previous results imply that, in the absence of long induced cycles, if a long induced path $P$ exists within $G$, then $G$ may be decomposed in a path-like fashion following the structure induced by the separator properties of the closed neighborhoods of the subpaths of $P$.
The next step is to start formalizing this intuition by building a path decomposition of $G[N[V(P)]]$ with bounded independence number, given some induced path $P$ as input.
The following two lemmas provide the last remaining tools to prove the key result of this section.

Let $G$ be a graph, $h\geq 1$ be an integer, and $P=v_1\dots v_{\ell}$ be an induced path on $\ell \geq h$ vertices in $G$.
Let $n\coloneqq \ell-h+1$ and let $B=b_1b_2\dots b_n$ be a path on $n$ vertices.
For each $i\in[n]$ let $P^i\coloneqq v_iv_{i+1}\dots v_{i+h-1}$ and set $\beta(b_i)\coloneqq N_G[V(P^i)]$.
We call the pair $(B,\beta)$ the \emph{$h$-backbone structure} of $P$ in $G$.

\begin{lemma}\label{lemma:backbone}
Let $d \geq 2$ and $q\geq 3$ be integers. 
Let $G$ be a connected $K_{1,d}$-free graph without induced cycles of length at least $q$. 
Moreover, let $h\geq q-1$ be an integer, let $P$ be an induced path on at least $h$ vertices in $G$, and let $(B,\beta)$ be the $h$-backbone structure of $P$ in $G$.
Then $(B,\beta)$ is a path decomposition of $G[N[V(P)]]$ with independence number at most $(d-1)h$.
\end{lemma}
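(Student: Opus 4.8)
The plan is to verify directly that $(B,\beta)$ satisfies the three defining properties of a path decomposition of $G' \coloneqq G[N[V(P)]]$ — every vertex lies in some bag, every edge is contained in a bag, and the bags containing any fixed vertex form a subpath of $B$ — and then to bound $\alpha(G[\beta(b_i)])$ for each $i$. Throughout I write $P = v_1 \cdots v_\ell$, $P^i = v_i \cdots v_{i+h-1}$, and for a vertex $u$ I let $N_P(u) = \{j : v_j \in N(u)\}$ denote its set of neighbor-indices on $P$. Since the windows $V(P^1), \ldots, V(P^n)$ together cover all of $V(P)$, the union of the bags $\bigcup_i N_G[V(P^i)]$ equals $N_G[V(P)] = V(G')$, so vertex-coverage is immediate; note also that each bag is a subset of $V(G')$, because the $G$-neighbors of path vertices all lie in $N[V(P)]$.

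For edge-coverage, first observe that any edge incident to a path vertex $v_j$ is easy: choosing any window $P^i$ with $v_j \in V(P^i)$ (which exists since $h \geq 2$), the bag $\beta(b_i) = N_G[V(P^i)]$ contains $v_j$ together with its entire closed neighborhood $N_G[v_j]$, hence both endpoints of the edge. The only substantive case is an edge $xy$ with $x, y \notin V(P)$; here both $x$ and $y$ have neighbors on $P$, and it suffices to produce $j_x \in N_P(x)$ and $j_y \in N_P(y)$ with $|j_x - j_y| \leq h-1$, since then some window of $h$ consecutive indices contains both $j_x$ and $j_y$, and the corresponding bag contains $x$ and $y$. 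I would choose $j_x, j_y$ minimizing $|j_x - j_y|$ and argue by contradiction: if $|j_x - j_y| \geq q - 1$, say $j_x < j_y$, then minimality forces that neither $x$ nor $y$ has a neighbor strictly between $v_{j_x}$ and $v_{j_y}$, so $x\, v_{j_x} v_{j_x+1} \cdots v_{j_y}\, y\, x$ is an induced cycle on $j_y - j_x + 3 \geq q + 2$ vertices, contradicting the hypothesis that $G$ has no induced cycle of length at least $q$. Thus $|j_x - j_y| \leq q - 2 \leq h - 1$, as required.

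For the interval property, path vertices are handled directly: since $P$ is induced, $v_j \in \beta(b_i)$ exactly when the window $[i, i+h-1]$ meets $\{j-1, j, j+1\}$, which is an interval condition on $i$. For a vertex $u \notin V(P)$ the bags containing $u$ are indexed by $\{i : [i,i+h-1] \cap N_P(u) \neq \emptyset\} = [1,n] \cap \bigcup_{j \in N_P(u)} [j-h+1, j]$, a union of length-$h$ intervals, and this union is contiguous provided consecutive elements of $N_P(u)$ differ by at most $h$. The exclusion of long induced cycles supplies exactly this bound: if $\alpha < \beta$ are consecutive neighbor-indices of $u$ (no neighbor of $u$ strictly between them), then $u\, v_\alpha v_{\alpha+1} \cdots v_\beta\, u$ is an induced cycle on $\beta - \alpha + 2$ vertices, whence $\beta - \alpha \leq q - 3 \leq h$. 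Hence the bags containing $u$ form a subpath of $B$, completing the verification that $(B,\beta)$ is a path decomposition.

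Finally, for the independence number, each $P^i$ is an induced path on exactly $h \geq 2$ vertices, so \cref{lemma:boundedalphapath}, applied with its path-length parameter equal to $h$, gives $\alpha(G[\beta(b_i)]) = \alpha(G[N_G[V(P^i)]]) \leq (d-1)h$; taking the maximum over $i$ yields $\alpha(B,\beta) \leq (d-1)h$. I expect the main obstacle to be the two places where vertices outside $P$ interact with the decomposition — edge-coverage for edges with both endpoints outside $P$, and the interval property for such vertices — since both require converting combinatorial ``closeness'' of neighbor-indices into an induced cycle and then invoking the exclusion of long induced cycles (this is precisely where the hypothesis $h \geq q-1$ is used); everything else is bookkeeping together with a single application of \cref{lemma:boundedalphapath}.
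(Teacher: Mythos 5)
Your proof is correct and follows essentially the same strategy as the paper's: verify the three path-decomposition axioms directly, using the absence of induced cycles of length at least $q$ (together with $h\ge q-1$) to confine the relevant neighbor-indices on $P$ to a single window, and then invoke \cref{lemma:boundedalphapath} for the independence bound. Your handling of edge-coverage and the interval property via closest/consecutive neighbor-indices is a slightly more direct packaging of what the paper does through \cref{lemma:pathinterval1} and a minimal-counterexample argument, but the underlying mechanism is identical.
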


\begin{proof}
Let $P=v_1\dots v_{\ell}$ and $B=b_1\dots b_n$.
For each $i\in[n]$, let $P^i$ be the subpath of $P$ from the construction of $(B,\beta)$.
We begin by showing that $(B,\beta)$ is indeed a path decomposition of $G'\coloneqq G[N[V(P)]]$.

To see this, first observe that for every $i\in[n]$ we have $v_i\in\beta(b_i)$ and for every $i\in[\ell]\setminus[n]$ we have $v_i\in\beta(b_n)$.
Moreover, if $v_i\in\beta(b_j)$, then $N(v_i)\subseteq\beta(b_j)$ as well.
Hence $\bigcup_{j\in[n]}\beta(b_j)=N[V(P)]$.

Next let $v_iv_{i+1}\in E(P)$. 
Then $v_i,v_{i+1}\in\beta(b_i)$ if $i\in[n]$ and $v_i,v_{i+1}\in\beta(b_n)$ otherwise.
A similar observation can be made for edges of the form $v_iu$ where $u\in N(V(P))$.
Now, let $u,w$ be two adjacent vertices from $N(V(P))$.
Then we may consider the subpaths $P_u$ and $P_w$ of $P$ from \cref{lemma:pathinterval1} together with the shortest $(V(P_u),V(P_w))$-subpath $Q$ of $P$.
In the case $Q$ contains at least $q-2$ vertices, $uQwu$ is an induced cycle of length at least $q$ in $G$, which is impossible.
Hence, $Q$ has at most $q-3$ vertices.
Let $v_i$ be the vertex of $Q$ minimizing $i$. 
Then, as $h\geq q-1$, it holds that $V(Q)\subseteq \beta(b_{\min\{ i,n\}})$.
Consequently $u,w\in \beta(b_{\min\{ i,n\}})$ as well, and thus every edge of $G'$ is contained in some bag of $(B,\beta)$.

Finally, suppose for a contradiction that there exists a vertex $u\in N[V(P)]$ together with $1\leq i<j<k\le n$ such that $u\in(\beta(b_i)\cap \beta(b_k))\setminus\beta(b_j)$.
Assume that $i$ and $k$ are chosen so that $k-i$ is minimal.
Notice that $u\in N(V(P))$ since the above situation is impossible for the vertices of $P$.
Let $p_i$ be the vertex of $P^i$ that is adjacent to $u$ and closest to $P^k$ on $P$.
Similarly, let $p_k$ be the vertex of $P^k$ that is adjacent to $u$ and closest to $P^i$ along $P$.
Clearly, $\{p_i,p_k\}\cap V(P^j)=\emptyset$ and thus the path $R\coloneqq p_iPp_k$, which contains $P_j$, has at least $h+2\ge q+1$ vertices.
Since $u$ has no neighbors in the interior of $R$, it follows that  $up_iPp_ku$ is an induced cycle of length at least $q$.  
This contradicts the assumption that $G$ has no induced cycle of length at least $q$.
Hence, $(B,\beta)$ is indeed a path decomposition of $G'$.

For every $b\in V(B)$, the fact that $\alpha(G'[\beta(b)])\le (d-1)h$ follows immediately from \cref{lemma:boundedalphapath}, and thus our proof is complete.
\end{proof}

\begin{lemma}\label{cor:attachments}
Let $d\geq 1$ and $q \geq 3$ be integers.
Let $G$ be a connected $K_{1,d}$-free graph without induced cycles of length at least $q$.
Moreover, let $h\geq 2(d-1)(q-2)+2q$ be an integer, let $P$ be an induced path on at least $h$ vertices in $G$, and let $(B,\beta)$ be the $h$-backbone structure of $P$ in $G$.
Then, for every component $H$ of $G-N[V(P)]$ there exists some $b_H\in V(B)$ such that $N(V(H))\subseteq \beta(b_H)$.
\end{lemma}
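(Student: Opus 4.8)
The plan is to derive this as a short consequence of \cref{lemma:componentattachment} together with a simple indexing observation about the backbone windows. First I would dispose of the degenerate case: if $d=1$ then $G$ is edgeless, so no induced path on $h\ge 2q\ge 6$ vertices exists and the statement is vacuous; hence I may assume $d\ge 2$. Since $G$ is connected and $H$ is a component of $G-N[V(P)]$ disjoint from the nonempty path $P$, every vertex adjacent to $V(H)$ lies in $N[V(P)]$ and cannot lie on $P$ itself (a neighbor of $H$ on $P$ would place a vertex of $H$ into $N(V(P))$, contradicting $V(H)\subseteq V(G)\setminus N[V(P)]$). Thus $N(V(H))\subseteq N(V(P))$, and in particular there is a vertex $v\in N(V(H))\cap N(V(P))$.

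Next I would apply \cref{lemma:componentattachment} to $H$ and this vertex $v$. This produces a subpath $P_H\subseteq P$ with at most $2(d-1)(q-2)+2q$ vertices satisfying $N(V(H))\subseteq N(V(P_H))$. The crucial structural point, visible from the construction $P_H=P^1p_1P_vp_2P^2$ in the proof of \cref{lemma:componentattachment}, is that $P_H$ is a \emph{contiguous} subpath of $P$; writing $V(P_H)=\{v_a,v_{a+1},\dots,v_b\}$, the hypothesis $h\ge 2(d-1)(q-2)+2q$ gives $b-a+1\le h$.

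The heart of the argument is to show that such a short contiguous subpath fits inside one backbone window $P^i=v_i\cdots v_{i+h-1}$. Writing $P=v_1\cdots v_\ell$ and $n=\ell-h+1$, I would take $i\coloneqq\min\{a,\ell-h+1\}$. Then $1\le i\le n$ (using $\ell\ge h$), and one checks directly that $i\le a$ and $b\le i+h-1$: if $a\le \ell-h+1$ then $i=a$ and $b\le a+h-1=i+h-1$ since $b-a\le h-1$, while otherwise $i=\ell-h+1$ and $i+h-1=\ell\ge b$. Hence $V(P_H)\subseteq V(P^i)$.

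Finally I would conclude by monotonicity of neighborhoods: from $V(P_H)\subseteq V(P^i)$ we get $N(V(P_H))\subseteq N[V(P^i)]=\beta(b_i)$, whence $N(V(H))\subseteq N(V(P_H))\subseteq\beta(b_i)$, and setting $b_H\coloneqq b_i$ completes the proof. I do not anticipate a genuine obstacle here, since the bulk of the work is carried by \cref{lemma:componentattachment}; the only point requiring care is the boundary case of the window-fitting step, where $P_H$ lies near the far end of $P$ and the naive window starting at $v_a$ would run past $v_\ell$, and the choice $i=\min\{a,\ell-h+1\}$ handles this cleanly.
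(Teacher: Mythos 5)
Your proposal is correct and follows essentially the same route as the paper's proof: invoke \cref{lemma:componentattachment} to obtain the short contiguous subpath $P_H$ with $N(V(H))\subseteq N(V(P_H))$, then observe that since $|V(P_H)|\le h$ some backbone window $P^i$ contains $P_H$, so $N(V(H))\subseteq N[V(P^i)]=\beta(b_i)$. The paper states the window-fitting step without the explicit index computation; your added care with the boundary case and with verifying $N(V(H))\cap N(V(P))\neq\emptyset$ is sound but not a different argument.
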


\begin{proof}
Let $B=b_1b_2\dots b_n$.
By \cref{lemma:componentattachment} there exists a path $P_H\subseteq P$ on at most $2(d-1)(q-2)+2q$ vertices such that $N(V(H))\subseteq N(V(P_H))$.
Moreover, since $h\geq 2(d-1)(q-2)+2q$, there exists some $i\in[n]$ such that the path $P^i$ used for the construction of $(B,\beta)$ contains $P_H$ as a subgraph.
Hence, $N(V(H))\subseteq \beta(b_i)$ and we can take $b_H = b_i$.
\end{proof}

We are now ready to prove the special case of \cref{thm:main} where $S$ and $T$ are connected.

\begin{theorem}\label{thm:excludelongclaw}
Let $d\ge 2$ and $p\ge 1$ be integers and let $G$ be a $\{K_{1,d},S_p,T_p\}$-free graph.
Then $\mathsf{tree}\text{-}\alpha(G)\leq 20(d-1)^4(p+1)$.
\end{theorem}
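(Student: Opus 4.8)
The plan is to prove the bound by two nested applications of the same ``peel off the closed neighbourhood of an induced path'' idea, reducing at each stage to a graph with no long induced cycle and then to components on which \cref{thm:excludepathoriginal} applies. Since the tree-independence number of a graph is the maximum of $\tin$ over its connected components (disjoint tree decompositions being joinable by an edge), I may assume $G$ is connected. Set $q := d(2p+2)$ and $h := 2(d-1)(q-2)+2q$, so that $q\ge 3$, $h\ge q-1$, and $h\ge 2(d-1)(q-2)+2q$, making \cref{lemma:apathskillsallcycles,lemma:backbone,cor:attachments} all available. If $G$ has no induced path on $dp$ vertices, then $G$ is $P_{dp}$-free and \cref{thm:excludepathoriginal} already gives $\tin(G)\le (d-1)(dp-2)$, well within the target. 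Otherwise I fix an induced path $P$ on exactly $dp$ vertices. By \cref{lemma:boundedalphapath}, $\alpha(G[N[V(P)]])\le (d-1)dp$, and by \cref{lemma:apathskillsallcycles}, $G-N[V(P)]$ has no induced cycle of length at least $q$. Combining \cref{lem:an-easy-lemma} with these facts gives
\[ \tin(G)\le \max_D \tin(D) + (d-1)dp, \]
the maximum being over the connected components $D$ of $G-N[V(P)]$, each of which is a connected $\{K_{1,d},S_p,T_p\}$-free graph with no induced cycle of length at least $q$. It therefore suffices to bound $\tin(D)$ for one such $D$.

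Next I repeat the idea one level down, now using that $D$ has no long induced cycle. If $D$ has no induced path on $h$ vertices, \cref{thm:excludepathoriginal} gives $\tin(D)\le (d-1)(h-2)$ and we are done. Otherwise I fix a \emph{longest} induced path $Q$ in $D$ (it has at least $h$ vertices) and let $(B,\beta)$ be its $h$-backbone structure. By \cref{lemma:backbone}, $(B,\beta)$ is a path decomposition of $D[N_D[V(Q)]]$ of independence number at most $(d-1)h$, and by \cref{cor:attachments}, for every component $H$ of $D-N_D[V(Q)]$ there is a node $b_H$ with $N_D(V(H))\subseteq \beta(b_H)$. I will build a tree decomposition of $D$ by hanging, at each node $b_H$, a tree decomposition of $H$ in which $N_D(V(H))$ has been added to every bag; validity follows because $N_D(V(H))\subseteq \beta(b_H)$ and $H$ meets no vertex of $Q$ and no other component. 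Since $N_D(V(H))\subseteq \beta(b_H)$ forces $\alpha(D[N_D(V(H))])\le (d-1)h$, this yields
\[ \tin(D)\le \max_H \tin(H) + (d-1)h, \]
so everything reduces to bounding $\tin(H)$ for the components $H$ of $D-N_D[V(Q)]$.

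The crux, and the step I expect to be the main obstacle, is to show that each $H$ has no induced path on $h'$ vertices for an explicit constant $h'=O(dh)$; \cref{thm:excludepathoriginal} then gives $\tin(H)\le (d-1)(h'-2)$. Suppose for contradiction that $H$ contains a long induced path $Q''$, and pick a connector $w\in N_D(V(H))$. By \cref{cor:attachments}, $w$ is adjacent to the window $Q^i=v_i\dots v_{i+h-1}$ underlying $b_H$, and by \cref{lemma:fewneighbors,lemma:pathinterval1} its neighbours on $Q$ lie in a subpath $Q_w$ on at most $2(d-1)(q-2)$ vertices, with extreme neighbours $v_{j^*}$ and $v_{j^{**}}$; applying \cref{lem:longsegment} to $Q''$ with respect to $w$, a sufficiently long $Q''$ yields a clean subpath of $H$ of length at least $p$ attached to $w$ and avoiding $N(w)$ in its interior, which will play the role of a third ``leg''. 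I then distinguish two cases according to the two tails of $Q$ flanking $Q^i$. If both tails are long (the window sits away from the ends of $Q$), then the two tails running outward from $v_{j^*}$ and $v_{j^{**}}$ together with the clean $H$-leg form an induced subdivided claw, i.e.\ an induced $S_p$, contradicting $S_p$-freeness; cleanness of the legs uses that $v_{j^*},v_{j^{**}}$ are the extreme $Q$-neighbours of $w$ and that $H$ has no neighbour on $Q$. If instead one tail is short, then the other is long, and concatenating it with $w$ and the clean $H$-leg produces an induced path strictly longer than $Q$, contradicting the maximality of $Q$. Both cases yield a contradiction once $|V(Q'')|$ exceeds a threshold of order $d(p+h)$, which gives the constant $h'$. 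The genuinely delicate part is extracting legs that are simultaneously long, induced, and disjoint from the (bounded) neighbourhoods of $w$ on $Q$ and in $H$, which is exactly what \cref{lemma:fewneighbors,lem:longsegment,lemma:pathinterval1} are designed to supply.

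Finally, assembling the three estimates gives
\[ \tin(G)\le (d-1)(h'-2)+(d-1)h+(d-1)dp, \]
and substituting $q=d(2p+2)$, $h=2(d-1)(q-2)+2q$, and the explicit $h'=O(dh)$, together with the elementary inequality $d\le 2(d-1)$ valid for $d\ge 2$, produces a bound of the form $c\,(d-1)^4(p+1)$; tracking the constants carefully yields $c\le 20$. The cases where $G$ or $D$ has only short induced paths are absorbed into the same estimate, since the corresponding bounds from \cref{thm:excludepathoriginal} are of strictly lower order in $d$. This completes the proof plan, the only subtle ingredient being the constant-length bound on induced paths inside the components $H$, where $S_p$-freeness and the maximality of $Q$ are played against each other.
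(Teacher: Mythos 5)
Your proposal follows essentially the same two-level strategy as the paper's proof: peel off $N[V(P)]$ of a first induced path to eliminate long induced cycles, build the $h$-backbone path decomposition of a longest induced path in each remaining component, and show that the components hanging off it admit no long induced paths by playing $S_p$/$T_p$-freeness against the maximality of that path, finishing each leaf case with \cref{thm:excludepathoriginal}. The only sketch-level slip is in your both-tails-long case, where the two extreme $Q$-neighbours of the connector may be adjacent, in which case one obtains an induced $T_p$ rather than $S_p$ --- which is exactly why the paper invokes $T_p$-freeness there as well.
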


\begin{proof}
First assume $d=2$.
Then $G$ is $P_3$-free and thus $G$ is a disjoint union of complete graphs, which implies that $\mathsf{tree}\text{-}\alpha(G)\leq 1$. 
Hence, from now on we assume that $d\ge 3$.
We may also assume that $G$ is connected.

Let $q\coloneqq 2d(p+1)$ and $r\coloneqq 2(d-1)(q-2)$.
Notice that, in the case $G_0\coloneqq G$ does not contain an induced $P_{dp}$, we are done by \cref{thm:excludepathoriginal}.
Hence, there exists an induced path $P^0$ of $G_0$ on $dp$ vertices.
Let $X_0\coloneqq N_{G_0}[V(P^0)]$.

By \cref{lemma:apathskillsallcycles}, it follows that $G_1\coloneqq G_0-X_0$ does not have induced cycles of length at least~$q$.
Moreover, by \cref{lemma:boundedalphapath}, we know that $\alpha(G_0[X_0])\leq (d-1)dp$.
Our goal is to construct a tree decomposition of $G$ with independence number at most $20(d-1)^4(p+1)$.
It suffices to do this for the case when $G_1$ is connected.
In the case $G_1$ is not connected, the following arguments can be applied to each component of $G_1$ individually and the tree decompositions obtained may be joined by introducing an additional node $t$ whose bag consists exactly of the set $X_0$ and which is joined to exactly one node of each of the trees for the tree decompositions of the components.

In the case $G_1$ does not contain an induced $P_{6dq}$ we may, again, call upon \cref{thm:excludepathoriginal} to obtain a bound on $\tin(G_1)$.
By adding the vertices in $X_0$ to every bag of a tree decomposition of $G_1$ with minimum independence number (cf.~\cref{lem:an-easy-lemma}), we obtain a tree decomposition of $G$ with independence number at most 
\begin{align*}
 &\phantom{=~} 6(d-1)dq+(d-1)dp \\
 &= 12(d-1)d^2(p+1)+(d-1)dp \\
&\le (d-1)(p+1)\cdot d\cdot (12d+1) \\
 &\le (d-1)(p+1)\cdot 2(d-1)\cdot 10(d-1)^2=20(d-1)^4(p+1),
\end{align*}
for $G_0$ and are done.
So we may take $P^1=s_1 \dots s_m$ to be a longest induced path in $G_1$ and assume $m\geq 6dq$. 

Let $H$ be some component of $G_1-N_{G_1}[V(P^1)]$.
We claim that $H$ does not contain an induced path on $d(r+p-1)$ vertices.
Towards a contradiction let $F$ be an induced path on $d(r+p-1)$ vertices in $H$.
Moreover, let $Q=w_0w_1\dots w_{\ell}$ be a shortest $(V(P^1),V(F))$-path in $G_1$ with $w_0\in V(P^1)$.
Notice that $w_1\notin V(P^1)\cup V(F)$.
By \cref{lemma:pathinterval1} applied to $G_1$, $P^1$, and $w_1$, there exists a path $P_{w_1}\subseteq P^1$ with at most $r$ vertices such that $N_{G_1}(w_1)\cap V(P^1)\subseteq V(P_{w_1})$ and each endpoint of $P_{w_1}$ is adjacent to $w_1$.
Let $u$ be the neighbor of $w_1$ on $P^1$ that minimizes the distance to $s_1$ on $P^1$, and let $z$ be the neighbor of $w_1$ on $P^1$ that minimizes the distance to $s_m$ on $P^1$. See \cref{fig:theorem49} for an illustration.

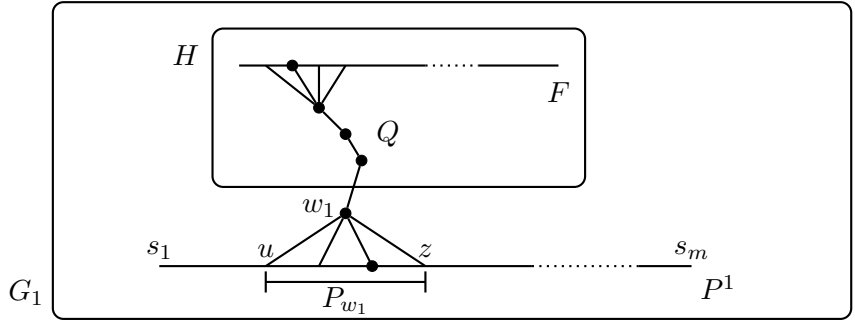
\begin{figure}
  \centering
  \begin{tikzpicture}[scale=0.7]
  \tikzstyle{w}=[circle,draw,fill=black,inner sep=0pt,minimum width=4pt]

\draw[rounded corners, thick] (0,1)--(0,0)--(15,0)--(15,6)--(0,6)--(0,1);
\draw[rounded corners, thick] (3,5)--(3,2.5)--(10,2.5)--(10,5.5)--(3,5.5)--(3,5);

\draw[rounded corners, thick] (2,1)--(9,1);
\draw[rounded corners, thick, dotted] (9,1)--(11,1);
\draw[rounded corners, thick] (11,1)--(12,1);

    \node at (12.5, 0.6) {$P^1$};
    \node at (2.5, 5) {$H$};
    \node at (9.5, 4.3) {$F$};
    \node at (6.3, 3.5) {$Q$};
     \node at (-0.5, 0.5) {$G_1$};
     \node at (5.5, 0.35) {$P_{w_1}$};
    \node at (2, 1.3) {$s_1$};
    \node at (4, 1.3) {$u$};
    \node at (7, 1.3) {$z$};
    \node at (5, 2.1) {$w_1$};
    \node at (12, 1.3) {$s_m$};

\draw[rounded corners, thick] (3.5,4.8)--(7,4.8);
\draw[rounded corners, thick, dotted] (7,4.8)--(8,4.8);
\draw[rounded corners, thick] (8,4.8)--(9.5,4.8);

  \draw (4.5, 4.8) node [w] (v1) {};
  \draw (5, 4) node [w] (v2) {};
  \draw (5.5, 3.5) node [w] (v3) {};
  \draw (5.8, 3) node [w] (v4) {};
  \draw (5.5, 2) node [w] (v5) {};
  \draw (6, 1) node [w] (v6) {};

  \draw[thick] (v2)--(v3)--(v4)--(v5);
  \draw[thick] (v2)--(4,4.8);
  \draw[thick] (v2)--(4.5,4.8);
  \draw[thick] (v2)--(5,4.8);
  \draw[thick] (v2)--(5.5,4.8);
   \draw[thick] (v5)--(4,1);
   \draw[thick] (v5)--(5,1);
   \draw[thick] (v5)--(6,1);
   \draw[thick] (v5)--(7,1);

  \draw[thick] (4, 0.9)--(4,0.5);
  \draw[thick] (7, 0.9)--(7,0.5);
  \draw[thick] (4, 0.7)--(7,0.7);

  \end{tikzpicture}     \caption{The paths $P^1$, $F$, and $Q$ in \cref{thm:excludelongclaw}. }\label{fig:theorem49}
\end{figure}
Since $F$ has at least $d(r+p-1)$ vertices, by \cref{lem:longsegment},
there is a segment of $F$ with respect to $w_{\ell-1}$ that has at least $r+p-2$ vertices that are not adjacent to $w_{\ell-1}$.
Thus, there exists an induced path $J$ on $r+p$ vertices in the graph $G_1[V(F)\cup V(Q)]-w_0$ that has $w_1$ as one endpoint and does not contain any other neighbor of~$P^1$. 
We distinguish two cases depending on neighbors of $w_1$ on $P^1$.

\paragraph{Case 1: $w_1$ has a neighbor in $s_{r+p+1}P^1s_{m-r-p}$.}

Since $P_{w_1}$ has at most $r$ vertices, $P^1-V(P_{w_1})$ has two components, each with at least $p$ vertices.

Let $L$ be the subpath of $s_1P^1u$ on $p+1$ vertices that contains $u$, and let $R$ be the subpath of  $zP^1s_m$ on $p+1$ vertices that contains $z$.
In the case $L$ and $R$ are not adjacent, or share an endpoint (which may happen if $|V(P_{w_1})| = 1$), the graph $G_1[V(L)\cup V(R)\cup V(J)]$ contains an induced $S_p$.
If they are adjacent, then there is exactly one edge between them and this edge joins their respective neighbors of $w_1$.
Hence, in this case $G_1[V(L)\cup V(R)\cup V(J)]$ contains an induced $T_p$.
In both cases, we obtain a contradiction.

\paragraph{Case 2: All neighbors of $w_1$ on $P^1$ belong either to $s_1P^1s_{r+p}$ or to $s_{m-r-p+1}P^1s_m$.}

Without loss of generality, we may assume that $w_1$ has a neighbor in $s_1P^1s_{r+p}$. 
Note that $2(r+p)+r-2=3r+2p-2<6dq$. 
Since $P_{w_1}$ has at most $r$ vertices and  $P^1$ has at least $6dq$ vertices, no vertex of $s_{m-r-p+1}P^1s_m$ is adjacent to $w_1$.

Note that the path $zP^1s_m$ contains exactly one neighbor of $w_1$ and has at least $m-r-p+1$ vertices.
Let $y$ be the endpoint of $J$ other than $w_1$.
Observe that $J$ has exactly one neighbor on $zP^1s_m$ and this vertex, namely $z$, is adjacent to exactly $w_1$ on $J$.
Hence, $yJw_1zP^1s_m$ is an induced path on at least $(m-r-p+1)+(r+p)>m$ vertices.
This is a contradiction to our assumption that $P^1$ is a longest induced path in $G_1$.

\medskip
Hence, our claim follows, that is, $H$ does not contain an induced path on $d(r+p-1)$ vertices.

\medskip
Now let $h \coloneqq 2dq$ and let $(B,\beta'')$ be the $h$-backbone structure of $P^1$ in $G_1$, where $B=b_1b_2\dots b_n$.
Since $h = 2dq\ge q-1$ and $G_1$ has no induced cycles of length at least $q$, \cref{lemma:backbone} implies that $(B,\beta'')$ is a path decomposition of $G_1[N_{G_1}[V(P^1)]]$ with independence number at most $(d-1)h=2(d-1)dq$.

Let $H$ be some component of $G_1-N_{G_1}[V(P^1)]$.
Since $h=2dq\ge 2(d-1)(q-2)+2q$, by \cref{cor:attachments} there exists a smallest $i_H\in[n]$ such that $N_{G_1}(V(H))\subseteq\beta''(b_{i_H})$.

It follows from the discussion above that $H$ excludes the path on $d(r+p-1)$ vertices as an induced subgraph.
Hence, by applying \cref{thm:excludepathoriginal} to each $H$, we may obtain a tree decomposition $(T_H,\beta_H)$ for $H$ with independence number at most $(d-1)d(r+p-3)$.
We combine these decompositions with $(B,\beta'')$ to form a tree decomposition $(T,\beta')$ of $G_1$ as follows.
For every $b\in V(B)$ set $\beta'(b)\coloneqq \beta''(b)$.
For every $H$ and $t\in V(T_H)$ set $\beta'(t)\coloneqq \beta_H(t)\cup\beta''(b_{i_H})$.
Then let $T$ be the tree obtained from the disjoint union of $B$ and all of the trees $T_H$ by joining, for every $H$, a single node of $T_H$ to the node $b_{i_H}$.
Observe that the resulting tuple $(T,\beta')$ is indeed a tree decomposition of $G_1$.
Moreover, for all $t\in V(T)$ we have 
\begin{align*}
    \alpha(G_1[\beta'(t)])& \leq (d-1)d(r+p-3)+2(d-1)dq \\
    &= (d-1)d(2(d-1)(q-2)+p-3+2q) \\
    &\le (d-1)d(2dq+p).
\end{align*}

Finally, to obtain a tree decomposition of $G$, we set $\beta(t)\coloneqq \beta'(t)\cup X_0$ for all $t\in V(T)$.
The resulting tree decomposition $(T,\beta)$ is now a tree decomposition of $G$ with independence number at most
\begin{align*}
 &\phantom{=~} (d-1)d(2dq+p)+(d-1)dp \\
 &= 2(d-1)d(dq+p) \\
 &\le 2(d-1)(p+1)\cdot d \cdot(2d^2+1) \\
 &\le 2(d-1)(p+1)\cdot 2(d-1)\cdot 5(d-1)^2=20(d-1)^4(p+1),
\end{align*}
as claimed.
\end{proof}

The general case, where $S$ and $T$ are not necessarily connected, follows from \cref{thm:excludelongclaw} via a straightforward induction.
More precisely, to prove \cref{thm:main} it suffices to observe the following.

\begin{corollary}\label{cor:main}
Let $d$, $k$, and $p$ be positive integers and let $G$ be a $\{K_{1,d},kS_p, kT_p\}$-free graph.
Then $\tin(G)< 6dk(p+1)+20d^4(p+1)$.
\end{corollary}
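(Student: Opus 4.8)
The plan is to prove \cref{cor:main} by induction on $k$, using \cref{thm:excludelongclaw} as the base case and \cref{lem:an-easy-lemma} as the inductive engine. Here $kS_p$ denotes the disjoint union of $k$ copies of $S_p$, and similarly for $kT_p$; note that for $k=1$ the statement is exactly \cref{thm:excludelongclaw} (with a slightly looser bound, since $20(d-1)^4(p+1)<20d^4(p+1)<6d(p+1)+20d^4(p+1)$), which settles the base.

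For the inductive step, suppose $k\ge 2$ and that the bound holds for $k-1$, and let $G$ be a $\{K_{1,d},kS_p,kT_p\}$-free graph. The key dichotomy is whether $G$ itself contains an induced copy of $S_p$ or of $T_p$. If $G$ is $\{K_{1,d},S_p,T_p\}$-free, then \cref{thm:excludelongclaw} already gives $\tin(G)\le 20(d-1)^4(p+1)$, which is below the target. Otherwise, fix one induced copy $R$ of $S_p$ or $T_p$ in $G$ and set $S\coloneqq N[V(R)]$, the closed neighborhood of its vertex set. The crucial observation is that $G-S$ contains no induced copy of $S_p$ or $T_p$ that is vertex-disjoint from $R$ with no edges to $R$ in $G$, so $G-S$ is $\{K_{1,d},(k-1)S_p,(k-1)T_p\}$-free; indeed, any induced $S_p$ or $T_p$ in $G-S$ is non-adjacent to $R$ (all neighbors of $R$ lie in $S$, which was removed), so together with the $k-1$ further disjoint copies guaranteed by the absence in $G-S$ we would otherwise reconstruct the forbidden $kS_p$ or $kT_p$ in $G$. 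By induction $\tin(G-S)<6d(k-1)(p+1)+20d^4(p+1)$. Then \cref{lem:an-easy-lemma} yields
\[
\tin(G)\le \tin(G-S)+\alpha(G[S]),
\]
and it remains to bound $\alpha(G[S])$.

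To bound $\alpha(G[S])=\alpha(G[N[V(R)]])$, I would observe that $R$, being $S_p$ or $T_p$, has bounded vertex count, and a short induced path threading $R$ lets us invoke \cref{lemma:boundedalphapath}: since $R$ has at most roughly $3p$ vertices and $R$ is connected, $N[V(R)]$ has independence number at most $(d-1)\cdot|V(R)|$, giving $\alpha(G[S])\le (d-1)\cdot 3p$ up to the exact vertex count of $S_p$/$T_p$. The target increment between consecutive values of $k$ is $6d(p+1)$, so the bookkeeping reduces to checking $(d-1)|V(R)|\le 6d(p+1)$, which holds comfortably since $|V(R)|\le 3p+1$ for $S_p$ and $|V(R)|\le 3p$ for $T_p$. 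Combining, $\tin(G)<6d(k-1)(p+1)+20d^4(p+1)+6d(p+1)=6dk(p+1)+20d^4(p+1)$, completing the induction.

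The main obstacle I anticipate is the disjointness argument in the inductive step: one must verify carefully that removing $N[V(R)]$ genuinely strips away all edges between $R$ and the rest of $G$, so that an induced $(k-1)S_p$ (or $(k-1)T_p$) in $G-S$ combines with $R$ to form an induced $kS_p$ (or $kT_p$) \emph{with no connecting edges}; this is exactly why $S$ is taken to be the closed neighborhood rather than just $V(R)$. The other delicate point is the uniform numerical bound on $\alpha(G[N[V(R)]])$, but since \cref{lemma:boundedalphapath} directly applies once we fix an induced spanning path of the connected graph $R$ (each of $S_p$ and $T_p$ has a Hamiltonian-path-like induced path of bounded length, or we simply bound $\alpha$ by $(d-1)|V(R)|$ vertex-by-vertex as in that lemma's proof), this is routine.
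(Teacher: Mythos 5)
Your overall strategy (induction on $k$, base case \cref{thm:excludelongclaw}, inductive step via \cref{lem:an-easy-lemma} after deleting the closed neighborhood of a forbidden pattern) is the paper's strategy, and your bookkeeping for $\alpha(G[N[V(R)]])\le (d-1)|V(R)|$ is fine. But there is a genuine gap in the inductive step: you delete the closed neighborhood $S=N[V(R)]$ of a \emph{single} copy $R$ of $S_p$ \emph{or} $T_p$, and then claim that $G-S$ is $\{(k-1)S_p,(k-1)T_p\}$-free. This only follows for the pattern that $R$ actually realizes. Say $R$ is a copy of $S_p$: if $G-S$ contained an induced $(k-1)S_p$, it would indeed combine with $R$ (which is anticomplete to $G-S$) into an induced $kS_p$, a contradiction. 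But if $G-S$ contains an induced $(k-1)T_p$, you cannot combine it with $R$ to produce $kT_p$, because $R$ is an $S_p$; the copy of $T_p$ that $G$ may contain elsewhere could intersect or attach to that $(k-1)T_p$, so no contradiction arises. Hence $G-S$ need not satisfy the hypothesis of your induction, and the step breaks.

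The fix is what the paper does: in each inductive step remove the closed neighborhoods of \emph{both} patterns simultaneously --- a set $X=N[X_0]$ for some induced $S_p$ with vertex set $X_0$ (taking $X=\emptyset$ if $G$ is $S_p$-free) and a set $Y$ defined analogously for $T_p$ --- and apply \cref{lem:an-easy-lemma} to $G-(X\cup Y)$. Then an induced $(k-1)S_p$ in $G-(X\cup Y)$ forces $X\neq\emptyset$ (as $k-1\ge 1$) and combines with $X_0$ into $kS_p$, and symmetrically for $T_p$, so $G-(X\cup Y)$ really is $\{K_{1,d},(k-1)S_p,(k-1)T_p\}$-free. The price is $\alpha(G[X\cup Y])\le 6(d-1)(p+1)$ per step rather than $3(d-1)(p+1)$, which the increment $6dk(p+1)$ in the stated bound is designed to absorb. (Alternatively, your single-removal step could be salvaged by a two-parameter induction on $\{K_{1,d},k_1S_p,k_2T_p\}$-free graphs, decreasing $k_1$ or $k_2$ by one at a time; this yields the same bound.) A minor additional point: \cref{thm:excludelongclaw} assumes $d\ge 2$, so the degenerate case $d=1$ (where $G$ is edgeless and $\tin(G)\le 1$) should be disposed of separately before starting the induction.
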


\begin{proof}
If $d = 1$, then $G$ is edgeless and $\tin(G)\le 1$, hence, the inequality holds.
So we may assume that $d\ge 2$.

We proceed by proving the following inequality by induction on $k$:
\begin{align*}
    \tin(G) \leq 6(d-1)(k-1)(p+1)+20(d-1)^4(p+1)
\end{align*}
Notice that, since $6(d-1)(k-1)(p+1)+20(d-1)^4(p+1)< 6dk(p+1)+20d^4(p+1)$, this will imply the assertion.
The case $k=1$ is handled by \cref{thm:excludelongclaw}, so we may immediately proceed with the inductive step for $k\geq 2$.

If $G$ contains an induced $S_p$, then let $X_0$ be the vertex set of an arbitrary induced $S_p$ in $G$ and $X$ be the closed neighborhood of $X_0$ in $G$.
If no induced $S_p$ exists in $G$, then let $X\coloneqq \emptyset$.
Notice that $\alpha(G[X])\leq |V(S_p)|(d-1)\leq 3(d-1)(p+1)$.

Similarly, if $G$ contains an induced $T_p$, we select a set $Y\subseteq V(G)$ such that $Y$ is the closed neighborhood of some induced $T_p$ in $G$.
Otherwise we set $Y\coloneqq \emptyset$.
As before we have $\alpha(G[Y])\leq |V(T_p)|(d-1)\leq 3(d-1)(p+1)$.

Now we may observe two things.
First, $\alpha(G[X\cup Y])\leq 6(d-1)(p+1)$.
Second, if the graph $G-(X\cup Y)$ contains an induced $(k-1) S_p$ or an induced $(k-1) T_p$, then $G$ contains an induced $k S_p$ or an induced $k T_p$, respectively, a contradiction.
Thus, $G-(X\cup Y)$ is $\{K_{1,d}, (k-1) S_p, (k-1) T_p\}$-free.
By our induction hypothesis, it follows that $\tin(G-(X\cup Y))\leq 6(d-1)(k-2)(p+1)+20(d-1)^4(p+1)$.
With the bound the independence number of $G[X\cup Y]$ and \cref{lem:an-easy-lemma} we obtain
\begin{align*}
    \tin(G) & \leq \alpha(G[X\cup Y]) + \tin(G-(X\cup Y))\\
    & \leq 6(d-1)(p+1) + 6(d-1)(k-2)(p+1)+20(d-1)^4(p+1)\\
    & \leq 6(d-1)(k-1)(p+1)+20(d-1)^4(p+1)\,,
\end{align*}
as desired.
\end{proof}

\section{Tree-independence number of line graphs\label{sec:line}}

In this section, we show that \cref{conj:main} holds for subclasses of the class of line graphs and determine the exact values of tree-independence number of line graphs of complete graphs and complete bipartite graphs.

\subsection{\texorpdfstring{\cref{conj:main}}{Conjecture 1.1} for subclasses of the class of line graphs}

We start by recalling a result of Bodlaender, Gustedt, and Telle~\cite{DBLP:conf/soda/BodlaenderGT98}.
The \emph{clique cover number} of a graph $G$ is the minimum number of cliques with union $V(G)$.
The proof of~\cite[Lemma 2.4]{DBLP:conf/soda/BodlaenderGT98} shows the following.

\begin{theorem}\label{thm:intersection-representation}
Let $H$ be a graph, let $\{H_j\}_{j\in J}$ be a family of connected subgraphs of $H$, and let $G$ be the graph with vertex set $J$ in which two distinct vertices $i$ and $j$ are adjacent if and only if $H_i$ and $H_j$ have a vertex in common.
Then $G$ has a tree decomposition $\mathcal T = (T,\beta)$ such that for each $t\in V(T)$, the induced subgraph $G[\beta(t)]$ has clique cover number at most $\tw(H)+1$.
\end{theorem}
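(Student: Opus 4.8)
The plan is to take a tree decomposition $\mathcal{T}_H = (T, \gamma)$ of $H$ of width $\tw(H)$ and convert it into a tree decomposition of the intersection graph $G$ using the \emph{same} tree $T$. The natural guess for the bags is: for each node $t \in V(T)$, let $\beta(t)$ consist of all $j \in J$ such that the connected subgraph $H_j$ uses at least one vertex of the bag $\gamma(t)$, i.e.
\[
\beta(t) \coloneqq \{\, j \in J \mid V(H_j) \cap \gamma(t) \neq \emptyset \,\}\,.
\]
The intuition is that each vertex $u \in V(H)$ contributes a clique to $G$ — namely the set $C_u = \{ j \in J \mid u \in V(H_j)\}$ of all subgraphs passing through $u$ — and the bag $\beta(t)$ is exactly $\bigcup_{u \in \gamma(t)} C_u$, a union of $|\gamma(t)| \le \tw(H)+1$ cliques. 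This immediately gives the desired clique cover bound, so that part is essentially free once the construction is set up.

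The main work is verifying the three tree-decomposition axioms for $(T,\beta)$. \textbf{Vertex coverage} is immediate: each $H_j$ is nonempty, so it meets some $\gamma(t)$, placing $j \in \beta(t)$. \textbf{Edge coverage} is the crucial step and is where I expect the real obstacle to lie. If $ij \in E(G)$, then $H_i$ and $H_j$ share a common vertex $u$; then $u$ lies in some $\gamma(t)$, and that single node $t$ has both $i,j \in \beta(t)$ — so edge coverage actually follows from vertex coverage of $H$ applied to the shared vertex. \textbf{Connectivity} (that $\{t : j \in \beta(t)\}$ induces a subtree of $T$) is the step I would scrutinize most carefully: I would argue that this set equals $\bigcup_{u \in V(H_j)} \{t : u \in \gamma(t)\}$, a union of subtrees of $T$ (by the tree-decomposition property of $\mathcal{T}_H$). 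The key lemma is that this union is itself \emph{connected} in $T$, which is exactly the point where connectedness of the subgraph $H_j$ is needed: because $H_j$ is connected, for any edge $uu' \in E(H_j)$ there is a node of $T$ whose bag contains both $u$ and $u'$, so the corresponding subtrees of $u$ and $u'$ overlap; chaining this along a spanning tree of $H_j$ shows the union of subtrees is connected, hence a subtree.

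\textbf{The hard part} is thus the connectivity axiom, and specifically the claim that a union of subtrees of $T$ indexed by the vertices of a \emph{connected} graph $H_j$ — where adjacent vertices have overlapping subtrees — is connected. I would isolate this as a small combinatorial sublemma: a union of subtrees $\{T_u\}_{u \in V(H_j)}$ of a tree $T$ is connected whenever the \emph{intersection graph} of the family $\{T_u\}$ contains a connected spanning subgraph isomorphic to (a spanning connected subgraph of) $H_j$; then connectedness of $H_j$ together with the edge-overlap property supplies such a subgraph. This is a standard fact about subtrees of a tree (the Helly property / the fact that subtrees sharing pairwise-chained overlaps union to a subtree), so I would either cite it or give the one-line induction on $|V(H_j)|$. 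Everything else — vertex and edge coverage, and the clique-cover count — is routine bookkeeping that I would not belabor.
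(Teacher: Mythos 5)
Your proof is correct and is essentially the standard argument: the paper itself does not reprove this statement but cites it to the proof of Lemma~2.4 of Bodlaender, Gustedt, and Telle, which uses exactly your construction (bags $\beta(t)=\{j : V(H_j)\cap\gamma(t)\neq\emptyset\}$ over a width-$\tw(H)$ decomposition of $H$, with each bag covered by the cliques $C_u$ for $u\in\gamma(t)$). Your identification of the connectivity axiom as the one place where connectedness of the $H_j$ is used, via the subtrees-of-a-tree overlap argument, is the right and complete justification.
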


Since the independence number of any graph $G$ is a lower bound on its clique cover number, \cref{thm:intersection-representation} implies the following.

\begin{corollary}\label{cor:intersection-representation}
Let $H$ be a graph, let $\{H_j\}_{j\in J}$ be a family of connected subgraphs of $H$, and let $G$ be the graph with vertex set $J$ in which two distinct vertices $i$ and $j$ are adjacent if and only if $H_i$ and $H_j$ have a vertex in common.
Then $\tin(G)\le \tw(H)+1$.
\end{corollary}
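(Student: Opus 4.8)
The plan is to derive Corollary~\ref{cor:intersection-representation} as an essentially immediate consequence of Theorem~\ref{thm:intersection-representation}, using the inequality $\alpha(G') \le \textnormal{(clique cover number of }G'\textnormal{)}$ that holds for every graph $G'$. First I would invoke Theorem~\ref{thm:intersection-representation} to obtain a tree decomposition $\mathcal{T} = (T,\beta)$ of the intersection graph $G$ with the property that for every node $t \in V(T)$, the clique cover number of $G[\beta(t)]$ is at most $\tw(H)+1$. This is the entire structural input; the remaining work is to convert the clique-cover bound on each bag into a bound on the \emph{independence number} of that same bag.

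The key observation is that in any graph $G'$, the independence number $\alpha(G')$ is at most the clique cover number: given a partition (or cover) of $V(G')$ into $c$ cliques, any independent set can contain at most one vertex from each clique, so it has at most $c$ vertices. Applying this with $G' = G[\beta(t)]$ gives $\alpha(G[\beta(t)]) \le \tw(H)+1$ for every $t \in V(T)$. Taking the maximum over all nodes $t$, this yields $\alpha(\mathcal{T}) = \max_{t \in V(T)} \alpha(G[\beta(t)]) \le \tw(H)+1$.

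Finally, since $\tin(G)$ is by definition the minimum independence number over all tree decompositions of $G$, and $\mathcal{T}$ is one particular tree decomposition witnessing independence number at most $\tw(H)+1$, I would conclude $\tin(G) \le \alpha(\mathcal{T}) \le \tw(H)+1$, which is exactly the claim.

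There is no real obstacle here: the corollary is a clean distillation of Theorem~\ref{thm:intersection-representation} once one notes the elementary inequality between independence number and clique cover number. The only point requiring a sentence of care is stating that inequality correctly and observing that it applies bag-by-bag to the \emph{same} decomposition produced by the theorem, so that no re-engineering of the tree decomposition is needed. The substantive content was already carried by Theorem~\ref{thm:intersection-representation} (itself extracted from~\cite{DBLP:conf/soda/BodlaenderGT98}); this corollary merely records the tree-independence consequence.
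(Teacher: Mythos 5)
Your proposal is correct and matches the paper's argument exactly: the paper derives the corollary from \cref{thm:intersection-representation} in one line, by noting that the independence number of any graph is at most its clique cover number and applying this bag-by-bag to the decomposition produced by the theorem. No further comment is needed.
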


\cref{cor:intersection-representation} implies the following inequality relating the treewidth of a graph and the tree-independence number of its line graph. 

\begin{theorem}\label{thm:line}
For every graph $G$, it holds that $\tin(L(G))\le \tw(G)+1$.
Moreover, the bound is sharp: for every integer $n\ge 3$, there exists a graph $G$ such that $\tw(G) = n$ and $\tin(L(G))= n+1$.
\end{theorem}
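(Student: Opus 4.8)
The inequality $\tin(L(G))\le \tw(G)+1$ follows immediately from \cref{cor:intersection-representation} once we exhibit the right intersection representation. The plan is to observe that the line graph $L(G)$ is precisely the intersection graph of a family of connected subgraphs of $G$: for each edge $e=uv\in E(G)$, let $H_e$ be the subgraph of $G$ consisting of the single edge $e$ together with its two endpoints $u$ and $v$. Each $H_e$ is connected, and two distinct edges $e,f$ of $G$ share a vertex if and only if $H_e$ and $H_f$ have a vertex in common, which is exactly the adjacency relation in $L(G)$. Applying \cref{cor:intersection-representation} with $H=G$ and the family $\{H_e\}_{e\in E(G)}$ yields $\tin(L(G))\le \tw(G)+1$ directly, with no further work.

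\textbf{Sharpness.} For the second part I would produce, for each $n\ge 3$, a concrete graph $G$ with $\tw(G)=n$ and $\tin(L(G))=n+1$. Since the upper bound already gives $\tin(L(G))\le \tw(G)+1=n+1$, it suffices to choose $G$ so that $\tw(G)=n$ and to prove the matching \emph{lower bound} $\tin(L(G))\ge n+1$. A natural candidate is the complete graph $K_{n+1}$, which has treewidth exactly $n$; its line graph is the triangular graph $L(K_{n+1})$. The lower bound is the substantive part: I need to show that every tree decomposition of $L(K_{n+1})$ has a bag containing an independent set of size at least $n+1$. In $L(K_{n+1})$, an independent set corresponds to a matching in $K_{n+1}$, so I want a bag that contains (the edges of) a matching of size $n+1$ in $K_{n+1}$, or more generally an independent set of size $n+1$ in the line graph.

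\textbf{Executing the lower bound.} To force a large independent set into a single bag, I would apply \cref{lem:closednbr}: in any tree decomposition of $L(K_{n+1})$ there is a vertex $x$ of $L(K_{n+1})$ and a node $t$ with $N[x]\subseteq\beta(t)$. A vertex $x$ of $L(K_{n+1})$ is an edge $uv$ of $K_{n+1}$, and its closed neighborhood consists of all edges of $K_{n+1}$ meeting $\{u,v\}$. The plan is then to show that this closed neighborhood $N[x]$ itself contains an independent set of size $n+1$ in the line graph, i.e.\ that the edges of $K_{n+1}$ incident to $\{u,v\}$ contain a matching of size $n+1$. The edges incident to $u$ or $v$ form a ``double star'': one edge $uv$, together with the $n-1$ edges from $u$ and the $n-1$ edges from $v$ to the remaining $n-1$ vertices. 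A matching among these edges can use $uv$ (size $1$) or can pair up $u$- and $v$-edges: for each of the $n-1$ other vertices $w$, pick either $uw$ or $vw$, and these are pairwise vertex-disjoint only if the chosen endpoints in $\{u,v\}$ and in the $w$'s are all distinct. A cleaner route is to verify directly that the closed neighborhood of an edge in $K_{n+1}$ has independence number exactly $n+1$ in the line graph; since a maximum matching of $K_{n+1}$ (on $n+1$ vertices) has size $\lfloor (n+1)/2\rfloor$, I may instead need a different vertex $G$ or a more careful counting, so I should double-check whether $K_{n+1}$ actually achieves equality or whether a tailored gadget is required.

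\textbf{Anticipated obstacle.} The main difficulty lies entirely in the sharpness claim: confirming that some explicit $G$ with $\tw(G)=n$ forces $\tin(L(G))\ge n+1$. The closed-neighborhood argument via \cref{lem:closednbr} is the natural tool, but I expect the subtlety to be the exact independence-number computation inside that bag. If $K_{n+1}$ does not give equality, the fallback is to take $G$ to be a graph whose treewidth is $n$ and which contains, around some edge, a configuration of $n+1$ pairwise vertex-disjoint edges in its closed neighborhood — for instance by attaching enough pendant or spread-out structure so that some edge's closed neighborhood houses a matching of size $n+1$, while keeping $\tw(G)=n$. Verifying $\tw(G)=n$ for the chosen $G$ (using \cref{thm:tw-via-chordal-graphs} or standard facts about complete graphs) and simultaneously pinning down the line-graph independence number in the relevant bag is the crux of the argument.
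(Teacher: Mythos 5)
Your upper bound is exactly the paper's argument: apply \cref{cor:intersection-representation} to the family $\{H_e\}_{e\in E(G)}$ of single-edge subgraphs of $G$. That part is correct and complete.

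The sharpness part has a genuine gap, and the obstacle you flag at the end is fatal to your whole plan, not just to the specific example. First, $K_{n+1}$ cannot work: an independent set in $L(K_{n+1})$ is a matching of $K_{n+1}$, so $\alpha(L(K_{n+1}))=\lfloor (n+1)/2\rfloor$, and since $\tin\le\alpha$ this gives $\tin(L(K_{n+1}))\le\lfloor (n+1)/2\rfloor<n+1$ for all $n\ge 3$ (indeed the paper shows $\tin(L(K_m))=\lfloor m/2\rfloor$ in \cref{prop:LKn}). Second, and more importantly, the method itself cannot be repaired by a better gadget: if $x=uv$ is a vertex of any line graph $L(G)$, then every vertex of $N[x]$ is an edge of $G$ meeting $\{u,v\}$, and a set of pairwise disjoint such edges has size at most $2$. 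So a single application of \cref{lem:closednbr} can never certify $\tin(L(G))\ge 3$, let alone $n+1$; no amount of pendant structure around an edge creates $n+1$ disjoint edges all touching its two endpoints.

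The paper's route is different in kind: it builds $G_n$ from $K_n$ by replacing each edge with two internally disjoint paths of length two, shows $\tw(G_n)\le n-1$ by completing the branch vertices into a clique (\cref{thm:tw-via-chordal-graphs}), and obtains the lower bound globally rather than from one bag: a red--blue edge-colouring of $G_n$ makes $L(G_n)$ contain $K_{n,n}$ as an induced minor, whence $\tin(L(G_n))\ge\tin(K_{n,n})=n$ by \cref{lem:tree-independence number induced-minor} and \cref{tin-of-Knn}. (The hard work of forcing a large independent set into a bag is hidden in the proof that $\tin(K_{n,n})=n$, which does use \cref{lem:closednbr}, but inductively and combined with a connectivity argument on the decomposition tree, not in one shot.) To fix your proof you would need either this construction or some other lower-bound mechanism that is not a single closed-neighbourhood bag.
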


\begin{proof}
For any graph $G$, applying \cref{cor:intersection-representation} to the case when $J = E(G)$ and for each edge $e\in E(G)$, the graph $H_e$ is the subgraph of $G$ induced by the endpoints of $e$, implies that $\tin(L(G))\le \tw(G)+1$.

We show that the bound is sharp with the following construction.
For an integer $n\ge 3$, let $G_n$ be the graph obtained from a complete graph of order $n$ by replacing each of its edges with two paths of length two joining the endpoints of the edge.
Thus, $G_n$ has $n+2{n\choose 2}$ vertices.
See \cref{fig:L(G_3)} for an illustration of $G_3$ and the remaining steps of this proof.
Let us denote by $V_n$ the set of $n$ vertices of the initial complete graph.
We fix an edge-coloring for $G_n$ with the colors red and blue such that for any two distinct vertices $u,v\in V_n$ the $4$-cycle composed by the two paths of length $2$ between $u$ and $v$ is properly edge-colored.
Note that every vertex of $V_n$ is incident with precisely $n-1$ red edges and $n-1$ blue edges.

Now let $H_n$ be the line graph of $G_n$.
For each vertex $v_i\in V_n$, the set of edges incident with $v_i$ correspond to a clique $C_i$ in $H_n$ with cardinality $2(n-1)$. 
Because of the above red-blue edge colorings, each of these cliques $C_i$ is partitioned into a ``red'' clique $R_i$ and a ``blue'' clique $B_i$, each with cardinality $n-1$.

\begin{figure}[ht]
    \centering
    \begin{tikzpicture}[scale=1]
        
        \pgfdeclarelayer{background}
		\pgfdeclarelayer{foreground}
			
		\pgfsetlayers{background,main,foreground}
			
			\begin{pgfonlayer}{main}
			
                \node (C) [v:ghost] {};
                \node (L) [v:ghost,position=180:50mm from C] {};
                \node (R) [v:ghost,position=0:50mm from C] {};

                \node (Llabel) [v:ghost,position=270:19mm from L] {(a)};
                \node (Clabel) [v:ghost,position=270:19mm from C] {(b)};
                \node (Rlabel) [v:ghost,position=270:19mm from R] {(c)};

                \node (X) [v:ghost,position=0:0mm from L] {

                    \begin{tikzpicture}[scale=1]
        
                        \pgfdeclarelayer{background}
		                  \pgfdeclarelayer{foreground}
			
		                  \pgfsetlayers{background,main,foreground}
			
			            \begin{pgfonlayer}{main}
			
                            \node (C) [v:ghost] {};

                            \node (u_1) [v:main,position=90:9mm from C] {};
                            \node (u_2) [v:main,position=210:9mm from C] {};
                            \node (u_3) [v:main,position=330:9mm from C] {};

                            \node (x_1) [v:main,position=150:12mm from C] {};
                            \node (x_2) [v:main,position=150:6mm from C] {};

                            \node (y_1) [v:main,position=270:12mm from C] {};
                            \node (y_2) [v:main,position=270:6mm from C] {};

                            \node (z_1) [v:main,position=30:12mm from C] {};
                            \node (z_2) [v:main,position=30:6mm from C] {};
   
                        \end{pgfonlayer}
			
			            \begin{pgfonlayer}{background}

                            \draw [e:main,line width=1.5pt,color=BostonUniversityRed] (u_1) to (x_1);
                            \draw [e:main,line width=1.5pt,color=CornflowerBlue] (u_1) to (x_2);
                            \draw [e:main,line width=1.5pt,color=CornflowerBlue] (u_1) to (z_1);
                            \draw [e:main,line width=1.5pt,color=BostonUniversityRed] (u_1) to (z_2);

                            \draw [e:main,line width=1.5pt,color=CornflowerBlue] (u_2) to (x_1);
                            \draw [e:main,line width=1.5pt,color=BostonUniversityRed] (u_2) to (x_2);
                            \draw [e:main,line width=1.5pt,color=BostonUniversityRed] (u_2) to (y_1);
                            \draw [e:main,line width=1.5pt,color=CornflowerBlue] (u_2) to (y_2);

                            \draw [e:main,line width=1.5pt,color=CornflowerBlue] (u_3) to (y_1);
                            \draw [e:main,line width=1.5pt,color=BostonUniversityRed] (u_3) to (y_2);
                            \draw [e:main,line width=1.5pt,color=BostonUniversityRed] (u_3) to (z_1);
                            \draw [e:main,line width=1.5pt,color=CornflowerBlue] (u_3) to (z_2);
   
			            \end{pgfonlayer}
   
			            \begin{pgfonlayer}{foreground}

			            \end{pgfonlayer}
   
                    \end{tikzpicture}};

                    \node (Y) [v:ghost,position=0:0mm from C] {

                    \begin{tikzpicture}[scale=1]
        
                        \pgfdeclarelayer{background}
		                  \pgfdeclarelayer{foreground}
			
		                  \pgfsetlayers{background,main,foreground}
			
			            \begin{pgfonlayer}{main}
			
                            \node (C) [v:ghost] {};

                            \node (A) [v:ghost,position=90:11mm from C] {};
                            \node (BB) [v:ghost,position=210:11mm from C] {};
                            \node (CC) [v:ghost,position=330:11mm from C] {};
                            \node (B) [v:ghost,position=180:2mm from BB] {};
                            \node (C) [v:ghost,position=0:2mm from CC] {};

                            \node (a_2) [v:main,position=180:2.5mm from A,color=CornflowerBlue] {};
                            \node (a_1) [v:main,position=180:6mm from a_2,color=BostonUniversityRed] {};
                            \node (a_3) [v:main,position=0:2.5mm from A,color=BostonUniversityRed] {};
                            \node (a_4) [v:main,position=0:6mm from a_3,color=CornflowerBlue] {};

                            \node (b_2) [v:main,position=135:2.5mm from B,color=BostonUniversityRed] {};
                            \node (b_1) [v:main,position=135:5mm from b_2,color=CornflowerBlue] {};
                            \node (b_3) [v:main,position=315:2.5mm from B,color=CornflowerBlue] {};
                            \node (b_4) [v:main,position=315:5mm from b_3,color=BostonUniversityRed] {};

                            \node (c_2) [v:main,position=45:2.5mm from C,color=CornflowerBlue] {};
                            \node (c_1) [v:main,position=45:5mm from c_2,color=BostonUniversityRed] {};
                            \node (c_3) [v:main,position=225:2.5mm from C,color=BostonUniversityRed] {};
                            \node (c_4) [v:main,position=225:5mm from c_3,color=CornflowerBlue] {};
   
                        \end{pgfonlayer}
			
			            \begin{pgfonlayer}{background}

                        \node (AAA) [v:ghost,position=90:1.2mm from A] {};
                        \node (BBB) [v:ghost,position=210:1.2mm from B] {};
                        \node (CCC) [v:ghost,position=330:1.2mm from C] {};

                        \node [ellipse, draw=black,minimum width=25mm,minimum height=9mm,opacity=1,rotate=0,position=0:0mm from AAA] {};

                        \node [ellipse, draw=black,minimum width=25mm,minimum height=9mm,opacity=1,rotate=135,position=0:0mm from BBB] {};

                        \node [ellipse, draw=black,minimum width=25mm,minimum height=9mm,opacity=1,rotate=225,position=0:0mm from CCC] {};

                        \draw [e:main] (a_1) to (b_1);
                        \draw [e:main] (a_2) to (b_2);

                        \draw [e:main] (b_3) to (c_3);
                        \draw [e:main] (b_4) to (c_4);

                        \draw [e:main] (c_1) to (a_4);
                        \draw [e:main] (c_2) to (a_3);

                        \draw [e:main,line width=1.5pt,bend left=55,color=ChromeYellow,densely dashed] (a_1) to (a_3);
                        \draw [e:main,line width=1.5pt,bend left=55,color=ChromeYellow,densely dashed] (a_2) to (a_4);

                        \draw [e:main,line width=1.5pt,bend right=55,color=ChromeYellow,densely dashed] (b_1) to (b_3);
                        \draw [e:main,line width=1.5pt,bend right=55,color=ChromeYellow,densely dashed] (b_2) to (b_4);

                        \draw [e:main,line width=1.5pt,bend left=55,color=ChromeYellow,densely dashed] (c_1) to (c_3);
                        \draw [e:main,line width=1.5pt,bend left=55,color=ChromeYellow,densely dashed] (c_2) to (c_4);
   
			            \end{pgfonlayer}
   
			            \begin{pgfonlayer}{foreground}

			            \end{pgfonlayer}
   
                    \end{tikzpicture}};

                    \node (Z) [v:ghost,position=0:0mm from R] {

                    \begin{tikzpicture}[scale=1]
        
                        \pgfdeclarelayer{background}
		                  \pgfdeclarelayer{foreground}
			
		                  \pgfsetlayers{background,main,foreground}
			
			            \begin{pgfonlayer}{main}
			
                            \node (Center) [v:ghost] {};

                            \node (A) [v:ghost,position=90:11mm from Center] {};
                            \node (BB) [v:ghost,position=210:11mm from Center] {};
                            \node (CC) [v:ghost,position=330:11mm from Center] {};
                            \node (B) [v:ghost,position=180:2mm from BB] {};
                            \node (C) [v:ghost,position=0:2mm from CC] {};

                            \node (a_1) [v:main,color=BostonUniversityRed,position=180:4mm from A] {};
                            \node (a_2) [v:main,color=CornflowerBlue,position=0:4mm from A] {};

                            \node (b_1) [v:main,color=CornflowerBlue,position=135:3.5mm from B] {};
                            \node (b_2) [v:main,color=BostonUniversityRed,position=315:3.5mm from B] {};

                            \node (c_1) [v:main,color=BostonUniversityRed,position=45:3.5mm from C] {};
                            \node (c_2) [v:main,color=CornflowerBlue,position=225:3.5mm from C] {};
   
                        \end{pgfonlayer}
			
			            \begin{pgfonlayer}{background}

                            \draw [e:main] (a_1) to (a_2);
                            \draw [e:main] (a_1) to (b_1);
                            \draw [e:main] (b_1) to (b_2);
                            \draw [e:main] (b_2) to (c_2);
                            \draw [e:main] (c_2) to (c_1);
                            \draw [e:main] (c_1) to (a_2);

                            \draw [e:main] (a_1) to (c_2);
                            \draw [e:main] (b_2) to (a_2);
                            \draw [e:main] (c_1) to (b_1);
   
			            \end{pgfonlayer}
   
			            \begin{pgfonlayer}{foreground}

			            \end{pgfonlayer}
   
                    \end{tikzpicture}};
   
            \end{pgfonlayer}
			
			\begin{pgfonlayer}{background}
   
			\end{pgfonlayer}
   
			\begin{pgfonlayer}{foreground}

			\end{pgfonlayer}
   
    \end{tikzpicture}
    \caption{An illustration for the proof of \cref{thm:line}: (a) shows the graph $G_3$ together with the edge coloring, (b) is the line graph $H_3$ of $G_3$ with the vertex coloring induced by the edge coloring of $G_3$, and (c) shows the resulting $K_{3,3}$ after contracting the dashed edges in (b).}
    \label{fig:L(G_3)}
\end{figure}
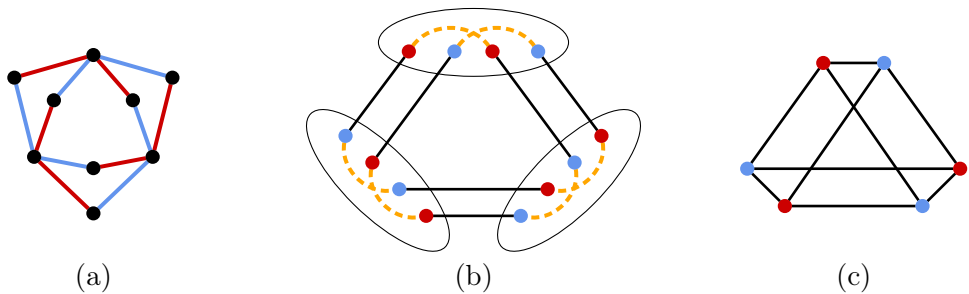

Note that if $i\neq j$, then there is no edge in $H_n$ between $R_i$ and $R_j$ and also no edge between $B_i$ and $B_j$.
Furthermore, for any $i,j$, there is an edge between $B_i$ and $R_j$.
Thus, contracting the edges within each of the monochromatic cliques $R_i$ and $B_i$ yields a complete bipartite graph $K_{n,n}$. 
Hence, $H_n$ contains $K_{n,n}$ as an induced minor.
By \cref{lem:tree-independence number induced-minor,tin-of-Knn}, we infer that $\tin(H_n) \ge n$.

For $n\ge 3$, turning $V_n$ into a clique transforms $G_n$ into a chordal graph with clique number $n$. 
Hence, $\tw(G_n)\le n-1$ by \cref{thm:tw-via-chordal-graphs}.

We conclude that $n\le \tin(L(G_n))\le \tw(G_n)+1\le n$ and hence, equalities hold.
\end{proof}

We remark that the statement of \cref{thm:line} is similar to the following.

\begin{theorem}[Dallard et al., Theorem 3.8 in~\cite{dallard2022firstpaper}]\label{thm:tin-tw} For every graph $G$, it holds that $\tin(G)\le\tw(G)+1$, and this bound is sharp: for every integer $n\ge 2$, there exists a graph $G$ such that $\tw(G) = n$ and $\tin(G) = n+1$.
\end{theorem}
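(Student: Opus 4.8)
The plan is to prove the two halves separately: the inequality is routine, and the sharpness is the substantive part. For the inequality $\tin(G)\le\tw(G)+1$, I would take a tree decomposition $\mathcal{T}=(T,\beta)$ of $G$ of minimum width. For every node $t$ we have $\alpha(G[\beta(t)])\le|\beta(t)|\le\tw(G)+1$, since the independence number of any graph is bounded by its number of vertices; taking the maximum over $t$ gives $\alpha(\mathcal{T})\le\tw(G)+1$, hence $\tin(G)\le\tw(G)+1$. (This is precisely the ``follows immediately from the definitions'' remark made in the text for the inequality $\tin(G)\le\tw(G)+1$.)

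For sharpness I would, for each $n\ge 2$, construct a graph $G_n$ and verify $\tw(G_n)=n$ and $\tin(G_n)=n+1$. Once $\tw(G_n)=n$ is established, the bound $\tin(G_n)\le n+1$ is free from the first half, so the task reduces to two claims: (a) $\tw(G_n)=n$, and (b) $\tin(G_n)\ge n+1$. Claim (a) should be mechanical for a well-chosen $G_n$: I would exhibit an explicit width-$n$ tree decomposition for the upper bound, and certify $\tw(G_n)\ge n$ either by displaying a $K_{n+1}$ minor and invoking \cref{lem:tw-minor}, or by producing a chordal completion of clique number $n+1$ and invoking \cref{thm:tw-via-chordal-graphs}.

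The real content, and the step I expect to be the main obstacle, is the lower bound (b): every tree decomposition of $G_n$ must have a bag whose induced subgraph contains an independent set of size $n+1$. It is worth isolating at the outset why several natural shortcuts fail, since this pins down what the construction must achieve. First, one cannot hope to realize $K_{n+1,n+1}$ as an induced minor and appeal to \cref{lem:tree-independence number induced-minor} together with \cref{tin-of-Knn}: an induced minor is in particular a minor, so a $K_{n+1,n+1}$ induced minor would force $\tw(G_n)\ge\tw(K_{n+1,n+1})=n+1$ by \cref{lem:tw-minor}, contradicting (a). Second, the closed-neighbourhood bound alone is too weak: \cref{lem:closednbr} guarantees only that \emph{some} bag contains $N[v]$ for \emph{one} vertex $v$, and since a graph of treewidth $n$ always has a vertex of degree at most $n$, one has $\min_{v}\alpha(G_n[N[v]])\le n$. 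Finally, the analogous line-graph example $L(G_n)$ from \cref{thm:line} is not itself an instance here, because its cliques $C_i$ have size $2(n-1)$, so $\tw(L(G_n))$ is of order $2n$ rather than $n$. Thus the lower bound of $n+1$ must be genuinely global, and the construction must be sparse.

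The approach I would take for (b) is a proof by contradiction: assume a tree decomposition $(T,\beta)$ in which every bag has independence number at most $n$, and exploit the Helly property of subtrees of a tree (pairwise-intersecting subtrees share a common node) to pass from a ``global linkage'' in $G_n$ to a single bag carrying an $(n+1)$-element independent set. The crux — and the genuinely delicate point of the whole theorem — is that this cannot be arranged by naively forcing one fixed independent set $I$ of size $n+1$ into a common bag: a separating bag $X$ with $\alpha(G_n[X])\le n$ can still split $I$, since only $|I\cap X|\le\alpha(G_n[X])\le n$ of its vertices lie in the separator while the rest are free to fall on opposite sides. Consequently the argument must either make $I$ so highly linked that no independence-$\le n$ bag can separate it, or, more likely, track an invariant guaranteeing that \emph{some} (not necessarily fixed) size-$(n+1)$ independent set always survives in a bag. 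Balancing this unavoidability against the sparsity demand $\tw(G_n)=n$ (which forbids a $K_{n+1}$ subgraph and any $K_{n+1,n+1}$ minor) is exactly what a correct choice of gadget for $G_n$ must accomplish, and it is where I expect essentially all of the difficulty to reside.
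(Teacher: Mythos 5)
The first half of your argument is fine: the inequality $\tin(G)\le\tw(G)+1$ really does follow immediately from $\alpha(G[\beta(t)])\le|\beta(t)|$ applied to a minimum-width decomposition, and this matches the remark in the text that the inequality ``follows immediately from the definitions.'' Note, however, that the paper does not actually prove \cref{thm:tin-tw} at all --- it imports the statement, including the sharpness claim, from Theorem~3.8 of~\cite{dallard2022firstpaper} --- so there is no in-paper argument for the hard direction to compare you against; the closest analogue is the sharpness construction in \cref{thm:line}, which you correctly identify as not directly reusable here.

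The genuine gap is that the sharpness half of your submission is a plan rather than a proof. You never exhibit a concrete family $G_n$, and consequently neither of your two claims (a) $\tw(G_n)=n$ nor (b) $\tin(G_n)\ge n+1$ is established. Your meta-observations are accurate and show you understand where the difficulty lies: an induced $K_{n+1,n+1}$ minor is unusable because \cref{lem:tw-minor} would force $\tw(G_n)\ge n+1$; \cref{lem:closednbr} is too weak because a treewidth-$n$ graph has a vertex $v$ with $\alpha(G[N[v]])\le n$; and the graphs $G_n$ from \cref{thm:line} certify sharpness for $\tin(L(G_n))$, not for $\tin(G_n)$. But diagnosing why shortcuts fail does not substitute for the construction itself, and the proposed Helly-property strategy is left entirely unexecuted --- you yourself point out that a bag $X$ with $\alpha(G_n[X])\le n$ could a priori separate any fixed independent set of size $n+1$, and you do not supply the gadget or the invariant that would rule this out. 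As it stands, the theorem's substantive content (existence, for every $n\ge 2$, of a graph with $\tw=n$ and $\tin=n+1$) is not proved. To repair this you must either reproduce the explicit construction and lower-bound argument from~\cite{dallard2022firstpaper}, or devise your own family and carry out the bag-by-bag analysis in the style of the proof of \cref{biclique}, where \cref{lem:closednbr} is combined with induction and the connectivity of the subtrees $\{t : v\in\beta(t)\}$ to force a large independent set into a single bag.
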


Moreover, we are not aware of any graph $G$ with at least one edge such that $\tin(L(G))<\tin(G)$.
Note that if the inequality $\tin(G) \le \tin(L(G))$ holds for all graphs with at least one edge, then this would relate the inequalities from \cref{thm:line,thm:tin-tw} in a stronger sense: the former would imply the latter.

An \emph{$(n\times m)$-grid} is the graph with vertex set~${[n] \times [m]}$ and edge set 
\[
    {\{ \{ (i,j), (i,j+1) \} \mid i \in [n], j \in [m-1] \} \cup \{ \{ (i,j), (i+1,j) \} \mid i \in [n-1], j \in [m] \}}.
\] 
The \emph{elementary $k$-wall} for~${k \geq 3}$, is obtained from the ${(k \times 2k)}$-grid~$G_{k,2k}$ by deleting every odd edge in every odd column and every even edge in every even column, and then deleting all degree-one vertices.
See \cref{fig:wall} for an example.

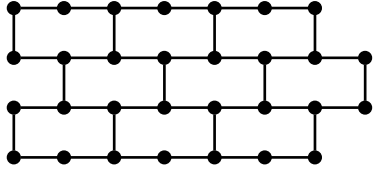
\begin{figure}
\centering
\begin{tikzpicture}[scale=1.1]
			\pgfdeclarelayer{background}
			\pgfdeclarelayer{foreground}
			\pgfsetlayers{background,main,foreground}

			\foreach \x in {1,...,7}
			\foreach \y in {1,4}
			{	
					\node[v:main,color=black] (v_\x_\y) at (\x*0.6,\y*0.6){};
			}

            \foreach \x in {1,...,8}
			\foreach \y in {2,3}
			{	
				\node[v:main,color=black] (v_\x_\y) at (\x*0.6,\y*0.6){};
			}
			
			\begin{pgfonlayer}{background}
			
				\foreach \y in {1,4}
				{
					\draw[e:main,color=black] (v_1_\y) to (v_7_\y);
				}

                \foreach \y in {2,3}
				{
					\draw[e:main,color=black] (v_1_\y) to (v_8_\y);
				}
			
				\foreach \x in {1,3,5,7}
				{
					\draw[e:main,color=black] (v_\x_1) to (v_\x_2);
					\draw[e:main,color=black] (v_\x_3) to (v_\x_4);
				}
			
				\foreach \x in {2,4,6,8}
				{
					\draw[e:main,color=black] (v_\x_2) to (v_\x_3);
				}
			
			\end{pgfonlayer}
		\end{tikzpicture}
  
    \caption{The elementary $4$-wall.}
    \label{fig:wall}
\end{figure}

As mentioned in the introduction, \cref{prop:line-graphs}, which we restate here for convenience, establishes \cref{conj:main} for subclasses of the class of line graphs.

\propLineGraphs*
\begin{proof} As remarked earlier, the implication $2 \Rightarrow 1$ follows from \cite{dallard2022firstpaper}. 
The implication $3 \Rightarrow 2$ follows from \cref{thm:line}. Finally, consider $1 \Rightarrow 3$. 
Let $\mathcal{G}$ be a class of graphs such that $L(\mathcal{G})$ is $(\tw,\omega)$-bounded. 
Suppose, to the contrary, that $\mathcal{G}$ has unbounded treewidth. 
By the Grid-Minor Theorem \cite{MR0854606}, there exists a function $f\colon \mathbb{N} \rightarrow \mathbb{N}$ such that, for each $k \in \mathbb{N}$, every graph of treewidth at least $f(k)$ contains a subdivision of the elementary $k$-wall as a subgraph. 
Let $G_{f(k)} \in \mathcal{G}$ be a graph of treewidth at least $f(k)$. 
Then, $G_{f(k)}$ contains a subdivision $W_k^{*}$ of the elementary $k$-wall $W_k$ as a subgraph.
This implies that $L(G_{f(k)})$ contains $L(W_k^{*})$ as an induced subgraph.
Note that $L(W_k)$ is a minor of $L(W_k^*)$ and hence $\tw(L(W_k^{*})) \geq \tw(L(W_k))$ by \cref{lem:tw-minor}.
Hence, $\tw(L(W_k^{*})) \geq \tw(L(W_k)) \geq (\tw(W_k)+1)/2 -1 \geq k/2 -1$, where the second inequality follows from~\cite{MR3820302}.
However, since $\omega(L(W_k^{*})) \leq 3$, we obtain a contradiction with the fact that $L(\mathcal{G})$ is $(\tw,\omega)$-bounded.
\end{proof}

A similar statement holds for the intersection graphs of connected subgraphs in some graphs.
For a class $\mathcal{G}$ of graphs, let $I(\mathcal{G})$ be the class of \emph{region intersection graphs} $G$ that can be obtained as follows.
For $H\in \mathcal{G}$ and a family $\{H_j\}_{j\in J}$ of connected subgraphs of $H$, let $G$ be the graph with vertex set $J$ in which two distinct vertices $i$ and $j$ are adjacent if and only if $H_i$ and $H_j$ have a vertex in common.

We remark that $L(\mathcal{G})$ is a subclass of $I(\mathcal{G})$ because to obtain the line graph of a graph, we can take $\{H_j\}_{j\in J}$ as the collection of all connected subgraphs with single edges.
Region intersection graphs have been studied as a common generalization of many classes of geometric intersection graphs (see~\cite{DBLP:conf/innovations/Lee17}).

\begin{theorem}
Let $\mathcal{G}$ be a class of graphs. 
Then, the following statements are equivalent.
\begin{enumerate}
\item The class $I(\mathcal{G})$ is $(\tw,\omega)$-bounded.
\item The class $I(\mathcal{G})$ has bounded tree-independence number.
\item The class $\mathcal{G}$ has bounded treewidth.
\end{enumerate}
\end{theorem}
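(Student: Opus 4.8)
The plan is to establish the cyclic chain of implications $3 \Rightarrow 2 \Rightarrow 1 \Rightarrow 3$, mirroring the proof of \cref{prop:line-graphs} but replacing the appeal to \cref{thm:line} by its underlying ingredient \cref{cor:intersection-representation}, and exploiting the fact---already recorded just before the statement---that $L(\mathcal{G})$ is a subclass of $I(\mathcal{G})$.

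For $3 \Rightarrow 2$, I would argue directly from \cref{cor:intersection-representation}. Suppose $\mathcal{G}$ has treewidth bounded by some constant $c$. Every $G \in I(\mathcal{G})$ is, by definition, the region intersection graph of a family $\{H_j\}_{j \in J}$ of connected subgraphs of some $H \in \mathcal{G}$; \cref{cor:intersection-representation} then gives $\tin(G) \le \tw(H) + 1 \le c + 1$. Hence $I(\mathcal{G})$ has tree-independence number at most $c+1$.

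For $2 \Rightarrow 1$, I would invoke the general relationship between tree-independence number and $(\tw,\omega)$-boundedness from~\cite{dallard2022firstpaper}: any single bag of independence number at most $k$ and clique number at most $\omega$ has size bounded by $R(k+1,\omega+1)$ via Ramsey's theorem, so a tree decomposition witnessing $\tin \le k$ has width bounded in terms of $k$ and $\omega$. Since $\tin$ is monotone under induced subgraphs (an induced subgraph is an induced minor, so \cref{lem:tree-independence number induced-minor} applies), a uniform bound $\tin(G) \le k$ across $I(\mathcal{G})$ transfers to every induced subgraph and thus yields a $(\tw,\omega)$-binding function for the class.

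The one implication deserving attention is $1 \Rightarrow 3$, and here the key observation is that it reduces to the already-proven line-graph case rather than requiring a fresh Grid-Minor argument. I would use that $L(\mathcal{G}) \subseteq I(\mathcal{G})$, taking the single-edge subgraphs as the representing family. Since $(\tw,\omega)$-boundedness is inherited by subclasses---the definition quantifies over all members of the class together with all their induced subgraphs, so any $(\tw,\omega)$-binding function for $I(\mathcal{G})$ is simultaneously one for $L(\mathcal{G})$---the assumption that $I(\mathcal{G})$ is $(\tw,\omega)$-bounded implies that $L(\mathcal{G})$ is $(\tw,\omega)$-bounded. Applying the implication $1 \Rightarrow 3$ of \cref{prop:line-graphs} then yields that $\mathcal{G}$ has bounded treewidth, as required. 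I do not anticipate a genuine obstacle: the whole statement follows from assembling \cref{cor:intersection-representation}, the standard $\tin$-versus-$(\tw,\omega)$ comparison, and \cref{prop:line-graphs}. The only point demanding care is checking that $(\tw,\omega)$-boundedness does pass to the subclass $L(\mathcal{G})$, which is immediate from unwinding the definition.
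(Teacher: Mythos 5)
Your proposal is correct and follows essentially the same route as the paper: $3 \Rightarrow 2$ via \cref{cor:intersection-representation}, $2 \Rightarrow 1$ via the general result from~\cite{dallard2022firstpaper}, and $1 \Rightarrow 3$ by reducing to \cref{prop:line-graphs} through the inclusion $L(\mathcal{G}) \subseteq I(\mathcal{G})$ (the paper phrases this last step contrapositively, but it is the same argument).
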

\begin{proof}
    The implication $2 \Rightarrow 1$ follows from \cite{dallard2022firstpaper} and $3 \Rightarrow 2$ follows from \cref{cor:intersection-representation}. 
    To show $1 \Rightarrow 3$, suppose that $\mathcal{G}$ has unbounded treewidth.
    Then by~\cref{prop:line-graphs}, the class $L(\mathcal{G})$ of line graphs of graphs in $\mathcal{G}$ is not $(\tw, \omega)$-bounded. 
    Since $L(\mathcal{G})\subseteq I(\mathcal{G})$, the class $I(\mathcal{G})$ is also not $(\tw, \omega)$-bounded. 
\end{proof}

\subsection{Tree-independence number of \texorpdfstring{$L(K_{n,n})$ and $L(K_n)$}{L(K\_\{n,n\}) and L(K\_\{n\})}}

We now determine the exact values of the tree-independence number of line graphs of complete graphs and line graphs of complete bipartite graphs.
To put these results in perspective, recall that either exact or approximate values of sim-width and treewidth of these graphs are known and that any graph $G$ satisfies $\simw(G)\le \tin(G)\le \tw(G)+1$.

For treewidth, the exact values for $L(K_n)$ and $L(K_{n,n})$ (with $n\ge 3$) were settled by Harvey and Wood~\cite{HW15} and Lucena~\cite{Luc07}, respectively:
\[ \tw(L(K_{n})) = \left\lceil\frac{n^2}{4}+\frac{n}{2}-2\right\rceil,
\qquad
\tw(L(K_{n,n})) = \frac{n^2}2 + \frac n2 - 1\,.
\]
The latter result was extended by Harvey and Wood~\cite{MR3820302}, who showed that $\tw(L(K_{m,n}))$ has order $mn$.

For sim-width, Brettell et al.~\cite{brettell2023comparing} showed that $\simw(L(K_{m,n})) = \lceil m/3\rceil$, for any two integers $m$ and $n$ such that $6 < m \leq n$, and used this result to show that for all $n>12$, \[\left\lceil\frac{n}{6}\right\rceil\le \simw(L(K_n))\le \left\lceil\frac{2n}{3}\right\rceil\,.\]

In the next two results, we show that for line graphs of complete graphs and line graphs of complete bipartite graphs the upper bound on the tree-independence number given by the independence number is achieved with equality.

\begin{proposition}\label{biclique}
    For any two positive integers $m \leq n$, $\tin(L(K_{m,n})) = \alpha(L(K_{m,m})) = m$.
\end{proposition}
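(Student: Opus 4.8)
The plan is to prove the two equalities separately: the identity $\alpha(L(K_{m,m}))=m$ is immediate, and the value of $\tin(L(K_{m,n}))$ splits into a routine upper bound and a more delicate lower bound. First I would record the standard dictionary for line graphs: an independent set of $L(K_{m,n})$ is exactly a matching of $K_{m,n}$, and since $m\le n$ a maximum matching of $K_{m,n}$ has exactly $m$ edges (a perfect matching of the smaller side). Hence $\alpha(L(K_{m,n}))=\alpha(L(K_{m,m}))=m$. The upper bound is then free from the inequality $\tin(G)\le\alpha(G)$ noted in \cref{sec:prelims}: indeed $\tin(L(K_{m,n}))\le\alpha(L(K_{m,n}))=m$.

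For the lower bound I would first reduce to the square case. As $m\le n$, the graph $K_{m,m}$ is an induced subgraph of $K_{m,n}$, so $L(K_{m,m})$ is an induced subgraph, and in particular an induced minor, of $L(K_{m,n})$; by \cref{lem:tree-independence number induced-minor} it therefore suffices to prove $\tin(L(K_{m,m}))\ge m$. Here one must resist the temptation to simply realise $K_{m,m}$ as an induced minor of $L(K_{m,m})$ and invoke \cref{tin-of-Knn}. Writing $L(K_{m,m})$ as the rook's graph on $[m]\times[m]$ (two edges being adjacent exactly when they share an endpoint, i.e.\ lie in a common row or column), any branch-set model of $K_{m,m}$ forces the $m$ branch sets on each side to occupy pairwise disjoint rows \emph{and} pairwise disjoint columns; with only $m$ rows and $m$ columns available this makes every branch set a single cell, and complete bipartite adjacency between singletons is impossible once $m\ge 3$. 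So a direct argument on tree decompositions is unavoidable in the tight case.

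The route I would take passes through König's theorem. In the rook's graph a bag $\beta(t)$ satisfies $\alpha(G[\beta(t)])\le m-1$ precisely when its cells can be covered by at most $m-1$ lines; equivalently, viewing the bag as a set of edges of $K_{m,m}$, its maximum matching has size at most $m-1$, i.e.\ it admits a vertex cover of size at most $m-1$. Assuming for contradiction a tree decomposition all of whose bags are coverable by at most $m-1$ lines, I would induct on $m$. \cref{lem:closednbr} yields a cell $v=(i,j)$ and a node $t_0$ with $N[v]=R_i\cup C_j\subseteq\beta(t_0)$, where $R_i,C_j$ denote the row and column of $v$; deleting row $i$ and column $j$ turns $G$ into the rook's graph on $(m-1)\times(m-1)$ cells, and restricting the decomposition there the induction hypothesis supplies a bag containing a transversal $I$ of size $m-1$ (a partial permutation avoiding row $i$ and column $j$).

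The hard part — and the step I expect to be the main obstacle — is to force $I$ and the single completing cell into one bag. Since $I$ already meets every remaining row and column, the only way to enlarge it to a full transversal of size $m$ is to adjoin $(i,j)$, which is nonadjacent to all of $I$; thus no edge of $G$ ties $(i,j)$ to $I$, and $(i,j)$ lives in $\beta(t_0)$ while $I$ may sit in a distant bag. I expect this to require exploiting the connectivity (subtree) condition rather than edges: the subtrees $\{t:R_i\subseteq\beta(t)\}$ and $\{t:C_j\subseteq\beta(t)\}$ are forced to meet the subtree of bags containing $(i,j)$, so they ``cross'' there, and tracking how the transversal $I$ is routed relative to this crossing should pin down a bag containing $I$ together with a cell completing it, giving $\alpha(G[\beta(t)])\ge m$ and the required contradiction. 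Carrying out this routing argument cleanly is the crux of the proof.
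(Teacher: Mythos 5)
Your upper bound, the reduction to the square case, and the first half of the inductive lower bound (invoking \cref{lem:closednbr} to pin $N[v]$ into a bag $\beta(t_0)$, deleting the row and column of $v$, and extracting from the induction hypothesis a bag containing an independent set $I$ of size $m-1$ avoiding that row and column) all match the paper's argument. But the step you yourself flag as ``the crux'' --- forcing a size-$m$ independent set into a single bag --- is exactly where your proposal stops being a proof: you describe a hope that ``tracking how the transversal $I$ is routed relative to this crossing should pin down a bag,'' without carrying it out. That step is the entire content of the lower bound, so as written there is a genuine gap.

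Moreover, your framing of the endgame points in a slightly wrong direction. You assert that ``the only way to enlarge $I$ to a full transversal of size $m$ is to adjoin $(i,j)$,'' but the independent set of size $m$ need not be $I\cup\{(i,j)\}$: writing $v=v_{m,m}$ and $I=\{v_{i,i}:1\le i\le m-1\}$, one can also \emph{swap} a single cell $v_{i,i}$ for the two cells $v_{i,m}$ and $v_{m,i}$, which lie in $N(v)$ and are independent from the rest of $I$. The paper's proof hinges on this: it chooses the node $t'$ \emph{closest to} $t_0$ among those whose bag contains such an $I$, and then either (a) some pair $v_{i,m},v_{m,i}$ or the vertex $v_{m,m}$ itself lies in $\beta(t')$, yielding an independent set of size $m$ in that bag via the swap or the adjunction, or (b) neither holds, in which case one looks at the neighbor $t''$ of $t'$ toward $t_0$, finds $v_{i,i}\in\beta(t')\setminus\beta(t'')$ by minimality, and derives a contradiction from the connectivity condition applied to the edge $v_{i,i}v_{i,m}$ (since $v_{i,m}\in\beta(t_0)$ but $v_{i,m}\notin\beta(t')$, every bag containing that edge lies on the far side of $t''$). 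Your ``subtrees must cross'' heuristic is in the right spirit, but without the extremal choice of $t'$ and without the swap move the case analysis does not close; so the routing argument is not a routine detail but the missing proof.
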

\begin{proof}
Let $G = L(K_{m,n})$ for some positive integers $m \leq n$.
More precisely, let
$V(G)=\{v_{i,j}\mid 1\le i \leq m, 1 \leq j\le n\}$, and
$E(G)=\{v_{i,j} v_{k,\ell}\mid (i=k \text{ and } j\neq \ell) \text{ or } (i\neq k\text{ and } j= \ell)\}$.
Note that $\alpha(G) = m$, since every independent set in $G$ corresponds to a matching in $K_{m,n}$ and the maximum number of edges in a matching in $K_{m,n}$ is $m$.
Consequently, $\tin(G) \leq \alpha(G) = m$.

We show that $\tin(G) \geq m$ by induction on $m$.
If $m=1$, the statement holds trivially.
Suppose that $m \geq 2$.
Let $H$ be an induced subgraph of $G$ isomorphic to $L(K_{m,m})$.
Let $\mathcal{T} = (T,\beta)$ be an arbitrary tree decomposition of $H$.
Our goal is to show that $\alpha(\mathcal{T})\geq m$, which by \cref{lem:tree-independence number induced-minor} would imply that $\tin(G) \geq \tin(H) \geq m$.

By \cref{lem:closednbr}, there is a vertex $v\in V(H)$ and a node $t\in V(T)$ such that $N[v]\subseteq \beta(t)$. 
By symmetry, we may assume without loss of generality that $v = v_{m,m}$.
Note that $H - N[v]$ is isomorphic to $L(K_{m-1,m-1})$.
By the induction hypothesis, we get that $\tin(H-N[v])= \tin(L(K_{m-1,m-1})) \ge m-1$.
This implies that $\mathcal{T}$ has a bag containing $m-1$ pairwise nonadjacent vertices from $H-N[v]$.
Let $t' \in V(T)$ be the node of $T$ that is closest to $t$ among all the nodes whose bag contains a set $Z$ of $m-1$ pairwise nonadjacent vertices in $H-N[v]$.
By the symmetry properties of $L(K_{m-1,m-1})$, we may assume without loss of generality that $Z = \{v_{i,i}\mid 1\le i\le m-1\}$.
Assume that $v_{i,m}, v_{m,i} \in \beta(t')$ for some $i \in \{1,\dots,m-1\}$. 
Then the set $Z' = (Z \setminus \{v_{i,i}\}) \cup \{v_{i,m}, v_{m,i}\}$ forms an independent set of size $m$ in $H$ such that $Z'\subseteq \beta(t')$, and hence $\alpha(\mathcal{T})\geq m$.
Similarly, if $v_{m,m}\in \beta(t')$, then the set $Z' = Z \cup \{v_{m,m}\}$ is an independent set of size $m$ in $H$ such that $Z'\subseteq \beta(t')$, and hence $\alpha(\mathcal{T})\geq m$.
We may thus assume that for each $i \in \{1,\dots,m\}$, at least one of $v_{i,m}$ and $v_{m,i}$ does not belong to $\beta(t')$.
In particular, $t'\neq t$.

Let $t''$ be the neighbor of $t'$ on the unique $(t',t)$-path in $T$ (possibly $t'' = t$).
The definition of $t'$ implies that $Z\not\subseteq \beta(t'')$, that is, there exists some $i\in \{1,\ldots,m-1\}$ such that $v_{i,i}\not\in \beta(t'')$.
We already know that at least one of $v_{i,m}$ and $v_{m,i}$ does not belong to $\beta(t')$. 
We may assume by symmetry that $v_{i,m}\not\in \beta(t')$.
Since $v_{i,i}v_{i,m} \in E(H)$, there exists a bag of $\mathcal{T}$ that contains this edge.
As $v_{i,m}\in N_{H}[v_{m,m}]\subseteq \beta(t)$ but $v_{i,m}\not\in \beta(t')$, the vertex $v_{i,m}$ cannot belong to any bag corresponding to a node of the subtree of $T-t't''$ containing $t'$.
Consequently, every bag of $\mathcal{T}$ that contains the edge $v_{i,i}v_{i,m}$ corresponds to a node of the subtree of $T-t't''$ containing $t''$.
But since $v_{i,i}\in \beta(t')\setminus \beta(t'')$, this contradicts the fact that bags of $\mathcal{T}$ containing the vertex $v_{i,i}$ form a connected subtree of $T$.
It follows that  this last case is not possible and we conclude that $\alpha(\mathcal{T})\geq m$ and consequently that $\tin(G) \geq \tin(H) \geq m$.
\end{proof}

\begin{proposition}\label{prop:LKn}
    For every positive integer $n$, $\tin(L(K_n)) = \alpha(L(K_n)) = \lfloor \frac{n}{2}\rfloor$.
\end{proposition}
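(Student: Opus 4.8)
The plan is to mirror the inductive strategy already used for \cref{biclique}. The starting point is the matching interpretation of $L(K_n)$: an independent set of $L(K_n)$ is exactly a matching of $K_n$, so $\alpha(L(K_n)) = \lfloor n/2\rfloor$, and the general inequality $\tin(G)\le\alpha(G)$ immediately gives the upper bound $\tin(L(K_n)) \le \lfloor n/2\rfloor$. It then remains to prove $\tin(L(K_n)) \ge \lfloor n/2\rfloor$, which I would establish by induction on $n$ with step two: deleting the closed neighborhood of a vertex of $L(K_n)$ removes precisely the edges meeting the two endpoints of the corresponding edge of $K_n$, so $G-N[v]\cong L(K_{n-2})$. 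The base cases $n\le 3$ (the empty graph, $K_1$, and $K_3$) are immediate, since any nonempty graph has $\tin\ge 1$.

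For the inductive step, let $n\ge 4$, set $m=\lfloor n/2\rfloor$, and let $\mathcal{T}=(T,\beta)$ be an arbitrary tree decomposition of $G=L(K_n)$; the goal is $\alpha(\mathcal{T})\ge m$. By \cref{lem:closednbr} there are a vertex $v$ and a node $t$ with $N[v]\subseteq\beta(t)$, and by edge-transitivity of $K_n$ I may assume $v=\{n-1,n\}$. Since $G-N[v]\cong L(K_{n-2})$ on vertex set $[n-2]$, restricting $\mathcal{T}$ to this induced subgraph yields a tree decomposition of $L(K_{n-2})$, so the induction hypothesis $\tin(L(K_{n-2}))\ge m-1$ produces a node whose bag contains an independent set $Z$ of size $m-1$ lying entirely in $G-N[v]$; equivalently, $Z=\{e_1,\dots,e_{m-1}\}$ with $e_k=\{a_k,b_k\}$ is a matching of size $m-1$ in $K_{n-2}$.

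Following the template of \cref{biclique}, I would choose $t'$ to be the node closest to $t$ whose bag contains such a set $Z$. The key tool is a pair of \emph{extension moves} enlarging $Z$ to a size-$m$ independent set inside a single bag: (i) if $v=\{n-1,n\}\in\beta(t')$, then $Z\cup\{v\}$ is a matching of size $m$; and (ii) if for some $k$ both $\{a_k,n-1\}$ and $\{b_k,n\}$ lie in $\beta(t')$, then replacing $e_k$ by these two edges gives a matching of size $m$ (the vertices $n-1,n$ are fresh and $a_k,b_k$ are freed). In either case $\alpha(\mathcal{T})\ge m$ and we are done; moreover, since $v\in N[v]\subseteq\beta(t)$, move~(i) settles the case $t'=t$, so henceforth $v\notin\beta(t')$ and $t'\ne t$.

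The heart of the argument, and the step I expect to be the main obstacle, is ruling out the remaining case: $v\notin\beta(t')$ and, for every $k$, at least one of $\{a_k,n-1\},\{b_k,n\}$ is absent from $\beta(t')$. Here I would let $t''$ be the neighbor of $t'$ on the $(t',t)$-path; minimality of $t'$ forces $Z\not\subseteq\beta(t'')$, hence some $e_k\in\beta(t')\setminus\beta(t'')$, and for that index some extension vertex, say $p=\{a_k,n-1\}$, satisfies $p\in N[v]\subseteq\beta(t)$ yet $p\notin\beta(t')$. Connectivity of the bag-subtrees then confines all bags containing $e_k$ to the $t'$-side of the edge $t't''$ and all bags containing $p$ to the $t''$-side, whereas $e_k$ and $p$ are adjacent in $G$ (they share the endpoint $a_k$) and so must appear together in some bag — a contradiction. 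The delicate point is exactly this separation bookkeeping; note that parity of $n$ never intervenes, since both extension moves and the separation argument are insensitive to whether $Z$ saturates $[n-2]$. This yields $\alpha(\mathcal{T})\ge m$, completing the induction and hence the equality $\tin(L(K_n))=\lfloor n/2\rfloor$.
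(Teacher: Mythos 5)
Your argument is correct, but it takes a different route from the paper. The paper's proof of this proposition is essentially a two-line reduction: since $K_n$ contains $K_{\lfloor n/2\rfloor,\lfloor n/2\rfloor}$ as a subgraph, $L(K_n)$ contains $L(K_{\lfloor n/2\rfloor,\lfloor n/2\rfloor})$ as an induced subgraph, and the lower bound $\tin(L(K_n))\ge\lfloor n/2\rfloor$ then follows directly from \cref{biclique} together with monotonicity under induced subgraphs; the upper bound is the same matching observation you use. You instead re-run the entire inductive machinery of \cref{biclique} directly on $L(K_n)$, stepping down by two via $G-N[v]\cong L(K_{n-2})$, locating a nearest bag containing a matching of size $m-1$ avoiding $N[v]$, and deriving the contradiction from the connectivity of the bag-subtrees across the edge $t't''$. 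I checked the details and they go through: the two extension moves do produce matchings of size $m$, the negation of move (ii) is used consistently (only the pairing $\{a_k,n-1\},\{b_k,n\}$ is ever needed), and the separation bookkeeping is sound because the subtree of bags containing $p$ meets $t$ but not $t'$ while the subtree for $e_k$ meets $t'$ but not $t''$, yet the two vertices are adjacent. What your approach buys is a self-contained proof that does not rely on \cref{biclique}, and it confirms that the closed-neighborhood-deletion technique transfers from $L(K_{m,m})$ to $L(K_n)$; what it costs is a full page of argument where the paper spends two sentences, since the bipartite case already encapsulates all the work. If \cref{biclique} is available, the reduction is clearly preferable.
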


\begin{proof}
To prove that $\tin(L(K_n))\ge \lfloor \frac{n}{2}\rfloor$, it is enough to see that since $K_n$ contains $K_{\lfloor \frac{n}{2}\rfloor,\lfloor \frac{n}{2}\rfloor}$ as a subgraph, $L(K_n)$ contains $L(K_{\lfloor \frac{n}{2}\rfloor,\lfloor \frac{n}{2}\rfloor})$ as an induced subgraph.
By \cref{biclique}, the inequality follows.
To prove that $\tin(L(K_n))\le \lfloor \frac{n}{2}\rfloor$, we use the fact that every independent set in $L(K_n)$ corresponds exactly to a matching of $K_n$. 
Since the largest size of a matching in $K_n$ is $\lfloor \frac{n}{2}\rfloor$ and the independence number of any graph is an upper bound on its tree-independence number, the result follows.   
\end{proof}

Since sim-width of any graph is bounded from above by its tree-independence number, \cref{prop:LKn} leads to the following improvement of the aforementioned inequality $\simw(L(K_n))\le \left\lceil\frac{2n}{3}\right\rceil$ due to Brettell et al.~\cite{brettell2023comparing}.

\begin{corollary}\label{cor:LKn}
    For every positive integer $n$, $\simw(L(K_n)) \le \lfloor \frac{n}{2}\rfloor$.
\end{corollary}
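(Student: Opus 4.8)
The plan is to combine two facts that are already available in the excerpt, so that the corollary becomes an immediate consequence. First, I would invoke the general inequality $\simw(G) \le \tin(G)$, valid for every graph $G$; this is the first inequality in the chain $\simw(G)\le \tin(G)\le \tw(G)+1$ recorded in the preliminaries, with references to~\cite[Lemma 5]{MR4563598} and~\cite[Theorem~2 or Theorem~15]{DBLP:conf/wg/BergougnouxKR23}. Second, I would appeal to \cref{prop:LKn}, which establishes the exact value $\tin(L(K_n)) = \lfloor n/2\rfloor$.

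Given these two ingredients, the proof requires only a single substitution: applying the inequality $\simw \le \tin$ to the graph $G = L(K_n)$ and then inserting the value supplied by \cref{prop:LKn} yields
\[
\simw(L(K_n)) \le \tin(L(K_n)) = \left\lfloor \frac{n}{2}\right\rfloor,
\]
which is exactly the claimed bound.

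I do not expect any genuine obstacle here, since both components of the argument are established earlier in the paper; the entire content of the statement is the observation that the sharp upper bound on tree-independence number proved in \cref{prop:LKn} transfers, through the parameter inequality $\simw \le \tin$, to an upper bound on sim-width. The only thing worth emphasizing in the write-up is the comparison with the literature: this $\lfloor n/2\rfloor$ bound improves upon the previously known estimate $\simw(L(K_n)) \le \lceil 2n/3\rceil$ due to Brettell et al.~\cite{brettell2023comparing}, which is the reason the corollary is stated at all.
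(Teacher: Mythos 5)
Your proposal is correct and coincides exactly with the paper's argument: the corollary is stated as an immediate consequence of the inequality $\simw(G)\le\tin(G)$ combined with the value $\tin(L(K_n))=\lfloor n/2\rfloor$ from \cref{prop:LKn}. Nothing further is needed.
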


\section{Tree-independence number of \texorpdfstring{$P_4$}{P₄}-free graphs\label{sec:p4-free}}

In this section, we discuss the special case of $P_4$-free graphs.
These graphs have been widely studied for their rich algorithmic properties.
This is mostly due to the fact that a graph is $P_4$-free if and only if it is a \emph{cograph} (see, e.g.,~\cite{MR619603}), where the class of cographs is defined as the smallest class of graphs containing the one-vertex graph that is closed under the disjoint union and join operations.
Furthermore, cographs are exactly the graphs of \textit{modular width} two~\cite{MR0807891,DBLP:conf/iwpec/GajarskyLO13}.
We show here that, while their tree independence number is unbounded, it equals the size of a largest induced $K_{d,d}$-subgraph for cographs, a number that can be computed in linear time.
\restateKdd*

We begin with the following simple observation.

\begin{observation}\label{disjoint-union}
Let $G$ be the disjoint union of graphs $G_1$ and $G_2$. Then $\tin(G) =  \max\{\tin(G_1),\tin(G_2)\}$.
\end{observation}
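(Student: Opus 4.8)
The plan is to prove the two inequalities $\max\{\tin(G_1),\tin(G_2)\}\le \tin(G)$ and $\tin(G)\le \max\{\tin(G_1),\tin(G_2)\}$ separately. For the lower bound, I would observe that each of $G_1$ and $G_2$ is an induced subgraph of $G$ (indeed, a union of connected components), and an induced subgraph is in particular an induced minor, obtained by deleting the vertices of the other summand and performing no edge contractions. Hence \cref{lem:tree-independence number induced-minor} immediately gives $\tin(G_i)\le \tin(G)$ for each $i\in\{1,2\}$, and therefore $\max\{\tin(G_1),\tin(G_2)\}\le \tin(G)$.

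For the upper bound, the plan is to construct an explicit tree decomposition of $G$ by combining optimal decompositions of the two summands. Let $\mathcal{T}_i=(T_i,\beta_i)$ be a tree decomposition of $G_i$ with $\alpha(\mathcal{T}_i)=\tin(G_i)$, for $i\in\{1,2\}$. I would form $T$ from the disjoint union of $T_1$ and $T_2$ by adding a single edge joining an arbitrary node $t_1\in V(T_1)$ to an arbitrary node $t_2\in V(T_2)$, so that $T$ is a tree, and keep all bags unchanged, setting $\beta(t)=\beta_i(t)$ whenever $t\in V(T_i)$.

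It then remains to verify that $(T,\beta)$ is a valid tree decomposition and to bound its independence number. The coverage of vertices and edges is inherited directly from $\mathcal{T}_1$ and $\mathcal{T}_2$, using the crucial fact that $G$ has no edges between $V(G_1)$ and $V(G_2)$, so every edge of $G$ already appears in some $\beta_i$. The only axiom requiring a moment's care is connectivity: for a vertex $v\in V(G_i)$, the nodes whose bags contain $v$ all lie in $T_i$ and form a subtree of $T_i$, which remains a subtree of $T$ since we only attached $T_1$ and $T_2$ to one another by a single edge. Finally, each bag $\beta(t)$ is entirely contained in one summand, so $\alpha(G[\beta(t)])=\alpha(G_i[\beta_i(t)])\le \tin(G_i)\le \max\{\tin(G_1),\tin(G_2)\}$; taking the maximum over all nodes yields $\tin(G)\le \max\{\tin(G_1),\tin(G_2)\}$, completing the proof.

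There is no genuine obstacle here: this is a routine structural observation, and the entire content lies in the straightforward verification of the decomposition axioms for the joined tree. The only point deserving attention is the degenerate situation in which one summand is empty, which I would dispose of by noting that the disjoint union with the empty graph is the other graph and that the empty graph admits a single-node decomposition with empty bag, so that the claimed identity holds trivially in that case.
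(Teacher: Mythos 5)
Your proof is correct. The paper states this as an observation without any proof at all, and your argument — the lower bound via induced-subgraph monotonicity (\cref{lem:tree-independence number induced-minor}) and the upper bound by joining optimal tree decompositions of $G_1$ and $G_2$ with a single edge and keeping all bags unchanged — is precisely the routine verification the authors intended to leave to the reader, including the correct handling of the degenerate null-graph case.
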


Given a graph $G$, we denote by $\ibn(G)$ the \emph{induced biclique number} of $G$, that is, the largest nonnegative integer $n$ such that $G$ contains an induced subgraph isomorphic to $K_{n,n}$.

\begin{lemma}\label{ibn-lower-bound}
The induced biclique number of a graph is a lower bound on its tree-independence number.
More precisely, every non-null graph $G$ satisfies $\tin(G)\ge \max\{\ibn(G),1\}$.
\end{lemma}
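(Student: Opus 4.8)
The plan is to establish the two lower bounds $\tin(G)\ge 1$ and $\tin(G)\ge \ibn(G)$ separately, after which taking their maximum yields the claim. Both bounds follow quickly from material already developed in the preliminaries, so the proof will be short.

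For the bound $\tin(G)\ge 1$, I would argue directly from the definition of tree-independence number. Since $G$ is non-null it has at least one vertex $v$, and in any tree decomposition $\mathcal{T}=(T,\beta)$ of $G$ the vertex $v$ lies in some bag $\beta(t)$; hence $\alpha(G[\beta(t)])\ge 1$ and so $\alpha(\mathcal{T})\ge 1$. As this holds for every tree decomposition of $G$, we obtain $\tin(G)\ge 1$. This elementary bound is precisely what makes the $\max$ with $1$ necessary in the degenerate situation described below.

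For the bound $\tin(G)\ge \ibn(G)$, I would set $n\coloneqq \ibn(G)$. If $n=0$ the inequality is vacuous, so I may assume $n\ge 1$. By the definition of the induced biclique number, $G$ contains an induced subgraph $H$ isomorphic to $K_{n,n}$. The key observation is that an induced subgraph is in particular an induced minor, obtained by deleting the vertices outside $H$ and performing no contractions; therefore \cref{lem:tree-independence number induced-minor} applies and gives $\tin(K_{n,n})=\tin(H)\le \tin(G)$. Combining this with the exact value $\tin(K_{n,n})=n$ from \cref{tin-of-Knn} yields $\tin(G)\ge n=\ibn(G)$. Putting the two bounds together gives $\tin(G)\ge \max\{\ibn(G),1\}$, as desired.

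I do not anticipate a genuine obstacle, since the argument is essentially a direct application of the monotonicity of tree-independence number under induced minors together with the known value for complete bipartite graphs. The only point requiring a moment of care is recognizing that an induced subgraph is a special case of an induced minor, so that \cref{lem:tree-independence number induced-minor} is applicable; and separately accounting for the case $\ibn(G)=0$ (for instance when $G$ is edgeless), which is exactly why the first elementary bound is included.
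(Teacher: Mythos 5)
Your proposal is correct and matches the paper's proof, which likewise cites \cref{lem:tree-independence number induced-minor} (monotonicity under induced minors, hence under vertex deletion) together with $\tin(K_{n,n})=n$ from \cref{tin-of-Knn}. Your extra care with the trivial bound $\tin(G)\ge 1$ and the case $\ibn(G)=0$ is a harmless elaboration of what the paper leaves implicit.
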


\begin{proof}
This follows immediately from the fact that the tree-independence number cannot increase upon vertex deletion and that $\tin(K_{n,n}) = n$ (see~\cref{lem:tree-independence number induced-minor,tin-of-Knn}).
\end{proof}

The next result characterizes the tree-independence number of $P_4$-free graphs.

\begin{proposition}\label{tin-of-P4-tree-graphs}
Let $G$ be a $P_4$-free graph.
Then $\tin(G)=\max\{\ibn(G),1\}$.
\end{proposition}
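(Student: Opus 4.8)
The lower bound $\tin(G) \ge \max\{\ibn(G),1\}$ is already supplied by \cref{ibn-lower-bound} (the null graph being trivial), so the whole content is the reverse inequality $\tin(G) \le \max\{\ibn(G),1\}$. Since $P_4$-free graphs are exactly the cographs, I would prove this upper bound by structural induction on the cograph $G$, following its recursive construction from single vertices via disjoint unions and joins. The base case of a single vertex is immediate, with both sides equal to $1$.

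For a disjoint union $G = G_1 + G_2$, I would use that every induced $K_{n,n}$ with $n \ge 1$ is connected and hence lies in one component, so $\ibn(G) = \max\{\ibn(G_1),\ibn(G_2)\}$. Combined with \cref{disjoint-union} and the induction hypothesis, this gives $\tin(G) = \max\{\tin(G_1),\tin(G_2)\} = \max\{\ibn(G_1),\ibn(G_2),1\} = \max\{\ibn(G),1\}$.

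The join case $G = G_1 \ast G_2$ is the crux, and the decisive design choice is to assume without loss of generality that $\alpha(G_1) \le \alpha(G_2)$ and to build the decomposition around the factor with the \emph{smaller} independence number. Concretely, I would take a tree decomposition $(T,\beta_2)$ of $G_2$ with $\alpha(T,\beta_2) = \tin(G_2)$ and set $\beta(t) \coloneqq \beta_2(t) \cup V(G_1)$ for every $t \in V(T)$. One verifies routinely that $(T,\beta)$ is a tree decomposition of $G$: edges inside $G_1$ and edges across the join are covered because $V(G_1)$ lies in every bag, while edges inside $G_2$ and the connectivity condition are inherited from $(T,\beta_2)$. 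The key estimate is that, because every vertex of $G_1$ is adjacent to every vertex of $G_2$, any independent set contained in $\beta(t)$ lies entirely in $V(G_1)$ or entirely in $\beta_2(t) \cap V(G_2)$; hence $\alpha(G[\beta(t)]) \le \max\{\alpha(G_1),\tin(G_2)\}$, yielding $\tin(G) \le \max\{\alpha(G_1),\tin(G_2)\}$. Note that this join structure is exactly what lets me avoid the additive bound of \cref{lem:an-easy-lemma}.

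It then remains to compare this bound with $\max\{\ibn(G),1\}$. By the induction hypothesis $\tin(G_2) = \max\{\ibn(G_2),1\}$, and $\ibn(G_2) \le \ibn(G)$ since $G_2$ is an induced subgraph of $G$. Moreover, taking a maximum independent set of $G_1$ together with an independent set of equal size in $G_2$ (available since $\alpha(G_1) \le \alpha(G_2)$) exhibits an induced $K_{\alpha(G_1),\alpha(G_1)}$ in $G$, so $\ibn(G) \ge \alpha(G_1)$. Combining, $\tin(G) \le \max\{\alpha(G_1),\ibn(G_2),1\} \le \max\{\ibn(G),1\}$, which closes the induction. The only genuinely delicate point is the independence-number bound for the enlarged bags together with the choice to dump the smaller-$\alpha$ factor into the recursion; the rest is bookkeeping on how $\ibn$ and $\alpha$ behave under the two cograph operations.
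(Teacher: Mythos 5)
Your proposal is correct and follows essentially the same route as the paper's proof: induction over the cograph structure, the same handling of the disjoint-union case, and in the join case the same key move of adding the factor with the smaller independence number to every bag of a decomposition of the other factor, using the join structure to get a maximum rather than a sum and the inequality $\ibn(G)\ge\min\{\alpha(G_1),\alpha(G_2)\}$ to close the induction. No gaps.
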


\begin{proof}
By \cref{ibn-lower-bound}, it suffices to show that every $P_4$-free graph $G$ satisfies $\tin(G)\le \max\{\ibn(G),1\}$.
We show this using strong induction on $n = |V(G)|$.

The case $n = 1$ is trivial: a tree decomposition with a single bag containing the unique vertex has tree-independence number $1$.

Let $G$ be a $P_4$-free graph with $n>1$ vertices and assume that, for every $P_4$-free graph $G'$ with fewer than $n$ vertices, it holds that $\tin(G')\le \max\{\ibn(G'),1\}$.
Since $G$ has $n > 1$ vertices, there exist two $P_4$-free graphs $G_1$ and $G_2$ such that $G$ can be obtained either from the disjoint union of $G_1$ and $G_2$, or from the join of $G_1$ and $G_2$.
Assume first that $G$ is the disjoint union of $G_1$ and $G_2$.
In particular, $G$ is disconnected.
By the induction hypothesis, we have $\tin(G_i)\le \max\{\ibn(G_i),1\}$ for $i= 1,2$.
By \cref{disjoint-union}, we have $\tin(G)=\max\{\tin(G_1),\tin(G_2)\}$.
Since also $\ibn(G) = \max\{\ibn(G_1),\ibn(G_2)\}$, we obtain 
\begin{align*}
\tin(G) = &\max\{\tin(G_1),\tin(G_2)\}\\
\le&\max\{\max\{\ibn(G_1),1\}, \max\{\ibn(G_2),1\}\}  \\
=&\max\{\ibn(G_1),\ibn(G_2),1\}= \max\{\ibn(G),1\}\,,
\end{align*}
as desired.

Assume now that $G$ is the join of $G_1$ and $G_2$.
Since $G$ contains an induced subgraph isomorphic to $K_{1,1}$, we have $\max\{\ibn(G),1\} = \ibn(G)$. 
By the induction hypothesis, we have $\tin(G_i)\le \max\{\ibn(G_i),1\}$ for $i= 1,2$.
Every induced subgraph of $G$ isomorphic to some $K_{p,p}$ for $p\ge 1$ is either fully contained in $G_i$ for some $i\in \{1,2\}$ or has one set of the bipartition in $G_1$ and the other one in $G_2$.
The former ones show that $\ibn(G)\ge \ibn(G_i)$ for $i\in \{1,2\}$, and the latter ones that $\ibn(G)\ge \min\{\alpha(G_1), \alpha(G_2)\}$.
More precisely, we have \[\ibn(G) = \max\{\ibn(G_1),\ibn(G_2),\min\{\alpha(G_1),\alpha(G_2)\}\}\,.\]
By symmetry, we may assume without loss of generality that $\alpha(G_1)\le\alpha(G_2)$.
By the induction hypothesis, we have $\tin(G_2)\le \max\{\ibn(G_2),1\}$, and hence there exists a tree decomposition of $G_2$ with independence number at most $\max\{\ibn(G_2),1\}$.
Adding the vertices of $G_1$ to each bag of such a tree decomposition results in a tree decomposition of $G$ with independence number at most $\max\{\alpha(G_1),\max\{\ibn(G_2),1\}\}=\max\{\alpha(G_1),\ibn(G_2)\}$.
Therefore, \[\tin(G)\le \max\{\alpha(G_1),\ibn(G_2)\}
\le \max\{\ibn(G_1),\ibn(G_2),\alpha(G_1)\} = \ibn(G)\,,\]
which completes the proof.
\end{proof}

The recursive decomposition of a $P_4$-free graph into components of the graph or its complement all the way down to the copies of the one-vertex graph can be described using a decomposition tree called a \emph{cotree} and can be computed in linear time using modular decomposition, as shown by Corneil et al. \cite{MR0807891}.
Following the cotree from the leaves to the root yields a linear-time algorithm to compute the independence number of a $P_4$-free graph, using the recurrence relations $\alpha(G_1+G_2) = \alpha(G_1)+\alpha(G_2)$,
$\alpha(G_1\ast G_2) = \max\{\alpha(G_1),\alpha(G_2)\}$ (or, more precisely, their obvious generalizations to the disjoint unions and joins of any number of graphs) and the initial condition $\alpha(K_1) = 1$.
Consequently, the induced biclique number of a $P_4$-free graph can also be computed in linear time using the relations
\begin{align*}
&\ibn(G_1+G_2) = \max\{\ibn(G_1),\ibn(G_2)\}\\
&\ibn(G_1\ast G_2) = \max\{\ibn(G_1),\ibn(G_2),\min\{\alpha(G_1),\alpha(G_2)\}\}
\end{align*}
(or, more precisely, their generalizations to the disjoint unions and joins of any number $k\ge 2$ of graphs) and the initial condition $\ibn(K_1) = 0$.
Thus, \cref{tin-of-P4-tree-graphs} has the following consequence.

\begin{corollary}
The tree-independence number of a $P_4$-free graph can be computed in linear time.
\end{corollary}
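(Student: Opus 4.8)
The plan is to combine the structural characterization from \cref{tin-of-P4-tree-graphs} with a single bottom-up computation over the cotree. By \cref{tin-of-P4-tree-graphs}, every $P_4$-free graph $G$ satisfies $\tin(G)=\max\{\ibn(G),1\}$, so it suffices to compute $\ibn(G)$ in linear time and then take the maximum with $1$. First I would invoke the result of Corneil et al.~\cite{MR0807891} to compute, in linear time, the cotree of $G$: a rooted tree whose leaves are the vertices of $G$ and whose internal nodes are labelled as union or join nodes, encoding the recursive disjoint-union/join decomposition of $G$.

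Next I would perform a single post-order (leaves-to-root) traversal of the cotree, maintaining at each node $t$ the two quantities $\alpha(G_t)$ and $\ibn(G_t)$, where $G_t$ is the $P_4$-free graph represented by the subtree rooted at $t$. One must track $\alpha$ alongside $\ibn$ because the join recurrence for $\ibn$ refers to the independence numbers of the factors. At a leaf we initialize $\alpha(K_1)=1$ and $\ibn(K_1)=0$. At a union node with children $G_1,\dots,G_k$ we set $\alpha=\sum_i\alpha(G_i)$ and $\ibn=\max_i\ibn(G_i)$. At a join node we set $\alpha=\max_i\alpha(G_i)$ and apply the join recurrence for $\ibn$.

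The one point that requires care is the $k$-ary generalization of the join recurrence for $\ibn$, which the paper only states explicitly in the binary case. As there, any induced $K_{p,p}$ in a join $G_1\ast\cdots\ast G_k$ either lies entirely inside a single factor $G_i$, or splits into an independent set in some $G_i$ together with an independent set in some $G_j$ with $i\neq j$; the two sides of the biclique cannot straddle two factors, since vertices in distinct factors are automatically adjacent. Hence $\ibn(G_1\ast\cdots\ast G_k)=\max\{\max_i\ibn(G_i),\,m_2\}$, where $m_2=\max_{i\neq j}\min\{\alpha(G_i),\alpha(G_j)\}$ equals the second-largest value (counting multiplicities) among $\alpha(G_1),\dots,\alpha(G_k)$, and so is obtainable by a single scan for the two largest $\alpha$-values among the children.

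Finally I would bound the running time. The cotree has $\O(|V(G)|)$ nodes, and the work at each internal node is linear in its number of children, while the total number of child pointers is $\O(|V(G)|)$; thus the whole traversal runs in linear time, and the constant-time final step $\tin(G)=\max\{\ibn(G),1\}$ preserves this. The hard part is really just the bookkeeping in the previous paragraph: verifying the $k$-ary join formula and confirming that reducing each internal-node computation to a scan for the two largest child $\alpha$-values keeps the aggregate cost linear rather than quadratic.
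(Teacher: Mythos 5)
Your proposal is correct and follows essentially the same route as the paper: compute the cotree in linear time via Corneil et al., then do one bottom-up pass maintaining $\alpha$ and $\ibn$ with the union/join recurrences, and finish with $\tin(G)=\max\{\ibn(G),1\}$ from \cref{tin-of-P4-tree-graphs}. Your explicit $k$-ary join formula (the second-largest $\alpha$-value among the children) is exactly the "obvious generalization" the paper leaves implicit, and your accounting of the total work over all internal nodes correctly yields linearity.
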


\section{Conclusion}\label{sec:conc}

Towards a possible resolution of \cref{conj:main} for hereditary graph classes defined by a finite set of forbidden induced subgraphs, it suffices to prove \cref{conjecture:finitely-many-fis}.
In this paper we have made a first step towards \cref{conjecture:finitely-many-fis} by proving that the conjecture holds in the case where we replace $K_{d,d}$ with $K_{1,d}$.
This setting seems natural as it can be seen as an ``induced'' generalization of graphs of bounded maximum degree.

Korhonen showed in \cite{MR4539481} that, for graphs with bounded maximum degree, an induced variant of the Grid-Minor Theorem~\cite{MR0854606} holds for treewidth.

\begin{theorem}
There exists a function $f\colon\mathbb{N}^2\to\mathbb{N}$ such that for every positive integer $k$ and every graph $G$, if $\mathsf{tw}(G)>f(\Delta(G),k)$, then $G$ contains the $(k\times k)$-grid as an induced minor.
\end{theorem}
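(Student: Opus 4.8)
The plan is to reduce the statement to the ordinary Grid-Minor Theorem and then to upgrade an ordinary minor to an induced one by exploiting the degree bound. First I would invoke the Grid-Minor Theorem (see~\cite{MR0854606}) in the same form as in the proof of~\cref{prop:line-graphs}: if $\tw(G)$ exceeds a suitable threshold depending only on $m$, then $G$ contains a subdivision $W^*$ of the elementary $m$-wall $W_m$ as a subgraph, with $m$ growing as $\tw(G)$ grows. Contracting the subdivision paths of $W^*$ exhibits $W_m$ as a minor of $G$, and since the elementary $m$-wall contains an $\Omega(m)\times\Omega(m)$ grid as an induced minor and the induced-minor relation is transitive, it suffices to realise $W_m$ as an \emph{induced} minor for $m$ large in terms of $k$. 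Crucially, if $G[V(W^*)]$ contained no edges other than those of $W^*$, then contracting the subdivision paths would immediately give $W_m$ as an induced minor (each branch set being a connected subpath, with adjacencies matching $W_m$ exactly). So the entire task is to arrange that $V(W^*)$ induces precisely $W^*$.

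The obstruction is therefore the presence of \emph{chords}, that is, edges of $G[V(W^*)]$ outside $W^*$, which survive the contraction and spoil the grid pattern. This is exactly where the hypothesis $\Delta(G)\le\Delta$ is indispensable: every vertex of $W^*$ has at most $\Delta$ neighbours in $G$, hence at most $\Delta-2$ chord endpoints, so the chords form a graph of bounded degree and the number of vertices reachable from a fixed vertex within distance $r$ is at most $\Delta^r$. The heart of the argument is then a cleaning step: from a subdivided $m$-wall carrying only a bounded density of chords, extract a subdivided subwall on $m'$ bricks, with $m'$ still growing with $m$, whose vertex set is chordless. Feeding such a chordless subwall into the contraction above produces the desired induced wall, and hence the induced grid, so that $f(\Delta,k)$ is obtained by composing the Grid-Minor threshold with the loss incurred by cleaning.

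The cleaning step is the part I expect to be the genuine obstacle, and it is the reason the result is far from a routine corollary of the Grid-Minor Theorem. Because chords may join very distant parts of the wall, one cannot simply delete a vertex near each chord and re-wall locally: adversarially placed long-range chords can force every large subwall to carry a chord, so a one-shot global deletion (even of an independent set in the bounded-degree chord graph) need not leave a structure that is still a subwall. What the degree bound does buy is that no single vertex can be adjacent to more than $\Delta$ vertices of the wall, so a bounded number of vertices cannot spoil more than a bounded portion of it; turning this into a quantitative guarantee that a polynomially large chordless subwall survives seems to require either reformulating the problem in terms of tangles and well-linked sets, or applying the wall structure recursively, rather than deleting all spoiled regions at once. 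Making this quantitative cleaning succeed is where the real difficulty lies.
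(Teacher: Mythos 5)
This statement is not proved in the paper at all: it is quoted verbatim from Korhonen~\cite{MR4539481} in the concluding section, so there is no in-paper argument to match your attempt against. Judged on its own terms, your proposal has a genuine gap, and you have in fact identified it yourself. Everything up to the reduction is fine: the Grid-Minor Theorem gives a subdivided wall $W^*$ as a subgraph, a chordless $W^*$ would yield the wall (and hence a grid) as an induced minor by contracting the subdivision paths, and the whole content of the theorem is therefore concentrated in extracting, from a subdivided wall whose chord graph has maximum degree at most $\Delta-2$, a large subwall that is induced in $G$. But that extraction is exactly what you do not supply. Your closing paragraph correctly explains why the obvious attempts fail --- long-range chords can be placed so that deleting a vertex near each chord, or passing to any fixed grid of subwalls, destroys the wall structure faster than it removes chords --- and then stops at ``this is where the real difficulty lies.'' A proof cannot end there: the bounded-degree hypothesis alone does not obviously prevent every polynomially large subwall from carrying a chord, and no counting or Ramsey-type argument is given that would certify a chordless subwall survives. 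Naming the obstacle is not the same as overcoming it.

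For context, Korhonen's actual argument does not proceed by taking an arbitrary wall from the Grid-Minor Theorem and cleaning it afterwards; the chord-cleaning route is precisely the naive approach that had resisted attack (the statement was an open conjecture of Aboulker, Adler, Kim, Sintiari, and Trotignon before his work). So your instinct that the cleaning step is ``far from a routine corollary'' is right, but as written the proposal reduces the theorem to an unproved lemma that is essentially equivalent in difficulty to the theorem itself. To turn this into a proof you would need to either carry out the quantitative cleaning (which is where the published proof invests all of its effort, via a more structured construction of the induced model rather than post hoc repair) or cite the result, as the paper does.
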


Graphs with bounded degree have bounded clique number, which implies that, in this setting, bounded treewidth, $(\tw,\omega)$-boundedness, and bounded tree-independence number are equivalent to each other.
Hence, in the context of tree-independence number, it seems natural to conjecture a generalization of the induced variant of the Grid-Minor Theorem for graphs with bounded degree to hereditary graph classes excluding some $K_{1,d}$.
Given a hereditary graph class $\mathcal{C}$, we call the class $\widehat{\mathcal{C}}\coloneqq \{ H \mid H\text{ is an induced minor of some }G\in\mathcal{C} \}$ the \emph{induced minor closure} of $\mathcal{C}$.

\begin{conjecture}\label{conj:inducedgrid}
Let $d$ be a positive integer and $\mathcal{C}$ be a hereditary graph class excluding $K_{1,d}$.
Then $\mathcal{C}$ has bounded tree-independence number if and only if $\widehat{\mathcal{C}}$ does not contain all planar graphs.
\end{conjecture}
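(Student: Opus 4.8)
The conjecture is an equivalence whose two directions are of very different difficulty. The forward implication is the routine one. Suppose $\mathcal{C}$ has bounded tree-independence number, say $\tin(G)\le c$ for every $G\in\mathcal{C}$. By \cref{lem:tree-independence number induced-minor}, tree-independence number does not increase under induced minors, so every member of $\widehat{\mathcal{C}}$ also satisfies $\tin\le c$. It therefore suffices to exhibit a single family of planar graphs with unbounded tree-independence number, as this already prevents $\widehat{\mathcal{C}}$ from containing all planar graphs. The grids serve this purpose: for any graph of maximum degree at most $\Delta$, every bag of a tree decomposition of independence number $k$ has at most $k(\Delta+1)$ vertices (a graph on $N$ vertices of maximum degree $\Delta$ has an independent set of size at least $N/(\Delta+1)$), whence $\tin(G)\ge(\tw(G)+1)/(\Delta+1)$. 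Since the $(k\times k)$-grid has maximum degree $4$ and treewidth $k$, its tree-independence number is at least $(k+1)/5\to\infty$, completing this direction.

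The reverse implication is the hard, genuinely open part: it is an induced-minor Grid-Minor theorem for tree-independence number under star-exclusion, in the spirit of Korhonen's theorem~\cite{MR4539481} for bounded degree. I would first reduce it to a statement about grids. Since $\widehat{\mathcal{C}}$ is closed under induced minors and, by a standard construction, every planar graph occurs as an induced minor of a sufficiently large grid (route the edges as pairwise far-apart disjoint grid paths so that no unintended adjacencies arise), it is enough to prove the following: if $\mathcal{C}$ is $K_{1,d}$-free and has unbounded tree-independence number, then its members contain arbitrarily large grids as induced minors. Equivalently, bounding the size of the largest grid induced minor should bound $\tin$ over $K_{1,d}$-free graphs.

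To attack this reduction I would build on the machinery of \cref{sec:main}. By \cref{cor:main} (equivalently \cref{thm:excludelongclaw}), a $K_{1,d}$-free graph of large tree-independence number must contain, for every target $p$, a large induced subdivided claw $S_p$ or a large induced $T_p$; moreover the closed-neighborhood sparsity encoded in \cref{lemma:boundedalphapath} plays the role that bounded degree plays in the classical Korhonen setting. The plan would be to run a Korhonen-style induction on a tree decomposition (or tangle) of minimum independence number, using the separator properties of the backbone structure (\cref{lemma:backbone,cor:attachments}) to control how the components of $G-N[V(P)]$ attach, and then to route many pairwise disjoint induced paths into an honest grid realized as an induced minor.

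The main obstacle, and the reason the statement is posed as a conjecture, is precisely this final routing step: the obstructions guaranteed by \cref{cor:main} are one-dimensional (long subdivided claws and their line graphs), whereas a grid is a genuinely two-dimensional object. Converting an abundance of such linear obstructions, together with only \emph{local} (closed-neighborhood) sparsity, into a two-dimensional induced-minor grid is delicate, because high-degree vertices---permitted here, since only $K_{1,d}$ rather than bounded degree is forbidden---can create uncontrolled adjacencies that destroy the inducedness of a candidate grid. Handling these unwanted chords while preserving a large grid-like structure is where I expect the real work, and the decisive new idea beyond the present paper, to lie.
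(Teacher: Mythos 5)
The statement you are proving is \cref{conj:inducedgrid}, which the paper explicitly poses as an open conjecture: it offers no proof, only the observation (via the facts that every planar graph is a minor of a large wall~\cite{MR0854606} and that minors of $G$ become induced minors of the $1$-subdivision of $G$~\cite{DBLP:conf/innovations/Lee17}) that it is equivalent to the wall formulation in \cref{conj:inducedgrid2}. So there is no proof in the paper to compare against, and your proposal cannot be judged as a complete proof because the statement itself is open.

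That said, your forward implication is correct and self-contained: induced-minor monotonicity of $\tin$ (\cref{lem:tree-independence number induced-minor}) transfers the bound to $\widehat{\mathcal{C}}$, and the estimate $\tin(G)\ge(\tw(G)+1)/(\Delta(G)+1)$ applied to the $(k\times k)$-grid shows planar graphs of unbounded tree-independence number exist, so $\widehat{\mathcal{C}}$ cannot contain them all. Your reduction of the reverse implication to ``unbounded $\tin$ under $K_{1,d}$-freeness forces arbitrarily large grid induced minors'' is also sound, given the folklore fact that every planar graph is an induced minor of a large enough grid. The genuine gap is exactly where you locate it: \cref{cor:main} and \cref{thm:excludelongclaw} only certify one-dimensional obstructions (long induced $S_p$'s and $T_p$'s), and the backbone machinery of \cref{lemma:backbone} and \cref{cor:attachments} gives path-like, not grid-like, structure; no argument is given (here or in the literature) that upgrades these to a two-dimensional induced grid minor in the presence of high-degree vertices, which is precisely what distinguishes this from Korhonen's bounded-degree theorem~\cite{MR4539481}. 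Your writeup is honest about this, but as a proof of the stated conjecture it is necessarily incomplete.
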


It is a well known fact that every planar graph is a minor of a large enough wall \cite{MR0854606}.
Moreover, for any graph $G$ it holds that if $G$ has a graph $H$ as a minor, then the graph obtained from $G$ by subdividing every edge once contains $H$ as an induced minor \cite{DBLP:conf/innovations/Lee17}.
This implies that \cref{conj:inducedgrid} may be stated equivalently in terms of induced subgraphs.

\begin{conjecture}\label{conj:inducedgrid2}
Let $d$ be a positive integer and $\mathcal{C}$ be a hereditary graph class excluding $K_{1,d}$.
Then $\mathcal{C}$ has bounded tree-independence number if and only if there exists a positive integer $k$ such that $\mathcal{C}$ excludes all subdivisions of the elementary $k$-wall and their line graphs.
\end{conjecture}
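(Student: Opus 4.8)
The plan is to read \cref{conj:inducedgrid2} as a biconditional and attack each direction separately, using the paper's line-graph machinery for the tractable direction and reducing the remaining content to \cref{conj:inducedgrid} via the two cited facts. Write (A) for ``$\mathcal{C}$ has bounded tree-independence number'' and (B) for ``there exists $k$ such that $\mathcal{C}$ excludes all subdivisions of the elementary $k$-wall and all their line graphs''. The easy direction is (A)$\Rightarrow$(B); the sufficiency (B)$\Rightarrow$(A) is the genuinely open heart, and I would aim to show that it is equivalent to (the corresponding direction of) \cref{conj:inducedgrid}, so that the two formulations stand or fall together.

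First I would prove (A)$\Rightarrow$(B) in contrapositive form. Any subdivision $S$ of the $k$-wall $W_k$ is subcubic and has $W_k$ as a minor, so $\tw(S)\ge \tw(W_k)$ by \cref{lem:tw-minor}, and $\tw(W_k)\to\infty$. The key elementary observation is that in a graph of maximum degree $\Delta$, any set $B$ with $\alpha(G[B])\le a$ satisfies $|B|\le a(\Delta+1)$ (colour $G[B]$ with $\Delta+1$ colours), whence $\tw(G)\le(\Delta+1)\,\tin(G)-1$; applied to $S$ with $\Delta\le 3$ this forces $\tin(S)\to\infty$. For their line graphs, $L(S)$ has maximum degree at most $4$ and, by the inequality $\tw(L(S))\ge(\tw(S)+1)/2-1$ used in the proof of \cref{prop:line-graphs} (see~\cite{MR3820302}), also has unbounded treewidth, so the same bounded-degree argument gives $\tin(L(S))\to\infty$. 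Hence a class of bounded tree-independence number cannot contain arbitrarily large wall subdivisions nor their line graphs, which is exactly~(B).

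For the remaining direction I would establish the equivalence of (B) with the planar/grid condition of \cref{conj:inducedgrid}, namely that $\widehat{\mathcal{C}}$ does not contain all planar graphs. Since every planar graph is a minor of a large enough wall~\cite{MR0854606} and subdividing every edge once turns a minor into an induced minor~\cite{DBLP:conf/innovations/Lee17}, the closure $\widehat{\mathcal{C}}$ contains all planar graphs if and only if it contains $W_k$ as an induced minor for every $k$. One direction is clean: a subdivision of $W_k$ has $W_k$ as an induced minor (contract each subdivided path), and $L(S)$ contains a large wall as an induced minor by the contraction trick in the proof of \cref{thm:line}; so the negation of (B) yields arbitrarily large walls in $\widehat{\mathcal{C}}$, i.e.\ the negation of the \cref{conj:inducedgrid} condition. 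Granting the reverse translation as well, the two conjectures share both sides of the biconditional and are therefore equivalent, so it suffices to resolve \cref{conj:inducedgrid}.

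The hard part will be twofold, and both difficulties sit in (B)$\Rightarrow$(A). The structural obstacle is to prove that $K_{1,d}$-freeness together with the exclusion of all subdivisions of $W_k$ and their line graphs forces bounded $\tin$: this is an induced, degree-free analogue of Korhonen's theorem, and a proof would presumably need a decomposition showing that large tree-independence number in a $K_{1,d}$-free graph is always witnessed by one of these two unavoidable patterns. The subtler point, which I expect to be where the difficulty concentrates even in the reduction above, is the role of the line-graph alternative: an induced-minor model of $W_k$ inside some $G\in\mathcal{C}$ need not appear as a subdivision, because its branch sets may be dense, and extracting either a genuine wall subdivision or its line graph as an \emph{induced subgraph} under only the hypothesis $K_{1,d}$-free is precisely the step that the bounded-degree setting trivialises but that remains open here.
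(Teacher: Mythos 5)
The statement you were asked to prove is \cref{conj:inducedgrid2}, which the paper states as an \emph{open conjecture}: the only thing the paper claims about it is that it is an equivalent reformulation of \cref{conj:inducedgrid}, justified by the two cited facts (every planar graph is a minor of a large wall, and the $1$-subdivision turns minor containment into induced minor containment). There is therefore no proof in the paper to compare against, and your proposal correctly recognizes this: you do not claim to resolve the conjecture, you prove the easy direction, and you reduce the rest to the open core.

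What you do prove is sound and is slightly more than the paper makes explicit. Your necessity argument --- bounded $\tin$ excludes large wall subdivisions and their line graphs --- is correct: subdivisions of $W_k$ are subcubic with treewidth at least $\tw(W_k)$, the bound $\tw(G)\le(\Delta+1)\tin(G)-1$ follows from greedy colouring of each bag, and the line graphs have maximum degree at most $4$ and treewidth at least $(\tw(S)+1)/2-1$ by the inequality the paper already invokes in the proof of \cref{prop:line-graphs}. Your sketch of the equivalence with \cref{conj:inducedgrid} matches the paper's one-sentence justification, and you correctly flag the one nontrivial step in that translation (recovering an \emph{induced} wall subdivision or its line graph from an induced-minor model of a wall in a $K_{1,d}$-free graph) as exactly where the difficulty concentrates. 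In short: no error, and an accurate identification of what remains open; a complete proof is not available to you or to the paper.
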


Returning to our discussion about hereditary graph classes defined by excluding a finite set of forbidden graphs, recall that a special case of \cref{conjecture:finitely-many-fis} is \cref{conjecture:excluding-a-path}, stating that for any two positive integers $d$ and $s$, the class of $\{K_{d,d},P_s\}$-free graphs has bounded tree-independence number.
We observed that \cref{conjecture:excluding-a-path} holds for every $d$ when excluding $K_{d,d}$ and $P_4$, and improved an exponential upper bound on the tree-independence number of $\{K_{d,d},P_4\}$-free graphs that follows from results in the literature to a sharp linear upper bound, obtaining along the way a linear-time algorithm to compute the tree-independence number of a $P_4$-free graph.
In addition, \cref{thm:excludepathoriginal} proves \cref{conjecture:excluding-a-path} for every $d$ and $s$ when excluding $K_{1,d}$ and $P_s$.
A natural next step would be to approach the following further weakening of \cref{conjecture:excluding-a-path}.

\begin{conjecture}\label{conj:K2dPs}
For any two positive integers $d$ and $s$, the class of $\{ K_{2,d},P_s\}$-free graphs has bounded tree-independence number.
\end{conjecture}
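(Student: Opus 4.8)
The plan is to prove \cref{conj:K2dPs} by adapting the overall architecture of the proof of \cref{thm:main} (isolate a richly connected region of bounded independence number, delete it, recurse on the pieces, and glue the resulting tree decompositions via \cref{lem:an-easy-lemma}), while replacing every tool that relies on $K_{1,d}$-freeness. First I would reduce to the case that $G$ is connected. The case $s\le 4$ is then immediate: since $K_{d,d}$ contains an induced $K_{2,d}$ for every $d\ge 2$, every $K_{2,d}$-free graph is $K_{d,d}$-free, so a $\{K_{2,d},P_4\}$-free graph is $\{K_{d,d},P_4\}$-free and \cref{tin-of-P4-tree-graphs-original} yields $\tin(G)\le d-1$. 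For general $s$ the two structural facts I would rely on are: (a) for any two non-adjacent vertices $u,v$ we have $\alpha(G[N(u)\cap N(v)])\le d-1$, since otherwise $u$, $v$, and an independent set of size $d$ inside $N(u)\cap N(v)$ would induce a $K_{2,d}$; and (b) every connected $P_s$-free graph has diameter at most $s-2$, because a shortest path between two vertices is induced and hence has fewer than $s$ vertices.

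The difficulty, and the reason one cannot simply mimic \cref{sec:main}, is that all three workhorse lemmas there (\cref{lemma:fewneighbors,lem:longsegment,lemma:boundedalphapath}) \emph{fail} for $K_{2,d}$-free graphs: a single vertex may be adjacent to arbitrarily many pairwise non-adjacent vertices, so the closed neighborhood of an induced path can have unbounded independence number and is useless as a bag. Consequently the decomposition must be assembled from separators whose independence number is bounded by a function of $d$ and $s$ directly. The natural candidate is a layered decomposition: fix a root $r$ and its breadth-first layers $L_0,\dots,L_D$, where $D\le s-2$ by~(b), and separate along layers. Splitting repeatedly at a middle layer controls the recursion depth by a function of $s$ alone (depth $O(\log s)$), and this is essential: the usual balanced-separator recursion would only guarantee a bound of the form $O(k\log n)$ in the separator independence number $k$, whereas we need a bound independent of $n$.

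The heart of the argument, and the step I expect to be the main obstacle, is therefore to produce separators that are \emph{simultaneously} of bounded independence number and depth-reducing; for $K_{1,d}$-free graphs closed neighborhoods achieve both, but here a layer $L_i$ typically has unbounded $\alpha$. The idea I would pursue to resolve this tension is to exploit the ``privacy'' consequence of~(a): if $v$ has a large independent set $I\subseteq N(v)$, then every vertex $u$ non-adjacent to $v$ satisfies $|N(u)\cap I|\le d-1$, so $I$ interacts with the rest of the graph only through $v$ and through $v$'s neighbors. This suggests replacing a full layer by a bounded-$\alpha$ \emph{transversal} together with a few high-degree centers, pushing each private independent set into a single localized part of the decomposition rather than spreading it across many bags. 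Making this precise --- certifying that such a bounded-independence-number core still separates the graph while the private remainders each attach to a single bag --- is exactly where genuinely new ideas are needed, and is precisely the feature that distinguishes $K_{2,d}$-freeness from the $K_{1,d}$-freeness handled in this paper. I would first test the construction in the case $d=2$, where $K_{2,2}=C_4$ and fact~(a) says that any two non-adjacent vertices have at most one common neighbor, before attempting the bound for general $d$.
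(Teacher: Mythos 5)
This statement is \cref{conj:K2dPs}, which the paper itself leaves as an open conjecture: the authors only record that it holds for $d=1$ or $s\le 4$ (via \cref{tin-of-P4-tree-graphs-original}, exactly the reduction you give through $K_{2,d}\subseteq_{\mathrm{ind}} K_{d,d}$) and for $d=2$, $s=5$ via an ad hoc structural analysis of $\{P_5,C_4\}$-free graphs. So there is no proof in the paper to compare against, and your proposal does not supply one either: you state explicitly that making the ``bounded-$\alpha$ transversal plus centers'' separator precise ``is exactly where genuinely new ideas are needed.'' That is the gap, and it is the whole problem. Everything before it --- the reduction of $s\le4$ to \cref{tin-of-P4-tree-graphs-original}, fact~(a) on common neighborhoods, fact~(b) on diameter, and the correct diagnosis that \cref{lemma:fewneighbors,lem:longsegment,lemma:boundedalphapath} all fail without $K_{1,d}$-freeness --- is accurate but is preamble; no separator of bounded independence number is ever exhibited, so no tree decomposition is ever constructed for $s\ge 5$.

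Two further cautions about the proposed route. First, the claim that splitting at a middle BFS layer bounds the recursion depth by $O(\log s)$ does not hold up: after deleting a layer, each resulting component is again a connected $P_s$-free graph and hence again has diameter up to $s-2$ from a new root, so the diameter is not a progress measure and the recursion has no evident termination argument. Second, fact~(a) controls $\alpha(N(u)\cap N(v))$ only for \emph{non-adjacent} $u,v$; a single BFS layer $L_i$ can contain many vertices whose neighborhoods into $L_{i+1}$ pairwise intersect in large independent sets routed through adjacent centers, so the ``privacy'' argument as stated does not yet localize the large independent sets to single bags. In short, the proposal is a reasonable research program, not a proof, and the statement remains open in the paper as well.
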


\cref{tin-of-P4-tree-graphs-original} implies that \cref{conj:K2dPs} holds for $d = 1$ or $s\le 4$.
It also holds for $d = 2$ and $s = 5$. 
In this case we are dealing with $\{P_5,C_4\}$-free graphs and it can be shown that every such graph $G$ has tree-independence number at most~$2$.
Indeed, if $G$ is not chordal, then $G$ contains an induced $5$-cycle, and analyzing the possible ways in which the neighbors of a fixed $5$-cycle connect to the cycle, a structural characterization of $\{P_5,C_4\}$-free graphs can be obtained, which implies the existence of a tree decomposition with independence number $2$.

Finally, recall that we established the validity 
of \cref{conj:main} for subclasses of the class of line graphs.
The inequalities relating the treewidth of a graph with the tree-independence number of either the graph or its line graph (cf.\ \cref{thm:line,thm:tin-tw}) motivate the question of whether the tree-independence cannot decrease when taking the line graph.

\begin{question}
Is $\tin(L(G))\ge \tin(G)$ if $G$ is not edgeless?
\end{question}

\paragraph{Acknowledgements.}

This work is supported in part by the Slovenian Research and Innovation Agency (I0-0035, research programs P1-0285 and P1-0383, research projects J1-3001, J1-3002, J1-3003, J1-4008, J1-4084, and N1-0102), and by the research program CogniCom (0013103) at the University of Primorska, by the National Research Foundation of Korea (NRF) grant funded by the Ministry of Science and ICT (No. NRF-2021K2A9A2A11101617 and RS-2023-00211670), and the Institute for Basic Science (IBS-R029-C1).

\bibliographystyle{abbrv}
\bibliography{biblio}

\end{document}